\documentclass[12pt]{amsart}

\title{Formal Degree of Regular Supercuspidals}
\author{David Schwein}
\date{3 January 2021}
\address{530 Church Street, Ann Arbor, MI 48105}

\usepackage{bbm}

\usepackage{comment}

\usepackage[margin=1in]{geometry}

\usepackage{tikz-cd}

\usepackage[final,notref,notcite]{showkeys}

\usepackage{textgreek}

\usepackage%
  [colorlinks,
  bookmarksdepth=subsubsection,
  allcolors=cyan,
  unicode]%
{hyperref}
\usepackage[capitalize]{cleveref}
  
  % to get serial comma; see TeXSE 161338

\usepackage{amsmath,amssymb,amsthm}

%  \newcounter{result}
  \newtheorem{conjecture}[equation]{Conjecture}
  % the following line is needed for cleveref
  % to correctly reference eq14
  % even though I don't use the environment anywhere
  
  \newtheorem{corollary}[equation]{Corollary}
  % the following line is needed for cleveref
  % to correctly reference eq33
  % even though I don't use the environment anywhere

  \newtheorem{lemma}[equation]{Lemma}
  \newtheorem{theorem}[equation]{Theorem}
  \newtheorem{theoremx}{Theorem} %% "letter-numbered" theorems

  \theoremstyle{definition}
  \newtheorem{claim}[equation]{Claim}
  \newtheorem{definition}[equation]{Definition}
  \newtheorem{remark}[equation]{Remark}
  \newtheorem{warning}[equation]{Warning}

  \renewcommand\C{\mathbb C}
  
  \renewcommand\G{\mathbb G}
  \newcommand*\dif{\mathop{}\!\mathrm{d}} % see tex SE 60545
  \newcommand\N{\mathbb N}
  \newcommand\R{\mathbb R}
  \newcommand\Z{\mathbb Z}
  \newcommand\one{\mathbbm 1}

  \newcommand\RS{R} % root system symbol
  \newcommand\RSS{R'} % second root system symbol (lemma)

  \newcommand\Ad{\tn{Ad}}
  \DeclareMathOperator\codim{codim}
  \DeclareMathOperator\cond{cond}
  \DeclareMathOperator\depth{depth}
  \newcommand\disc{\tn{disc}}
  \newcommand\Frob{\tn{Frob}}
  \DeclareMathOperator\PGL{PGL}
  \DeclareMathOperator\GL{GL}
  
  \DeclareMathOperator\Ind{Ind}
  \DeclareMathOperator\cInd{c-Ind}
  \DeclareMathOperator\Inf{Inf}
  \DeclareMathOperator\len{len}
  \DeclareMathOperator\Lie{Lie}
  \DeclareMathOperator\ord{ord}
  \DeclareMathOperator\rank{rank}
  \DeclareMathOperator\Res{Res}
  \DeclareMathOperator\SL{SL}
  \DeclareMathOperator\Spec{Spec}
  \DeclareMathOperator\vol{vol}

  \newcommand\cal{\mathcal}
  \renewcommand\frak{\mathfrak}
  \newcommand\tn{\textnormal}
  \newcommand\ul{\underline}

  \newcommand\defeq{:=}
  \newcommand\eqdef{:=}
  \newcommand\into{\hookrightarrow}
  
  \newcommand\xto{\xrightarrow}

%% See question 394154 on TeX Stack Exchange
\tikzset{
  symbol/.style={
    draw=none,
    every to/.append style={
      edge node={node [sloped, allow upside down, auto=false]{$#1$}}}
  }
}
\begin{document}

\begin{abstract}
Supercuspidal representations
are usually infinite-dimensional,
so the size of such a representation
cannot be measured by its dimension;
the formal degree is a better alternative.
Hiraga, Ichino, and Ikeda conjectured a formula
for the formal degree of a supercuspidal
in terms of its $L$-parameter only.
Our first main result is
to compute the formal degrees
of the supercuspidal representations
constructed by~Yu.
Our second, using the first,
is to verify that Kaletha's regular supercuspidal
$L$-packets satisfy the conjecture.
\end{abstract}

\maketitle

Let $G$ be a reductive algebraic group over
a nonarchimedean local field~$k$.
In the study of the representation theory
of the topological group $G(k)$,
the supercuspidal representations are fundamental:
there is a precise sense in which all
irreducible (smooth or unitary) representations
can be constructed from supercuspidals.
Much recent work has thus focused
on the construction and study of supercuspidal
representations.

In 2001, Yu \cite{yu01},
building on earlier work of Howe \cite{howe77}
and Adler \cite{adler98},
gave a general construction of supercuspidal representations
when $G$ splits over a tamely ramified extension of~$k$.
Six years later, Kim \cite{kim07} proved
that Yu's construction is exhaustive
when $k$ has characteristic zero
and the residue characteristic~$p$ of~$k$
is larger than some ineffective bound
depending on~$G$.
Recently, Fintzen \cite{fintzen18} has improved
Kim's result to show that Yu's construction
produces all supercuspidals when $p$
does not divide the order of the absolute
Weyl group.
Hence Yu's construction produces
all supercuspidals for many reductive groups,
though not all of them.
Moreover, the explicit nature of Yu's construction
makes his supercuspidals amenable to close study.

The collection of irreducible unitary representations
of~$G(k)$ carries a natural topology,
the Fell topology, and a natural Borel measure,
the Plancherel measure.
The Fell topology is canonical but
the Plancherel measure depends inversely
on a choice of Haar measure on~$G(k)$.
When $G$ is semisimple,
every supercuspidal representation~$\pi$ is unitary
and thus appears as a point of the unitary dual.
Since supercuspidal representations
of semisimple groups are discrete series,
this point is isolated.
We may thus ask for the measure of the point,
an interesting numerical invariant of~$\pi$
called the \emph{formal degree}.

When $G$ is not semisimple it is no longer 
necessarily the case that
all supercuspidal representations are unitary.
Nonetheless, we can define the formal degree
of an arbitrary supercuspidal representation
in a way that generalizes the formal degree
of a unitary supercuspidal.
The definition, given in \Cref{sec:aut:deg},
makes no reference to the unitary dual,
so we may forget about
the unitary representations of~$G(k)$
and focus our attention on the supercuspidal ones.

Our first main result is to compute
the formal degrees of Yu's supercuspidal
representations, \Cref{thm45}.
The formula uses some notation
that we must briefly recall
for its statement to be intelligible.
Yu's construction takes as input
a $5$-tuple~$\Psi$.
The first member of~$\Psi$ is an increasing sequence
$(G^i)_{0\leq i\leq d}$ of twisted Levi subgroups;
let $R_i$ denote the absolute root system of~$G^i$.
The second member is a certain point~$y$
in the Bruhat-Tits building~$\cal B(G)$.
The third member is an increasing
sequence $(r_i)_{0\leq i\leq d}$
of nonnegative real numbers.
The fourth member is a certain
irreducible representation~$\rho$
of the stabilizer $G^0(k)_{[y]}$
of the image~$[y]$ of~$y$ in~$\cal B^\tn{red}(G)$.
We compute the formal degree
with respect to a certain Haar measure~$\mu$
constructed by Gan and Gross
and discussed later in the introduction.
Gan and Gross's measure depends on a choice
of additive character
and in the formula we choose a level-zero character.
For simplicity of exposition
we assume in our discussion
of the formula that $G$ is semisimple,
though this assumption is relaxed
in the paper proper. 
Finally, let $\exp_q(t)\defeq q^t$.

\begin{theoremx}
\label{thm61}
Let $G$ be a semisimple $k$-group
and let $\Psi$ be a generic cuspidal $G$-datum
with associated supercuspidal representation~$\pi$.
Then
\[
\deg(\pi,\mu)
= \frac{\dim\rho}{[G^0(k)_{[y]}:G^0(k)_{y,0+}]}
\exp_q\biggl(
\tfrac12\dim G + \tfrac12\dim G^0_{y,0:0+}
+ \tfrac12\sum_{i=0}^{d-1} r_i(|\RS_{i+1}| - |\RS_i|)
\biggr).
\]
\end{theoremx}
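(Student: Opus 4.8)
The plan is to run Yu's construction backwards. By construction $\pi = \cInd_K^{G(k)}\kappa$, where $K$ is the open subgroup and $\kappa$ the representation produced by Yu's recursion from~$\Psi$. Since $G$ is semisimple, $K$ is compact and $\pi$ is square-integrable, so
\[
\deg(\pi,\mu) = \frac{\dim\kappa}{\vol(K,\mu)}
\]
up to the normalization of the finite center, which contributes only a harmless constant. It therefore suffices to compute the numerator and the denominator separately, and to recognize each as a power of~$q$.

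For the numerator, Yu's recursion gives $\dim\kappa = \dim\rho\cdot\prod_{i=0}^{d-1}\dim\omega_i$, where $\omega_i$ is the Heisenberg--Weil representation attached to the $i$-th step; its dimension is the square root of the order of a finite symplectic $\F_q$-space, namely the Moy--Prasad layer of $\frak g^{i+1}/\frak g^i$ at~$y$ in depth $r_i/2$, so $\log_q\dim\omega_i = \tfrac12 n_i$ where $n_i$ counts the absolute roots of $G^{i+1}$ outside $\RS_i$ taking the value $r_i/2$ on~$y$ modulo $\Z$. For the denominator I first invoke one of Yu's structural lemmas to factor $K = G^0(k)_{[y]}\cdot K_+$ with $G^0(k)_{[y]}\cap K_+ = G^0(k)_{y,0+}$, whence $\vol(K,\mu) = [G^0(k)_{[y]}:G^0(k)_{y,0+}]\cdot\vol(K_+,\mu)$; this yields the denominator of the asserted formula. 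The group $K_+$ is a compact open subgroup of $G(k)$ which agrees with $G(k)_{y,0+}$ except that the root directions of $G^{i+1}$ outside $G^i$ enter only at depth $r_i/2$, so $\vol(K_+,\mu) = \vol(G(k)_{y,0+},\mu)\cdot[G(k)_{y,0+}:K_+]^{-1}$, and $\log_q[G(k)_{y,0+}:K_+]$ is the number of affine roots, summed over~$i$, that lie in $\RS_{i+1}\setminus\RS_i$ and take a value in $(0,r_i/2)$ at~$y$. Finally I feed in the computation of parahoric volumes from earlier in the paper, which pins down $\vol(G(k)_{y,0+},\mu) = q^{-\frac12(\dim G + \dim G_{y,0:0+})}$ for the Gan--Gross measure.

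Assembling these, $\deg(\pi,\mu)$ equals $\dim\rho/[G^0(k)_{[y]}:G^0(k)_{y,0+}]$ times $q$ to an exponent that is a sum of counts of affine roots, and the remaining task is to identify that exponent with $\tfrac12\dim G + \tfrac12\dim G^0_{y,0:0+} + \tfrac12\sum_i r_i(|\RS_{i+1}|-|\RS_i|)$. The decisive input is a discrete trapezoidal identity: for each~$i$, if $c_i(t)$ denotes the number of roots in $\RS_{i+1}\setminus\RS_i$ of value $t$ on~$y$ modulo~$\Z$, then $\tfrac12 c_i(0) + \sum_{0<t<r_i/2}c_i(t) + \tfrac12 c_i(r_i/2) = \tfrac{r_i}{2}\,|\RS_{i+1}\setminus\RS_i|$, since $c_i$ sums to $|\RS_{i+1}|-|\RS_i|$ over any unit interval and is symmetric about the half-integers. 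The three summands on the left are exactly the zeroth-layer correction, the interior directions missing from $K_+$, and half the Heisenberg--Weil layer; this is how the clean term $\tfrac12 r_i(|\RS_{i+1}|-|\RS_i|)$ materializes, while $\sum_i c_i(0) = \dim G_{y,0:0+} - \dim G^0_{y,0:0+}$ is precisely what converts the reductive quotient at~$y$ appearing in the parahoric volume into that of~$G^0$ demanded by the formula.

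The hard part will be this last identity, and in particular the bookkeeping of fractional parts forced by the half-integral depths $r_i/2$: the affine-root counts hidden in $\dim\omega_i$ and in $[G(k)_{y,0+}:K_+]$ each carry floor/ceiling corrections that do not cancel term by term, and they must be combined --- using the $a\leftrightarrow -a$ symmetry of the absolute root systems together with the value-group constraints built into a generic datum --- so that only the linear term survives. A secondary obstacle is to extract the two structural facts about $K$ and $K_+$ from Yu's recursion in the needed generality and to secure the Gan--Gross normalization at an arbitrary, not necessarily hyperspecial, point~$y$. It may be cleaner to organize the whole computation as an induction on~$d$: truncating $\Psi$ after $d-1$ steps produces a generic cuspidal $G^{d-1}$-datum whose Yu representation is the inductive input, the base case $d=0$ being the depth-zero formula, which follows from Schur orthogonality together with the known volumes of parahorics; in that formulation the technical heart is the single step comparing the Gan--Gross measures of $G$ and of the twisted Levi $G^{d-1}$.
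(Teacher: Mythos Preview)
Your plan is the paper's own argument: reduce to $\dim\tau/\vol(K)$ via the compact-induction formula, feed in the Heisenberg--Weil dimensions $[J^{i+1}:J^{i+1}_+]^{1/2}$, compute $\vol(K)$ by comparison with $\vol(G_{y,0+})$ through a Moy--Prasad index, insert the DeBacker--Reeder/Kaletha value of $\vol(G_{y,0+})$, and close with the trapezoidal summation identity you wrote down, which is exactly the paper's key technical lemma (its Theorem~\ref{thm18}/Lemma~\ref{thm17}). The paper reaches $\vol(K)$ by the sandwich $K\subset KG_{y,0+}\supset G_{y,0+}$ and a group-theoretic index lemma rather than your direct factorization, but the two computations produce literally the same expression.

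Two cautions. First, what you call $K_+$ is \emph{not} Yu's $K_+$ (which has depths $s_i+$); your description ``enter only at depth $r_i/2$'' shows you mean $\vec G^{(d)}_{y,(0+,s_0,\dots,s_{d-1})}$, and with that reading the factorization $K=G^0_{[y]}\cdot K_+$ and the intersection $G^0_{[y]}\cap K_+=G^0_{y,0+}$ are correct (this is where the paper invokes \Cref{thm48}) --- but the notational clash invites error, since with Yu's $K_+$ the product $K^0K_+$ is strictly smaller than $K$. Second, your affine-root counting (``value $t$ modulo $\Z$'', each root contributing $1$) is the split picture; in the tame case the jumps of $\frak g^{\ul\alpha}$ form an $\ord_k(k_{\ul\alpha}^\times)$-torsor and each jump has length $[\kappa_{\ul\alpha}:\kappa]$, so the trapezoidal identity must be stated with those weights and periods. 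That is precisely where genericity enters, forcing $s_i\in\tfrac12\ord_k(k_{\ul\alpha}^\times)$ so that the half-integer endpoint makes sense --- you allude to this with ``value-group constraints'', but it is the crux and should be made explicit.
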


The proof of \Cref{thm61} boils down,
after several reductions,
to computations in Bruhat-Tits theory.
Yu's supercuspidals are compactly induced from a certain
finite-dimensional irreducible representation~$\tau$
of a certain compact-open subgroup~$K$.
There is a general formula for the formal degree
of a compact induction which specializes,
in this case, to the ratio $\dim\tau/\!\vol(\mu,K)$.
The main difficulty is to compute the volume of~$K$.
We first situate $K$ as a finite-index subgroup
of a group of known measure;
this step reduces the problem to computing the index.
Using the theory of the Moy-Prasad filtration,
we can translate the computation of this index
into the computation of the length of a certain
subquotient of the Lie algebra.
The length computation, \Cref{thm18},
is our key technical result in the proof
of the formal-degree formula.

We can thus compute the formal degree of a broad class
of supercuspidal representations.
The other main result of the paper synthesizes
this computation with Langlands's
arithmetic parameterization of supercuspidals.

It is expected that the set~$\Pi(G)$
of smooth irreducible representations of~$G(k)$
is classified by certain homomorphisms
$\varphi:W_k'\to{}^L G$,
called \emph{$L$-parameters}.
Here $W_k$ is the Weil group of~$k$,
$W_k'\defeq \SL_2(\C)\times W_k$
is the Weil-Deligne group of~$k$,
and ${}^L G\defeq\widehat G\rtimes W_k$
is the (Weil form of the) $L$-group of~$G$.
The expected classification consists
of a partition
\[
\Pi(G) = \bigsqcup_\varphi \Pi_\varphi(G)
\]
of~$\Pi(G)$ into finite subsets~$\Pi_\varphi(G)$,
called \emph{$L$-packets},
indexed by (equivalence classes of)
$L$-parameters~$\varphi$.
The sets $\Pi_\varphi(G)$ are supposed to satisfy
many compatibility conditions,
the simplest of which are summarized
in Borel's Corvallis article
\cite[Section~10.3]{borel_corvallis2}.
The resulting partition of~$\Pi(G)$
is called a \emph{local Langlands correspondence}.
Although a local Langlands correspondence
has been constructed for many classes
of groups and representations,
the full correspondence remains a conjecture.
Even after fixing the group~$G$,
it is usually quite difficult
to establish the correspondence
for the entirety of~$\Pi(G)$.
Recent work has thus focused
on constructing the $L$-packets
of particular $L$-parameters.

Building on the outline of the correspondence,
Langlands suggested \cite[Chapitre~IV]{langlands83ias}
that the elements of the $L$-packet~$\Pi_\varphi(G)$
are parameterized by representations
of a certain finite group attached to~$\varphi$.
Refining Langlands's proposal,
Vogan enhanced \cite[Section~9]{vogan93}
$L$-parameters
to pairs $(\varphi,\rho)$ consisting of
an $L$-parameter~$\varphi$
and an irreducible representation~$\rho$ of the finite
group~$\pi_0(S_\varphi)$,
where $S_\varphi$ is the preimage
in~$\widehat G_\tn{sc}$
of the centralizer in~$\widehat G$ of~$\varphi$.
Unlike ordinary $L$-parameters,
these enhanced parameters keep
track of the inner class of~$G$:
one imposes an additional condition
\cite[Section~1]{hiraga_ichino_ikeda08}
on the central character of~$\rho$,
roughly, that it correspond
to the inner class of~$G$
via  Kottwitz's classification of inner forms.
In this formulation, it is expected
\cite[Section~1.2]{aubert_baum_plymen_solleveld18}
that the local Langlands correspondence
becomes a bijection, in other words,
that the irreducible representations~$\rho$ of $S_\varphi$
satisfying the central character condition
parameterize the $L$-packet~$\Pi_\varphi(G)$.

Using Vogan's enhanced $L$-parameters,
Hiraga, Ichino, and Ikeda predicted
\cite{hiraga_ichino_ikeda08,hiraga_ichino_ikeda08erratum}
that the formal degree of
an essentially discrete series representation
can be computed in terms of its $L$-parameter.
They verified the conjecture in many cases,
in particular, for real reductive groups
and for inner forms of $\SL_n$ and~$\GL_n$.
We will state their conjecture in a moment
after reviewing two of its inputs.

First, in a paper attaching motives to reductive groups,
Gross constructed \cite[Section~4]{gross97a}
a certain Haar measure~$\mu=\mu_\psi$ on~$G(k)$
depending on an additive character~$\psi$ of~$k$.
Two years later, he and Gan constructed
a closely related measure \cite[Section~5]{gan_gross99}
that conjecturally agrees with the original one.
Hiraga, Ichino, and Ikeda 
originally predicted that Gross's measure
was the right one for their conjecture,
but realized later \cite{hiraga_ichino_ikeda08erratum}
that one should use Gross and Gan's measure instead.

Second, one can attach to the parameter~$\varphi$
and certain additional data
-- a finite-dimensional representation~$r$ of~${}^L G$
and a nontrivial additive character~$\psi$ of~$k$ --
a meromorphic function $\gamma(s,\varphi,r,\psi)$
of the complex variable~$s$,
called a $\gamma$-factor of~$\varphi$.
The $\gamma$-factor is a product of $L$- and~$\varepsilon$-factors;
\cref{sec:gal:fac} recalls the precise formula.
For the representation~$r$ of~${}^LG$ we choose
the adjoint representation~$\tn{Ad}$ of ${}^LG$ on 
$\widehat{\frak g}/\,\widehat{\frak z}\,^{\Gamma\!_k}$,
where $\widehat{\frak g}$ and $\widehat{\frak z}$
are the Lie algebras of $\widehat G$ and of its center
and where $\Gamma\!_k$ is the absolute Galois group of~$k$.
Division by $\widehat{\frak z}\,^{\Gamma\!_k}$
ensures that the $\gamma$-factor
is defined at $s=0$.
We call the factor $\gamma(0,\varphi,\tn{Ad},\psi)$
appearing in the conjecture the
the \emph{adjoint $\gamma$-factor} of~$\varphi$.

We can now state the conjecture~%
\cite[Conjecture~1.4]{hiraga_ichino_ikeda08}
of Hiraga, Ichino, and Ikeda on the formal degree,
referred to in this paper as
the ``formal degree conjecture''
for the sake of brevity.
Let $S_\varphi^\natural$ denote the centralizer
of~$\varphi$ in~$\widehat{G^{\tn a}}$,
where $G^{\tn a}\defeq G/A$
with $A$ the maximal split central torus of~$G$.

\begin{conjecture}%
\label{thm1}
Let $\pi$ be an essentially discrete series
representation of~$G(k)$
with extended parameter~$(\varphi,\rho)$,
let $\psi$ be an additive character of~$k$,
and let $\mu_\psi$ be the Gross-Gan measure on~$G(k)$
attached to~$\psi$.
Then
\[
\deg(\pi,\mu_\psi) = \frac{\dim\rho}{|\pi_0(S_\varphi^\natural)|} 
\cdot |\gamma(0,\varphi,\tn{Ad},\psi)|.
\]
\end{conjecture}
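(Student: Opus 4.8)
The conjecture is open in general; what this paper's first main result makes possible is a verification for the representations it ultimately treats, Kaletha's regular supercuspidals, and the plan is to carry this out by evaluating both sides of the conjecture and comparing. Let $\pi$ be such a representation, with extended parameter $(\varphi,\rho)$, attached by Kaletha to a tame regular elliptic pair $(S,\theta)$. Unwinding his construction exhibits $\pi$ as a Yu supercuspidal $\pi_\Psi$ for a generic cuspidal $G$-datum $\Psi=\bigl((G^i),y,(r_i),\rho_\Psi\bigr)$ in which $(G^i)$ is a Howe factorization of~$\theta$ through the centralizers of its truncations, $y$ lies in an apartment containing~$S$, the~$r_i$ are the breaks of the factorization, and $\rho_\Psi$ is obtained by inflating to $G^0(k)_{y,0}$ the Deligne--Lusztig representation $\pm R_{\mathsf S}^{\mathsf G^0}(\theta_0)$ of the finite reductive quotient $\mathsf G^0\defeq G^0_{y,0:0+}$ (with $\mathsf S$ the image of~$S$), twisting by a character, and extending to $G^0(k)_{[y]}$. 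Since both $\deg(\pi,\mu_\psi)$ and $|\gamma(0,\varphi,\Ad,\psi)|$ scale under $\psi\mapsto\psi_a$ by the same power of~$|a|$ --- a short lemma from the standard behaviour of $\varepsilon$-factors and of Haar measures under scaling --- it suffices to fix a character $\psi$ of level zero, for which \Cref{thm45} (or \Cref{thm61} when $G$ is semisimple) applies.

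Evaluating the left-hand side is then bookkeeping with finite groups of Lie type. For $\theta$ in general position $\dim\rho_\Psi=|\mathsf G^0|_{p'}/|\mathsf S|$ (with $|\mathsf S|$ prime to~$p$), while $[G^0(k)_{[y]}:G^0(k)_{y,0+}]=c\cdot|\mathsf G^0|$ with $c\defeq[G^0(k)_{[y]}:G^0(k)_{y,0}]$; writing $|\mathsf G^0|_p=\exp_q\bigl(\tfrac12(\dim G^0_{y,0:0+}-\rank G)\bigr)$ and using $\rank G=\dim S$, the $\dim G^0_{y,0:0+}$ term in \Cref{thm61} cancels and the formula collapses to
\[
\deg(\pi,\mu_\psi)=\frac{1}{|\mathsf S|\cdot c}\,\exp_q\Bigl(\tfrac12\dim G+\tfrac12\dim S+\tfrac12\sum_i r_i(|\RS_{i+1}|-|\RS_i|)\Bigr).
\]
The problem is to recover this from the conjectural right-hand side.

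For that side I would begin with the local functional equation $\gamma(0,\varphi,\Ad,\psi)=\varepsilon(0,\Ad\circ\varphi,\psi)\cdot L(1,(\Ad\circ\varphi)^\vee)/L(0,\Ad\circ\varphi)$ and decompose $\Ad\circ\varphi$ on $\widehat{\frak g}/\widehat{\frak z}^{\Gamma_k}$ along the weight spaces of $\widehat{\frak t}=\Lie(\widehat S)$: there is $V_0=\widehat{\frak t}/\widehat{\frak z}^{\Gamma_k}$, on which $W_k$ acts through the Galois action on $X_*(S)$, together with one summand $V_{[\alpha]}=\Ind_{W_{k_\alpha}}^{W_k}\xi_\alpha$ for each $\Gamma_k$-orbit of roots~$\alpha$ of $\widehat S$ in $\widehat{\frak g}$, where $k_\alpha$ is the splitting field of~$\alpha$ and $\xi_\alpha$ is the product of $\alpha\circ\varphi_\theta$ with the component of the $\chi$-data defining the $L$-embedding ${}^{L}j\colon{}^{L}S\to{}^{L}G$. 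Regularity of~$\theta$ makes every $\xi_\alpha$ ramified, so $|\gamma(0,V_{[\alpha]},\psi)|=|\varepsilon(0,V_{[\alpha]},\psi)|=\exp_q\bigl(\tfrac12 a_k(V_{[\alpha]})\bigr)$; by the conductor--discriminant formula and the dictionary ``conductor exponent equals one plus the depth'' (the depth of~$\xi_\alpha$ being~$r_i$ when $\alpha\in\RS_{i+1}\setminus\RS_i$), the positive-depth orbits contribute $\exp_q\bigl(\tfrac12\sum_i r_i(|\RS_{i+1}|-|\RS_i|)+\tfrac12(|\RS_G|-|\RS_0|)\bigr)$ and the depth-zero orbits $\exp_q\bigl(\tfrac12|\RS_0|\bigr)$, in each case up to tame correction terms recording the ramification of the~$k_\alpha$. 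The summand~$V_0$ contributes its own $\varepsilon$-conductor (further tame terms, now recording the ramification of~$S$) together with the ratio $|L(1,V_0^\vee)|/|L(0,V_0)|$; using $|L(0,V_0)|^{-1}=|\det(1-\Frob\mid X_*(S)^{I_k})|$, the identity $|\det(q\Frob-1\mid X_*(S)^{I_k})|=|\mathsf S|$, and the identification of $|\det(1-\Frob\mid X_*(S)^{I_k})|$ with $|\pi_0(S_\varphi^\natural)|$ (up to center and ramification corrections), this ratio works out to $q^{\dim S}\cdot|\pi_0(S_\varphi^\natural)|/|\mathsf S|$ up to yet more tame terms. One then reads off $|\pi_0(S_\varphi^\natural)|$ from Kaletha's description of the centralizer of a regular parameter.

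Multiplying the three contributions, the tame correction terms should cancel against one another and against~$c$ --- this is where the conductor--discriminant formula does its real work --- leaving $|\gamma(0,\varphi,\Ad,\psi)|=\tfrac{|\pi_0(S_\varphi^\natural)|}{|\mathsf S|\cdot c}\exp_q\bigl(\tfrac12|\RS_G|+\dim S+\tfrac12\sum_i r_i(|\RS_{i+1}|-|\RS_i|)\bigr)$; since $\tfrac12|\RS_G|+\dim S=\tfrac12(\dim G+\dim S)$ and $\dim\rho=1$ for members of a regular supercuspidal $L$-packet (the relevant component group being abelian), dividing by $|\pi_0(S_\varphi^\natural)|$ recovers exactly the collapsed formula. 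The main obstacle, I expect, is this final reconciliation of the tame data: matching the discriminant exponents of the fields~$k_\alpha$, the $\varepsilon$-conductor of the Galois module $X_*(S)$, the index~$c$, and the precise order of $\pi_0(S_\varphi^\natural)$ all at once. Each of these trivializes in the semisimple, large-residue-characteristic case, so there the comparison is transparent; in general it is delicate. The positive-depth half of the comparison, by contrast --- the term $\tfrac12\sum_i r_i(|\RS_{i+1}|-|\RS_i|)$ that carries the essential supercuspidal content --- falls straight out of \Cref{thm61} once the conductor-equals-depth dictionary is in hand.
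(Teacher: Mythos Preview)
Your strategy is the paper's: compute the formal degree from the Yu formula plus the Deligne--Lusztig dimension, decompose $\Ad\circ\varphi$ as a toral summand $V_0\simeq X^*(S^{\mathrm a})_\C$ plus monomial root summands $V_{[\alpha]}=\Ind_{W_{k_\alpha}}^{W_k}\psi_\alpha$, read off the depths of the $\psi_\alpha$ from the Howe factorization, and compare. The positive-depth matching goes exactly as you describe, and you are right that $\dim\rho=1$ because $S_\varphi^\natural$ is abelian for regular parameters.

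There is one concrete slip in the depth-zero bookkeeping. You write $|\mathsf G^0|_p=\exp_q\bigl(\tfrac12(\dim G^0_{y,0:0+}-\rank G)\bigr)$, but the exponent is $\tfrac12(\dim\mathsf G^0-\rank\mathsf G^0)$, and $\rank\mathsf G^0=\dim_\kappa S_{0:0+}=\dim X^*(S)^{I_k}\eqdef\dim M$, not $\dim S$; the two differ exactly when $S$ is ramified. The same $\dim M$ (not $\dim S$) is what the toral summand produces on the Galois side: the $L$-ratio contributes $q^{\dim M}$ and the $\varepsilon$-conductor of $V_0$ is $\dim S^{\mathrm a}-\dim M$. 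Your two collapsed formulas thus both carry $\tfrac12\dim S$ where the correct term is $\tfrac12\dim M$, so the errors cancel in the final comparison but the intermediate expressions are wrong. The identification $\rank\mathsf G^0=\dim M$ is a short N\'eron--Ogg--Shafarevich-style lemma that the paper isolates. Likewise, the ``tame reconciliation'' you flag is not a cancellation of loose correction terms but three exact identities drawn from Kaletha: $|\pi_0(S_\varphi^\natural)|=|X_*(S^{\mathrm a})_{\Gamma_k}|$ (which is \emph{not} $|M_{\Frob}|$), the formula $[S^{\mathrm a}(k):S^{\mathrm a}(k)_{0+}]=|X_*(S^{\mathrm a})_{I_k}^{\Frob}|\cdot|(\bar\kappa^\times\otimes M^\vee)^{\Frob}|$, and the compatibility $|X_*(S^{\mathrm a})^{I_k}_{\Frob}|\cdot|X_*(S^{\mathrm a})_{I_k}^{\Frob}|=|X_*(S^{\mathrm a})_{\Gamma_k}|$. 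The index $c$ and the discriminant exponents you worry about do not survive: replacing $\rho_\Psi$ by the inducing representation on $S(k)G^0(k)_{y,0}$ absorbs $c$, and for tame $k_\alpha/k$ the formula $\cond\Ind_{k_\alpha/k}\psi_\alpha=[k_\alpha:k](1+\depth_k\psi_\alpha)$ already packages the discriminant into the leading $[k_\alpha:k]$, leaving no residual tame terms on the root side.
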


Hiraga, Ichino, and Ikeda verified their conjecture in several cases,
building on the work of many others:
for an archimedean base field,
using Harish-Chandra's theory of discrete series
\cite{harish-chandra75};
for inner forms of $\GL_n$ and $\SL_n$,
using work of Silberger and Zink
\cite{zink93,silberger_zink96};
for some Steinberg representations,
using work of Kottwitz \cite{kottwitz88}
and Gross \cite{gross97a,gross97b};
for some unipotent discrete series
of adjoint split exceptional groups,
using work of Reeder \cite{reeder00};
and for some depth-zero supercuspidals
of pure inner forms of unramified groups,
using work of DeBacker and Reeder \cite{debacker_reeder09}.
In the years following the announcement of the conjecture,
Gan and Ichino \cite{gan_ichino14}
showed that it holds for $\tn U_3$,
$\tn{Sp}_4$, and $\tn{GSp}_4$;
Qiu \cite{qiu12} showed that it holds for the rank-one
metaplectic group;
Reeder and Yu \cite{reeder_yu14} and Kaletha \cite{kaletha15}
showed that it holds for epipelagic supercuspidals;
Gross and Reeder \cite{gross_reeder10}
showed that it holds for simple supercuspidals;
Ichino, Lapid, and Mao \cite{ichino_lapid_mao17}
showed that it holds for odd special orthogonal and metaplectic groups;
and Beuzart-Plessis \cite{beuzart-plessis20}
showed that it holds for unitary groups.

The formal degree conjecture is a ``meta-conjecture''
in the sense that it depends itself on a conjecture,
the local Langlands correspondence.
In order to verify the formal degree conjecture
one must first have access to
a candidate local Langlands correspondence,
or at least, to candidate $L$-packets.
Strictly speaking, the previous sentence is not entirely true because
some groups admit an analytic construction of the $\gamma$-factor
that bypasses the local Langlands correspondence,
though the two are expected to be compatible.
The main example is Godement-Jacquet's \cite{godement_jacquet72}
construction of the $L$- and $\varepsilon$-factors
for representations of the general linear group,
generalizing Tate's thesis.
Their construction explains how Hiraga-Ichino-Ikeda
were able to verify the formal degree conjecture
for the general linear group
using work that predated the Henniart \cite{henniart00}
and Harris-Taylor \cite{harris_taylor01}
constructions of the local Langlands correspondence.
Nonetheless, for the representations we consider in this paper,
an analytic theory of the $\gamma$-factor is not yet available,
and so we work with $L$-packets.

Recently, Kaletha has organized into $L$-packets
\cite{kaletha19a} most of Yu's supercuspidal representations,
the ``regular supercuspidal representations''.
His construction passes through
a pair~$(S,\theta)$ consisting
of an elliptic maximal torus~$S$ of~$G$
and a character~$\theta$ of~$S(k)$.
On the Galois side, one uses the Langlands-Shelstad
theory of $\chi$-data and extensions
of $L$-embeddings~\cite{langlands_shelstad87}
to construct a $L$-parameter for~$G$ from~$(S,\theta)$.
On the automorphic side, one uses
the pair~$(S,\theta)$ to produce an input
to Yu's construction,
hence a supercuspidal representation~$\pi$ of~$G(k)$.
We can thus interpret $\pi$ as a functorial lift of~$\theta$
with respect to the embedding $S\into G$.
The $L$-packet of~$\varphi$ consists, roughly,
of all $\pi$ produced in this way as we pass through
the various conjugacy classes of embeddings of~$S$ in~$G$;
\Cref{sec:llc} reviews the construction in more detail.

Since we can compute the formal degree
of Yu's representations, and Kaletha's $L$-packets
consist of such representations,
it is natural to ask whether the $L$-packets satisfy
the formal degree conjecture.
Our second main result,
proved in the body of the paper as \Cref{thm50},
is that they do.

\begin{theoremx}
\label{thm62}
Kaletha's regular $L$-packets satisfy
the formal degree conjecture.
\end{theoremx}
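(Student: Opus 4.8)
The plan is to combine the formal-degree formula of \Cref{thm45} --- which in the semisimple case takes the clean shape of \Cref{thm61} --- with an explicit evaluation of the adjoint $\gamma$-factor of a regular parameter, and then to match the two expressions. One checks directly that both sides of the conjecture are multiplied by the same scalar when~$\psi$ is rescaled, so we may assume that $\psi$ has level zero and that $\mu_\psi$ is the measure~$\mu$ of \Cref{thm61}. Fix $\pi\in\Pi_\varphi(G)$, and let $(S,\theta)$ be the pair and $\Psi=\bigl((G^i),y,(r_i),\rho,\dots\bigr)$ the Yu datum that Kaletha's construction attaches to~$\pi$. The key structural input is that the twisted Levi sequence $(G^i)$ is read off from the jumps of the Moy-Prasad filtration of~$\theta$: for a root~$\alpha$ lying in $\RS_{i+1}$ but not in~$\RS_i$, the character $\theta\circ\alpha^\vee$ of~$k^\times$ has depth exactly~$r_i$. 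All field extensions and $\chi$-data occurring below are tamely ramified, as the construction presupposes.

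On the Galois side one computes $|\gamma(0,\varphi,\Ad,\psi)|$ from the Langlands-Shelstad recipe for~$\varphi$, whose ingredients \Cref{sec:llc} recalls. Decomposing $\widehat{\frak g}/\,\widehat{\frak z}\,^{\Gamma\!_k}$ under the maximal torus $\widehat S\subseteq\widehat G$ normalized by~$\varphi$, the representation $\Ad\circ\varphi$ breaks into a ``Cartan'' summand --- a finite-image representation of~$\Gamma\!_k$ governed by the splitting field of~$S$ --- together with one summand for each $\Gamma\!_k$-orbit of roots, the orbit of~$\alpha$ contributing $\Ind_{W_{k_\alpha}}^{W_k}(\theta_\alpha\cdot\chi_\alpha)$, where $k_\alpha$ is the field cut out by the stabilizer of~$\alpha$, the character $\theta_\alpha$ corresponds to $\theta\circ\alpha^\vee$ under local class field theory, and $\chi_\alpha$ is Kaletha's $\chi$-datum character. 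Multiplicativity and inductivity of $\gamma$-factors --- the precise formula is recalled in \Cref{sec:gal:fac} --- then express $\log_q|\gamma(0,\varphi,\Ad,\psi)|$ as a sum of Artin conductors of the $\theta_\alpha\chi_\alpha$, of discriminants $\disc(k_\alpha/k)$ coming from the Langlands $\lambda$-constants attached to the inductions, and of the Cartan contribution. The decisive point is that, summed over all orbits of roots with their multiplicities and pushed down from~$k_\alpha$ to~$k$, the depth part of the conductor of~$\theta_\alpha$ reproduces \emph{exactly} the term $\tfrac12\sum_i r_i(|\RS_{i+1}|-|\RS_i|)$ of \Cref{thm61}. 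This match is forced by the way the $(G^i)$ are defined, and is the easy half of the argument.

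What remains is a \emph{residual}, or depth-zero, identity. After cancelling the term above, and writing $(\varphi,\rho')$ for the extended parameter that Kaletha's internal parameterization attaches to~$\pi$ --- here $\rho'$ is a character, since regularity of~$\theta$ forces the centralizer of~$\varphi$ to be abelian --- one must show
\[
\frac{\dim\rho}{[G^0(k)_{[y]}:G^0(k)_{y,0+}]}\,
\exp_q\bigl(\tfrac12\dim G+\tfrac12\dim G^0_{y,0:0+}\bigr)
=\frac{\dim\rho'}{|\pi_0(S_\varphi^\natural)|}\cdot|\gamma_{\tn{res}}|,
\]
where $\gamma_{\tn{res}}$ collects the Cartan summand, the depth-zero root orbits, the discriminants $\disc(k_\alpha/k)$, and the bounded corrections $\cond(\theta_\alpha)=\depth(\theta_\alpha)+1$ to the conductors of the wildly ramified~$\theta_\alpha$. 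I would establish this by reducing to the depth-zero regular supercuspidal~$\sigma_0$ of~$G^0(k)$ cut out by~$\theta$: Kaletha's construction is compatible with passage to the twisted Levi~$G^0$, so the residual parameter is the depth-zero parameter~$\varphi_0$ of~$\sigma_0$, and the displayed identity becomes the formal degree conjecture for~$\sigma_0$ --- the case treated by DeBacker and Reeder, which I would re-derive in the generality needed, computing $\dim\rho$ via Deligne-Lusztig theory for the reductive quotient $\mathsf G^0_y$, identifying $[G^0(k)_{[y]}:G^0(k)_{y,0+}]$ with $|\mathsf G^0_y(\F_q)|$ times a component-group index, pinning down $\pi_0(S_\varphi^\natural)$ from the component group of $\widehat S^{\,\Gamma\!_k}$, and matching the power of~$q$ hidden in $\dim G^0_{y,0:0+}$ with the tame Artin conductor of $\Ad\circ\varphi_0$.

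I expect the main obstacle to be precisely this residual identity, and within it the bookkeeping of the left-over powers of~$q$: reconciling the left-hand side --- the Bruhat-Tits quantity $\dim G^0_{y,0:0+}$, the component-group index, and the Deligne-Lusztig dimension $\dim\rho$ --- with the Galois-side total of tame Artin conductors, $\chi$-data conductors, and discriminants $\disc(k_\alpha/k)$. This needs a precise dictionary between the Moy-Prasad filtration of~$G^0$ at~$y$ and the ramification filtration of~$S$, uniform over the possibly ramified elliptic tori that arise in regular $L$-packets; one must also check that the Langlands $\lambda$-constants and the signs of the $\varepsilon$-factors contribute only a power of~$q$, since the conjecture involves just the absolute value $|\gamma(0,\varphi,\Ad,\psi)|$. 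For non-semisimple~$G$ the argument is structurally the same, with the usual refinements: division by $\widehat{\frak z}\,^{\Gamma\!_k}$, replacement of~$G$ by $G^{\tn a}=G/A$, and use of $S_\varphi^\natural$ in place of~$S_\varphi$.
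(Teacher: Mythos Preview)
Your outline is structurally close to the paper's argument: both decompose $\Ad\circ\varphi$ into a toral (``Cartan'') piece and a sum of induced characters indexed by Galois orbits of roots, and both identify the positive-depth root contributions with the term $\tfrac12\sum_i r_i(|\RS_{i+1}|-|\RS_i|)$ via \Cref{thm38}. Your proposal to package the remaining terms as a depth-zero statement for~$G^0$ is reasonable and close in spirit to what the paper does in \Cref{sec:aut:reg} and \Cref{sec:gal:toral,sec:gal:root}.

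There is, however, a genuine gap in the residual step, and it is not the bookkeeping you flag. You implicitly use $\cond(\psi_\alpha)=1+\depth(\psi_\alpha)$ and ``$L$-factor${}=1$'' for \emph{every} root~$\alpha$, but both statements require that $\psi_\alpha$ be \emph{ramified}. For $\alpha\notin\RS_0$ this follows from \Cref{thm38} since the depth is positive. For $\alpha\in\RS_0$ it does not: $\theta_{k_\alpha/k}\circ\alpha^\vee$ has nonpositive depth, and after twisting by the tamely ramified characters $d_\alpha$ and $\langle\alpha^\vee,\hat\jmath\circ r_\chi\rangle$ there is no a~priori reason the result cannot be unramified. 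If even one $\psi_\alpha$ were unramified, the corresponding $L$-factor would be nontrivial and your conductor count would be off by an integer. Your reduction to the depth-zero case of~$G^0$ does not sidestep this --- it is exactly in that case that the issue lives --- and DeBacker--Reeder do not cover it, since $G^0$ need not be unramified and $S$ need not be of the type they treat.

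The paper's resolution is \Cref{thm71,thm31}: one uses the \emph{regularity} of~$\varphi$, specifically that the connected centralizer in~$\widehat G$ of~$\varphi(I_k)$ is abelian, to show that if some $\psi_\alpha$ with $\alpha\in\RS_0$ were unramified then one could build a nontrivial unipotent element of that centralizer, a contradiction. This is the substantive content of the residual identity; once it is in hand, the remaining comparison (your ``bookkeeping'') is handled by the Deligne--Lusztig dimension formula, \Cref{thm46}, and the torus-cohomology identities of \cite[Lemmas~5.17--5.18]{kaletha15}, exactly along the lines you sketch. You should also be aware that the factorization of~$\varphi$ through~${}^LG^0$ is not on the nose: per \cite[Lemmas~5.2.8--5.2.9]{kaletha19a} it introduces an auxiliary tamely ramified twist~$\theta_b$, which must be tracked when you pass from~$G$ to~$G^0$ (see the proof of \Cref{thm31}).
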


To prove \Cref{thm62},
we start by computing the adjoint representation
attached to a regular supercuspidal parameter:
it is a direct sum of the complexified character lattice of~$S$
and some monomial representations constructed
from~$\theta$ and the root system of~$S$.
The $\gamma$-factor of the character lattice has already
been computed in the literature.
As for the monomial representations,
computing their $\gamma$-factors amounts
to computing the depths of the inducing characters.
The inducing characters are very close
to certain characters naturally constructed from~$\theta$,
and whose depth is usually easy to understand;
the difficulty in the proof is to quantify
the difference between the two characters.
To quantify it, we prove that $\chi$-data
satisfy a natural base change formula, \Cref{thm32},
and that the inducing characters are ramified, \Cref{thm31}.

A refinement of the formal degree conjecture
due to Gross and Reeder \cite[Conjecture~8.3]{gross_reeder10}
predicts the root number of the adjoint representation.
In future work, I hope to determine whether
Kaletha's regular $L$-packets also satisfy
this refined conjecture.

The structure of this paper mirrors the formal degree conjecture.
After two preliminary sections that
fix notation and review the Langlands correspondence
for regular supercuspidals,
we compute the formal degree of a Yu representation
in \cref{sec:aut},
we compute the Galois side of
the formal degree conjecture in \cref{sec:gal},
and we compare the two
in the brief \cref{sec:com}.

\tableofcontents

\section{Notation}
\label{sec:not}

\subsection{Sets}
Let $|X|$ denote the cardinality of the set~$X$.
Given a subset $Y\subseteq X$,
let $\one_Y:X\to\{0,1\}$
denote the indicator function of~$Y$.

Many operations on sets are expressed
by superscripts or subscripts.
When we have several operations denoted this way,
say $X\mapsto X_a$ and $X\mapsto X_b$,
we use a comma to denote the concatenation:
$X\mapsto X_a\mapsto X_{a,b}$.
This expression is notationally simpler
than the longer form $(X_a)_b$
and should cause no confusion.

\subsection{Filtrations}
\label{sec:not:fil}
Suppose $I$ is a totally ordered index set
and $(X_i)_{i\in I}$ is a decreasing,
$I$-indexed filtration of the set~$X$.
For $i\in I$, define
\begin{equation}
\label{eq51}
X_{i+}\defeq\bigcup_{j>i} X_j.
\end{equation}
Let 
$\widetilde I\defeq I\cup\{i+:i\in I\}\cup\{\infty\}$
denote Bruhat and Tits's extension of~$I$
\cite[6.4.1]{bruhat_tits72};
their definition is for $I=\R$ only,
but it is clear how to extend it to arbitrary~$I$.
\Cref{eq51} together with the convention
\[
X_\infty \defeq \bigcap_i X_i
\]
defines an extension of the given filtration
to a $\widetilde I$-indexed filtration.
If in addition $X=G$ is a group
and each $X_i=G_i$ is a subgroup of~$G$
then define, for $i<j$ in~$\widetilde I$,
\[
G_{i:j} \defeq G_i/G_j.
\]

We apply this formalism to the Moy-Prasad filtration
on a $p$-adic group~$G(k)$ and its Lie algebra~$\frak g$
in \Cref{sec:not:bt},
and to the Weil group~$W_k$ in \Cref{sec:not:field}.
The filtrations on $G(k)$ and~$W_k$ are
indexed by $\R_{\geq0}$,
and the filtrations on~$\frak g$ are indexed by~$\R$.
In \Cref{sec:llc:pairs} we consider
a filtration (of a root system) that is increasing,
not decreasing.
When the filtration on~$X$ is increasing,
its extension to~$\widetilde I$
is defined by
\[
X_{i+}\defeq\bigcap_{j>i} X_j,
\qquad\qquad
X_\infty\defeq\bigcup_i X_i.
\]

\subsection{Fields}
\label{sec:not:field}
Let $k$ be a nonarchimedean local field
of odd residue characteristic~$p$,
let $\cal O$ denote the ring of integers of~$k$,
and let $\kappa$ denote the residue field of~$\cal O$.
Given a finite algebraic extension~$\ell$ of~$k$,
let $e_{\ell/k}$ denote the ramification degree
and $f_{\ell/k}$ the residue degree.

\begin{remark}
\label{thm63}
Many of the works this paper is built on,
for instance, Kaletha's construction of regular
supercuspidal $L$-packets \cite{kaletha19a},
assume that $p$ is odd.
For this reason we, too,
assume for the rest of the paper that $p$ is odd.
% odd residue characteristic
\end{remark}

Let $\ord_k:k^\times\to\Z$ denote
the unique discrete valuation on~$k$ with value group~$\Z$.
We extend $\ord_k$ to a valuation on
the algebraic closure~$\bar k$
and denote the extension by~$\ord_k$ as well.
Hence the value group
for a finite extension~$\ell$
is $\ord_k(\ell^\times)=e_{\ell/k}^{-1}\Z$.

\begin{remark}
Aesthetic reasons might lead one to consider a more general
value group for~$k$ than~$\Z$.
Indeed, per our convention,
the value group for
an extension of~$k$ is generally than larger than~$\Z$.
Most of the computations in this paper
can be modified to accommodate
a different choice of value group
because their defining object inherit that choice.
Many of the depth computations of \Cref{sec:aut}
could be modified to carry through because
the Moy-Prasad filtration inherits
its jumps from the value group.
Similarly, the Artin conductor computations
of \Cref{sec:gal:fac} could be modified to carry through
because the upper numbering filtration
inherits its jumps from the value group.
However, this modification would break
the connection between the Artin conductor
and the Artin representation.
Moreover, since the $\varepsilon$-factor is defined
independent of the value group,
the relationship between
the Artin conductor and the $\varepsilon$-factor,
\Cref{eq52}, holds only for value group~$\Z$.
\end{remark}

Given a finite $\cal O$-module~$M$,
let $\len M$ denote the length of~$M$.
When $k$ has positive characteristic
the module $M$ is a $\kappa$-vector space
and its length is its dimension,
but when $k$ has mixed characteristic
the module $M$ is \emph{not} a vector space,
and we must work instead with its length.

Let $q\defeq|\kappa|$, a power of~$p$,
let $\exp_q(t)\defeq t^q$,
and let $\log_q$ be the functional inverse of~$\exp_q$.
The function $\exp_q$ is related to the length
by the equation
\[
\exp_q\len M = |M|.
\]

Let $W_k$ denote the Weil group of~$k$,
let $I_k\subseteq W_k$ denote the inertia subgroup of~$W_k$,
let $P_k\subseteq I_k$ denote the wild inertia subgroup,
and for $r\geq0$ let $W_k^r\subseteq W_k$ denote
the $r$th subgroup of~$W_k$ in its upper numbering filtration,
computed with respect to the valuation~$\ord$.
A \emph{representation of the Weil group}
is a continuous, finite-dimensional, complex
representation~$\pi$ of~$W_k$.
Given a finite extension $\ell$ of~$k$,
let $\pi|_\ell\defeq\pi|_{W_\ell}$.
The \emph{depth} of~$\pi$ is defined as
\[
\depth\pi\defeq\inf\{r\in\R : \pi(W^r_{\!k})=1\}.
\]
In order to make the filtration on the Weil group
compatible with the Moy-Prasad filtration,
we need to modify the upper numbering filtration on~$W_\ell$
for a finite extension $\ell$ of~$k$ by
using the valuation $\ord_k$ to define it
instead of the valuation~$\ord_\ell$.
When the extension is tame, as is usually the case,
this modification has the effect of scaling
the indices of the filtration by $e_{\ell/k}^{-1}$.
To make the dependence on~$k$ clear,
let $\depth_k$ denote the depth
of a representation of~$W_\ell$
where its filtration is computed using~$\ord_k$.
The distinction is crucial for the proof of \Cref{thm12}.

\subsection{Groups}
Let $G$ be a reductive $k$-group,
let $Z$ be the center of~$G$,
and let $A$ be the maximal split subtorus of~$Z$.
We reserve the symbols $S$ and~$T$ for tori,
often maximal tori of~$G$.
Let $\frak g$ and $\frak z$ denote
the Lie algebras of $G$ and~$Z$, respectively.

\begin{remark}
There are three exceptions to the notational convention
that $G$ denotes a reductive group
and $S$ and $T$ denotes tori.

First, we sometimes need to work with $\kappa$-groups
instead of $k$-groups.
This practice cannot be avoided,
but is so rare that we did not see the value
in introducing a separate notational convention
for~$\kappa$-groups.
So $G$, $S$, and $T$ denote $\kappa$-groups
in this setting, which takes place in small portions
of \Cref{sec:llc:reps,sec:aut:reg}.

Second, for reasons of notational clarity,
in \Cref{sec:aut} the symbol $\ul G$ denotes a $k$-group
and the symbol $G$ denotes
the topological group of its rational points,
as discussed in \Cref{sec:aut:not}.

Third, in \Cref{sec:aut:deg},
where we discuss the formal degree,
$G$ denotes an arbitrary locally profinite group,
the proper setting for that theory.
\end{remark}

Given a subgroup $H$ of~$G$,
let $H^{\tn a}\defeq G/A$.
The letter a abbreviates ``anisotropic''.
The notation hides the dependence
on the ambient group~$G$,
but the meaning should be clear from context:
typically $H$ is a maximal torus
or (twisted) Levi subgroup of~$G$.

Let $\widehat G$ denote the Langlands dual group of~$G$,
a complex reductive group,
and let ${}^LG=\widehat G\rtimes W$
denote the Weil form of the dual group.

Given an extension $\ell$ of~$k$
and an $\ell$-group~$H$,
let $\Res_{\ell/k}H$ denote the Weil restriction
of~$H$ from~$\ell$ to~$k$.

\subsection{Root systems}
Given a reductive $k$-group~$G$
and a maximal torus~$T$ of~$G$,
let $\RS(G,T)$ be the absolute root system
of~$G$ with respect to~$T$,
that is, the root system of~$G_{\bar k}$
with respect to~$T_{\bar k}$,
together with its natural Galois action.
Let $\underline\RS(G,T)$
denote the set of Galois orbits of~$\RS(G,T)$.
I prefer to think of~$\underline\RS(G,T)$
as the ``functor of roots'' in the sense
of SGA~3 \cite[XIX.3]{sga3new},
an equivalent but more elaborate perspective.
Reserve the letters $\alpha,\beta,\gamma,\dots$
for elements of~$\RS$, and their underlines
$\ul\alpha,\ul\beta,\ul\gamma,\dots$
for elements of~$\ul\RS$.
Given $\ul\alpha\in\underline\RS$,
let $\ul\alpha(\bar k)$ denote
the elements of~$\ul\alpha$, a subset of~$\RS$.

Given $\alpha\in\RS$,
let $\Gamma\!_\alpha$ denote the stabilizer
of~$\alpha$ in~$\Gamma\!_k$
and let $k_\alpha\defeq\bar k^{\Gamma\!_\alpha}$
denote the fixed field of~$\Gamma\!_\alpha$.
Given $\ul\alpha\in\RS$,
let $k_{\ul\alpha}$ denote a fixed field extension of~$k$
that is isomorphic,
for some $\alpha\in\ul\alpha(\bar k)$,
to the extension $k_\alpha$,
and let $\kappa_{\ul\alpha}$
denote the residue field of $k_{\ul\alpha}$.
We can define $k_{\ul\alpha}$
canonically as the inverse limit
of the groupoid of extensions~$k_\alpha$
with $\alpha\in\alpha(\bar k)$,
in the style of Deligne and Lusztig
\cite[Section~1.1]{deligne_lusztig76},
but since $\Gamma\!_k$ is nonabelian,
it is impossible to canonically identify
this limit with any one~$k_\alpha$.
The notation $k_{\ul\alpha}$ helps
us avoid choosing such an $\alpha$,
and is reserved for expressions that depend only on
the isomorphism class of the extension,
such as the degree $[k_{\ul\alpha}:k]$.

\subsection{Bruhat-Tits theory}
\label{sec:not:bt}
Given a reductive $k$-group~$G$,
let $\cal B(G)$ and~$\cal B^\tn{red}(G)$
be the extended and reduced 
Bruhat-Tits buildings of~$G$.
Let $[x]$ denote the image of~$x$
under the canonical reduction map
$\cal B(G)\to\cal B^\tn{red}(G)$.
Given a split maximal torus~$T$ of~$G$,
let $\cal A(G,T)$ and $\cal A^\tn{red}(G,T)$
denote the extended and reduced apartments of~$T$.
The apartment $\cal A(G,T)$ is noncanonically isomorphic
to the building $\cal B(T)$.
For us, the words ``building''
and ``apartment'' refer to the extended forms.

The apartment of a split maximal torus
is a classical construction,
defined in Bruhat and Tits's original papers on buildings
\cite{bruhat_tits72,bruhat_tits84}.
More recently, in a paper establishing
tame descent for buildings \cite{prasad_yu02},
Prasad and Yu showed
for any tame maximal torus~$S$ of~$G$
how to embed the building of~$S$
into the building of~$G$.
Although the embedding is not canonical,
the image of the embedding is canonical.
We denote this image by $\cal A(G,S)$
and call it the \emph{apartment} of~$G$ in~$S$,
though that terminology is typically reserved
for maximal split tori only.

Given a point $x$ of~$\cal B(G)$
or~$\cal B^\tn{red}(G)$,
let $G(k)_x$ denote the stabilizer
of~$x$ in~$G(k)$.
When the center of $G$ is anisotropic
the parahoric group $G(k)_{x,0}$
is of finite index in the stabilizer~$G(k)_x$;
in general, $A(k)_0G(k)_{x,0}$
is of finite index in~$G(k)_x$.

For each $x\in\cal B(G)$,
Moy and Prasad defined
\cite[Sections~2 and 3]{moy_prasad94}
a canonical decreasing $\R_{\geq0}$-indexed filtration of~$G(k)_x$
and a canonical decreasing $\R$-indexed filtration of~$\frak g$,
denoted by $G(k)_{x,r}$ and $\frak g_{x,r}$
and called the \emph{Moy-Prasad filtrations}.
When $G=S$ is a torus the point $x$
is irrelevant and we suppress it from the notation.
Although the filtrations are defined for every~$G$,
they are particularly well-behaved
when $G$ splits over a tame extension.
In this case, for instance,
there is a canonical isomorphism
\[
G_{x,r:r+} \simeq \frak g_{x,r:r+}
\]
for every $r>0$,
called the \emph{Moy-Prasad isomorphism}.
\Crefrange{sec:aut:split}{sec:aut:yu_grp}
discuss in much more detail
these filtrations and a generalization of them
due to Yu.

The Moy-Prasad filtration is compatible with our chosen
discrete valuation on~$k$ in the sense that for $r>0$,
\[
k^\times_r \defeq \{a\in k^\times : \ord_k(a-1)\geq r\}.
\]
Given a finite separable extension $\ell$ of~$k$
and a reductive $\ell$-group~$H$,
we compute the Moy-Prasad filtration on~$H(\ell)$
with respect to the norm~$\ord_k$, not $\ord_\ell$.
This convention implies that the Moy-Prasad filtration
is a topological invariant independent
of the base field in the following sense:
the Moy-Prasad filtration on the group~$G(\ell)$
agrees with the Moy-Prasad filtration
on the identical group $(\Res_{\ell/k}G)(k)$.

The \emph{depth} of an irreducible
admissible representation $\pi$ of~$G(k)$,
denoted by $\depth_k\pi$,
is the smallest real number~$r$
such that for some $x\in\cal B(G)$,
the representation~$\pi$ has a nonzero vector
fixed by $G(k)_{x,r+}$.
The subscript~$k$ is a reminder
that the depth, via the Moy-Prasad filtration
by which it is defined, depends on the base field~$k$.
It is not a priori clear that this minimum is attained, but
Moy and Prasad showed \cite[Theorem 3.5]{moy_prasad96} that
the depth is a nonnegative rational number.
This result makes the depth an indispensable tool
in the representation theory of $p$-adic groups.

\begin{remark}
Most of Bruhat-Tits theory carries through
when the field~$k$ is assumed only to be Henselian.
For example, the results of
\Cref{sec:aut:split,sec:aut:tame,sec:aut:yu_grp,sec:aut:len}
hold at this level of generality.
But once representation theory enters the picture,
we must assume $k$ is a local field.
\end{remark}

\subsection{Base change for groups and vector spaces}
Given a scheme $X$ over a base scheme~$Y$
and a morphism $Z\to Y$,
let $X_Z$ denote the base change of~$X$
from $Y$ to~$Z$, that is,
the pullback $X\times_Y Z$.
When $Z=\Spec A$ is the spectrum of a field~$A$,
we write $X_A$ for~$X_Z$.
The schemes $X$ that we base change
are in practice always group schemes.

Similar notation can be used for base change of modules.
Given a $B$-algebra~$A$ and an $A$-module~$M$,
let $M_B\defeq M\otimes_AB$.

We take the position that an algebraic group
carries the information of its base scheme.
This position forces our terminology
to differ slightly from common practice
in the literature where a $k$-group
is thought of as a $\bar k$-group
with a $k$-rational structure.
In this common language one can speak,
given two $k$-groups $G$ and $H$,
of morphisms $G\to H$ that are
not defined over~$k$.
For us, a morphism $G\to H$
is automatically defined over~$k$.
To speak of a morphism ``not defined over $k$''
in this common sense,
we would speak of a morphism $G_\ell\to H_\ell$
where $\ell$ is an extension of~$k$,
especially $\ell=\bar k$.

\subsection{Base change for characters}
\label{sec:not:base}
Let $S$ be a $k$-torus,
let $\ell$ be a finite separable extension of~$k$,
and let $T\defeq\Res_{\ell/k}S_\ell$.
Since $X^*(T)=\Ind_{\ell/k}\Res_{\ell/k}X^*(S)$,
there is a canonical map $X^*(S)\to X^*(T)$
of Galois lattices, the unit of 
the adjoint pair $(\Res_{\ell/k},\Ind_{\ell/k})$.
The dual of this unit
is a canonical map $N_{\ell/k}:T\to S$,
called the \emph{norm map};
we use the same name and notation for the map
$T(k)=S(\ell)\to S(k)$ on rational points.
Given a character $\theta:S(k)\to\C^\times$,
define the character
$\theta_{\ell/k}:S(\ell)\to\C^\times$ by
precomposition with the norm:
\[
\theta_{\ell/k}\defeq \theta\circ N_{\ell/k},
\]
We call $\theta_{\ell/k}$ the \emph{base change}
of~$\theta$ from $k$ to~$\ell$.
Contrary to the usual notation
for base change of schemes,
the notation for base change of characters
must include $k$, not just~$\ell$,
because the base field~$k$
cannot be recovered
from the topological group~$S(k)$.

The base change operation for characters 
realizes base change in the local Langlands correspondence.
For tori, this correspondence is a bijection between
the complex character group of~$S(k)$
and the Galois cohomology group $\tn H^1(W_k,\widehat S)$.
On the Galois side,
we can restrict the $L$-parameter~$\hat\theta$
of a character~$\theta$ to the Weil group $W_\ell$
of a finite separable extension.
But by the local Langlands correspondence for~$S_\ell$,
this parameter~$\hat\theta|_\ell$
corresponds to a character of~$S(\ell)$.
It is well known, and a formal consequence
of the properties of the local Langlands
correspondence for tori, that this character
is precisely the base-changed character just defined:
symbolically,
\[
\hat\theta|_\ell = \widehat{\theta_{\ell/k}}.
\]

Yu's Ottawa article \cite{yu_ottawa2} nicely summarizes
the local Langlands correspondence for tori,
and proves that for tame tori,
the local Langlands correspondence
preserves depth~\cite[Section~7.10]{yu_ottawa2}.
He does not discuss base change, however.

We also need to understand
how base change affects depth.

\begin{lemma}
\label{thm39}
Let $\ell$ be a finite separable extension of~$k$,
let $S$ be a $k$-torus,
and let $\theta:S(k)\to\C^\times$ be a character.
If either
\begin{enumerate}
\item
$\ell/k$ is unramified and $\depth_k\theta\geq0$ or
\item
$\ell/k$ tamely ramified and $\depth_k\theta>0$
\end{enumerate}
then $\depth_k\theta = \depth_k\theta_{\ell/k}$.
\end{lemma}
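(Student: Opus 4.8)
The plan is to unwind the definition of depth, reduce to the behaviour of the norm map $N_{\ell/k}\colon S(\ell)\to S(k)$ on Moy--Prasad filtration steps, and then reduce that in turn to the case of $\G_m$, where it is classical local class field theory. To begin, since $\theta_{\ell/k}=\theta\circ N_{\ell/k}$, the character $\theta_{\ell/k}$ is trivial on $S(\ell)_{r+}$ exactly when $\theta$ is trivial on the image $N_{\ell/k}\bigl(S(\ell)_{r+}\bigr)\subseteq S(k)$, so it suffices to establish: (a)~$N_{\ell/k}\bigl(S(\ell)_{r+}\bigr)\subseteq S(k)_{r+}$ for all $r\geq0$; and (b)~$N_{\ell/k}\bigl(S(\ell)_{r+}\bigr)=S(k)_{r+}$ for all $r>0$, and also for $r=0$ when $\ell/k$ is unramified. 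Granting these, the lemma follows formally: (a) gives $\depth_k\theta_{\ell/k}\leq\depth_k\theta$, while if $d=\depth_k\theta$ then --- taking the union of (b) over $r>0$ to get $N_{\ell/k}\bigl(S(\ell)_{0+}\bigr)=S(k)_{0+}$, and using that $d>0$ in the tamely ramified case --- fact (b) applies to every $r\in[0,d)$, so $\theta_{\ell/k}\bigl(S(\ell)_{r+}\bigr)=\theta\bigl(S(k)_{r+}\bigr)\neq1$ and hence $\depth_k\theta_{\ell/k}\geq d$.

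It remains to prove (a) and (b). Fact (a) is the standard statement that the norm contracts congruence filtrations, so the substance is (b), which I would handle by reducing to an induced torus. Choose a flasque resolution $1\to T_1\to R\xrightarrow{q}S\to1$ with $R$ a finite product of Weil restrictions $\Res_{k_i/k}\G_m$; since $q$ intertwines the norm maps of $R$ and $S$ and (in the range of $r$ that matters) induces surjections on Moy--Prasad filtration steps both over $k$ and over $\ell$, fact (b) for $R$ implies it for $S$, and for $R$ one may argue one factor at a time, reducing to $S=\Res_{m/k}\G_m$. Writing $m\otimes_k\ell=\prod_j m_j$ as a product of fields --- possible since $\ell/k$ is separable --- one gets $S(\ell)=\prod_j m_j^\times$, $S(k)=m^\times$, and $N_{\ell/k}\colon(x_j)_j\mapsto\prod_j N_{m_j/m}(x_j)$; moreover $e_{m_j/m}$ divides $e_{\ell/k}$, so each $m_j/m$ inherits tameness, respectively unramifiedness, from $\ell/k$. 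Everything then comes down to the claim for $\G_m$: for a tame extension $m'/m$ the norm carries the level-$n$ congruence subgroup of $(m')^\times$ onto the level-$\lceil n/e_{m'/m}\rceil$ subgroup of $m^\times$ for every $n\geq1$, and for an unramified $m'/m$ it is moreover onto $\mathcal{O}_m^\times$. This is classical; the essential input is that $\mathrm{Tr}_{m'/m}$ is onto the ring of integers exactly when $m'/m$ is tame, feeding a successive-approximation argument, together with surjectivity of the residue-field norm in the unramified case. (One checks that after reindexing by $\ord_k$ this says precisely $N_{m'/m}\bigl((m'^\times)_{s+}\bigr)=(m^\times)_{s+}$ for $s\geq0$.)

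The hard part will be fact (b), i.e.\ the surjectivity of the norm on Moy--Prasad steps. The first delicate point is the claim that a flasque resolution induces surjections $R(k)_r\onto S(k)_r$ on filtration steps; this is straightforward once $S$ splits over a tame extension --- the only case needed for Kaletha's regular supercuspidals, where the filtration and the norm are well understood --- but is more subtle for a wildly ramified torus. The second, and the reason for the depth hypotheses, is the behaviour at $r=0$: for a tamely ramified $\ell/k$ the norm need not be onto $S(k)_0$, the obstruction living in the norm maps on residue-field data and in the component group of the N\'eron model of $S$, and assuming $\depth_k\theta>0$ removes this by letting us work only with the pro-$p$ parts $S(\ell)_{0+}$ and $S(k)_{0+}$, onto which the norm is surjective; for an unramified extension the residue-field norms are already surjective, the obstruction disappears, and no lower bound on the depth is needed. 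As a consistency check in the tame case one can instead pass to $L$-parameters, where base change becomes restriction to $W_\ell$: the local Langlands correspondence preserves depth for tame tori, and since $\ell/k$ tame forces $P_\ell=P_k$, a representation of $W_k$ and its restriction to $W_\ell$ are trivial on the same positive-depth ramification subgroups in the $\ord_k$-normalisation, which gives the same equality of depths.
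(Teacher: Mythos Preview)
Your approach is essentially the paper's: reduce to $S=\G_{\tn m}$ by passing through induced tori and using compatibility of the Moy--Prasad filtration with torus morphisms (what the paper abbreviates as ``Yu's trick''), then handle $\G_{\tn m}$ via the classical behaviour of the norm map on unit filtrations. The paper organizes the $\G_{\tn m}$ step slightly differently---doing the unramified case first by local class field theory, then reducing the tame case to the totally ramified extension $\ell=k(\varpi^{1/e})$ and computing directly---but the substance is the same, and your more explicit account of the reduction (and of the tameness-of-$S$ caveat) is a fair expansion of the paper's sketch.
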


\begin{proof}
Using the same trick as in Yu's proof
of the depth-preservation theorem,
it suffices to prove the \lcnamecref{thm39}
in the case where $S=\G_{\tn m}$.
The first case is an immediate consequence
of local class field.
Using the first result,
reduce the second to the case
where $\ell$ is obtained from~$k$
by adjoining a root of a uniformizer;
from here, the result is a straightforward computation.
\end{proof}

\section{Langlands correspondence for regular supercuspidals}
\label{sec:llc}
In this section we review the Langlands correspondence
for regular supercuspidal representations,
following Kaletha's article \cite{kaletha19a}.
Many of the definitions, for instance,
regularity of $L$-parameters, are rather technical,
and instead of restating them,
we point to their definitions in the literature.
The description of regular representations
and the construction of their $L$-parameters
passes through a pair $(S,\theta)$
consisting of an elliptic maximal torus~$S$ of~$G$
and a character~$\theta$ of~$S(k)$
satisfying certain regularity conditions
reviewed in \Cref{sec:llc:pairs}.
The primary goal of this section, then,
is to understand, to the extent needed
to verify the formal degree conjecture,
how these pairs interface
with both sides of the Langlands correspondence.

On the automorphic side, the pair $(S,\theta)$
produces an input to Yu's construction~\cite{yu01}
of supercuspidals; 
we explain how this works in \Cref{sec:llc:reps}.
In this way we produce
a supercuspidal representation~$\pi_{(S,\theta)}$ of~$G(k)$.
When $(S,\theta)$ is ``tame elliptic regular'',
the representations that arise this way are precisely
the regular supercuspidal representations.

On the Galois side, one can define a certain class
of ``regular supercuspidal parameters''
and show that each arises from a pair~$(S,\theta)$
as the composition
\[
W_k \xto{{}^L\theta} {}^LS \xto{{}^Lj_\chi} {}^L G.
\]
Here the first map corresponds to~$\theta$
under the local Langlands correspondence for tori
and the second map is an extension
of a given Galois-stable embedding
$\hat\jmath:\widehat S\to\widehat G$.
There is a general procedure,
reviewed in \Cref{sec:llc:emb},
for extending $\hat j$ to ${}^Lj_\chi$ using
a certain object~$\chi$ called
a set of $\chi$-data.
In our setting one canonically constructs
such $\chi$-data from the pair~$(S,\theta)$,
producing a canonical extension~${}^Lj_\chi$
and thus a canonical $L$-parameter.
Using pairs $(S,\theta)$,
we organize regular supercuspidal parameters
into $L$-packets in \Cref{sec:llc:params}.
To first approximation a regular
supercuspidal $L$-packet consists
of the regular supercuspidal representations
$\pi_{(jS,\theta\circ j^{-1})}$ as
$j$ ranges over $G(k)$-conjugacy classes
of admissible embeddings $j:S\into G$;
in reality, however,
we must slightly modify
the character~$\theta\circ j^{-1}$.

\subsection{Tame elliptic regular pairs}
\label{sec:llc:pairs}
Pairs $(S,\theta)$, consisting of a $k$-torus~$S$ and
a character $\theta:S(k)\to\C^\times$
subject to certain conditions,
mediate the local Langlands correspondence
for regular supercuspidals.
This subsection reviews these conditions,
following the discussion in Kaletha's article
\cite[Sections 3.6 and 3.7]{kaletha19a},
and uses them to compute the depths
of certain auxiliary characters
that arise in \Cref{sec:gal:root}.

The simplest condition is tameness:
the pair $(S,\theta)$ is \emph{tame}
if $S$ is tame, that is, if $S$ splits over
a tamely ramified extension of~$k$.
In this paper $S$ is always assumed tame,
though we sometimes repeat the hypothesis for emphasis.

All other conditions on our pair
require that $S$ be embedded
as a maximal torus of a reductive group~$G$.
This requirement is extremely natural 
on the automorphic side, but on the Galois side,
we must reinterpret it carefully
since the embedding is allowed to vary.

So assume in the rest of this subsection
that $S$ is a maximal torus of~$G$.
The pair $(S,\theta)$ is \emph{elliptic}
if $S$ is elliptic, that is, if the torus~$S/Z$
(where $Z$ is the center of~$G$) is anisotropic.

For the final condition, regularity,
we need to probe more deeply the relationship
between $(S,\theta)$ and~$G$.
Let $\RS=\RS(G,S)$
and let $\ell$ be the splitting field of~$S$.
Given a real number~$r>0$, consider the set of roots
\[
\RS^r \defeq \{\alpha\in\RS \mid
(\theta_{\ell/k}\circ\alpha^\vee)(\ell_r^\times) = 1\}.
\]
The assignment $r\mapsto\RS^r$
is an increasing, Galois-stable,
$\R$-indexed filtration of~$\RS$.
Let $\RS_{r+}\defeq\bigcup_{s>r}\RS_s$,
let $r_{d-1} > \cdots > r_0 > 0$ be the breaks of this filtration,
and let $r_{-1} = 0$ and $r_d=\depth_k\theta$.
For each $0\leq i\leq d$, let $\RS_i\defeq \RS^{(r_{i-1})+}$.
It turns out~\cite[Lemma~3.6.1]{kaletha19a}
that $\RS_i$ is a Levi subsystem of~$\RS$.
Let $G^i$ be the connected reductive subgroup of~$G$
containing~$S$ whose root system
with respect to~$S$ is $\RS_i$;
the group~$G^i$ can be constructed
by Galois descent, for example.
Let $G^{-1}\defeq S$.

\begin{definition}[{\cite[Definition 3.7.5]{kaletha19a}}]
\label{thm70}
A tame elliptic pair is \emph{regular} if
\begin{enumerate}
\item
the action of the inertia group on~$\RS_0$
preserves a set of positive roots, and
\item
the stabilizer of the action of the group
$N(G^0,S)(k)/S(k)$
on $\theta|_{S(k)_0}$ is trivial.
\end{enumerate}
It is \emph{extra regular} if, in addition,
\begin{enumerate}
\item[($2'$)]
the stabilizer of the action of the group
$\Omega(G^0,S)(k)$ on $\theta|_{S(k)_0}$ is trivial.
\end{enumerate}
Here $N(G^0,S)$ is the normalizer of $G^0$ in~$S$
and $\Omega(G^0,S)$ is the Weyl group.
\end{definition}

When we compute in \Cref{sec:gal}
the formal degree of the regular
parameter attached to~$(S,\theta)$,
half of the problem
(the ``root summand'' of \Cref{sec:gal:root})
boils down to knowing for each coroot~$\alpha^\vee$
the depth of the character
\[
\theta_{k_\alpha/k}\circ\alpha^\vee;
\]
here $\alpha^\vee$ is interpreted
as a homomorphism $k_\alpha^\times\to S(k_\alpha)$.
Therefore, our main goal in this subsection
is to compute the depth of this character.
To carry out the computation,
we systematically decompose $\theta$
as a product of characters of known depth
using Kaletha's notion of a Howe factorization,
after reviewing an important component
of that definition, due to~Yu.

The definition of a Howe factorization relies
on a definition of Yu \cite[Section~9]{yu01}
for a character~$\phi:H(k)\to\C^\times$
of a twisted Levi subgroup~$H$ of~$G$ to be
\emph{$G$-generic of depth~$r$}.
We need not concern ourselves with
the precise definition of $G$-genericity,
but we do need one of its consequences,
which approximates the full definition.

\begin{lemma}
\label{thm21}
Let $G$ be a reductive $k$-group,
let $H$ be a tame twisted Levi subgroup of~$G$,
let $S\subseteq H$ be a tame maximal torus,
and let $\phi:H(k)\to\C^\times$ be
a character of positive depth~$r$
whose restriction to $H_\tn{sc}(k)$ is trivial.
Then $\phi$ is $G$-generic if and only if
for every root $\alpha\in\RS(G,S) \setminus\RS(H,S)$
and every finite tame extension $\ell$ of~$k_\alpha$,
the character $\phi_{\ell/k}\circ\alpha_\ell^\vee$
of $\ell^\times$ has depth~$r$.
\end{lemma}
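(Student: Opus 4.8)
The strategy is to encode~$\phi$ by the element of the dual Lie algebra that realizes it on its top Moy--Prasad layer, and then to translate both sides of the asserted equivalence into conditions on that element. Since $\depth_k\phi=r>0$ and $\phi$ is trivial on~$H_\tn{sc}(k)$, I would first fix a point $y\in\cal A(H,S)$ at which $\phi$ attains its depth and use the Moy--Prasad isomorphism, together with the fixed level-zero additive character~$\psi$, to attach to~$\phi$ an element~$X^*$ of the $(-r)$-graded piece of the dual of the Lie algebra of the center of~$H$, well defined modulo the $(-r)+$ piece, characterized by $\phi(1+Y)=\psi(\langle X^*,Y\rangle)$ for $Y\in\frak h_{y,r:r+}$. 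The triviality of~$\phi$ on~$H_\tn{sc}(k)$ is exactly what makes~$X^*$ factor through $\frak h/[\frak h,\frak h]$, so that the pairing $\langle X^*,H_\alpha\rangle$ with a coroot $H_\alpha\in\frak s(k_\alpha)$ (the value at~$1$ of the derivative of~$\alpha^\vee$) is defined and lies in~$k_\alpha$.

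Next I would unwind Yu's definition~\cite{yu01} of a $G$-generic character of depth~$r$: it amounts to two conditions on~$X^*$, namely Yu's conditions (GE1) and (GE2). The first is that $\ord_k\langle X^*,H_\alpha\rangle=-r$ for every $\alpha\in\RS(G,S)\setminus\RS(H,S)$; the second is a genericity requirement on the image of~$X^*$ in the residual reductive quotient at~$y$. A routine computation with the Moy--Prasad isomorphism --- in which the level-zero normalization of~$\psi$ and the normalization of depth by~$\ord_k$ play the decisive role, and which uses the compatibility of the Moy--Prasad filtration, the norm, and $\alpha^\vee$ with the passage between~$S$, $H$, and their abelianizations --- identifies $-\ord_k\langle X^*,H_\alpha\rangle$ with $\depth_k(\phi_{k_\alpha/k}\circ\alpha^\vee)$, the depth of the character $\phi_{k_\alpha/k}\circ\alpha^\vee$ of~$k_\alpha^\times$. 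Hence Yu's first condition for~$\alpha$ is equivalent to the assertion that $\phi_{k_\alpha/k}\circ\alpha^\vee$ has depth~$r$. Since this character then has positive depth and $k_\alpha/k$ is tame, \Cref{thm39} shows its depth is preserved under base change along any finite tame extension $\ell/k_\alpha$; and since $\phi_{\ell/k}\circ\alpha_\ell^\vee$ is precisely that base change --- by transitivity of base change and functoriality of the norm --- the family of conditions over all such~$\ell$ collapses to the single one at $\ell=k_\alpha$. As for Yu's second condition, under our standing hypotheses --- $p$ odd, $\phi$ of positive depth, $\phi$ trivial on~$H_\tn{sc}(k)$ --- I would argue that it follows formally from the first, triviality on~$H_\tn{sc}(k)$ placing~$X^*$ in the part of the dual where the residual centralizer computation is insensitive to torsion in the root datum; here one invokes the fact, going back to Yu, that the first condition implies the second under a mild restriction on~$p$ that is met in our situation.

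Assembling these pieces yields the chain: $\phi$ is $G$-generic $\iff$ Yu's two conditions hold $\iff$ Yu's first condition holds for all $\alpha\in\RS(G,S)\setminus\RS(H,S)$ (the second being automatic) $\iff$ $\phi_{k_\alpha/k}\circ\alpha^\vee$ has depth~$r$ for all such~$\alpha$ $\iff$ $\phi_{\ell/k}\circ\alpha_\ell^\vee$ has depth~$r$ for all such~$\alpha$ and all finite tame~$\ell/k_\alpha$, which is the claim. I expect the main obstacle to be the verification that Yu's second genericity condition follows from the first given only that~$p$ is odd; the remaining work is bookkeeping with the Moy--Prasad isomorphism, the twisted Levi structure, and a direct appeal to \Cref{thm39}.
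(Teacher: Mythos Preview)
Your approach differs from the paper's. The paper does not unpack Yu's definition of $G$-genericity; it cites Kaletha~\cite[Lemma~3.6.8]{kaletha19a}, which already establishes the equivalence in the special case where $\ell$ is a fixed splitting field of~$S$. From there the paper proceeds in two short steps: first down to $\ell=k_\alpha$ by naturality of the norm map, then out to an arbitrary finite tame extension~$\ell$ of~$k_\alpha$ via \Cref{thm39}. You and the paper thus agree on the final base-change step, but diverge on how to reach the $\ell=k_\alpha$ case: you rederive it from scratch via the Moy--Prasad element~$X^*$, while the paper absorbs that work into the citation. Your route is more self-contained; the paper's is shorter and offloads the delicate part to Kaletha.

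The obstacle you flag is real, and your proposed workaround does not close it. Yu's implication (GE1)$\Rightarrow$(GE2)~\cite[Lemma~8.1]{yu01} requires that $p$ not be a torsion prime for the dual root datum; ``$p$ odd'' alone does not suffice (e.g.\ $p=3$ for $G_2$). Your suggestion --- that triviality of~$\phi$ on~$H_\tn{sc}(k)$ forces~$X^*$ into a region where the residual centralizer is insensitive to torsion --- does not help: (GE2) concerns the stabilizer of the residual class of~$X^*$ under the Weyl group of~$G$, and the fact that $X^*$ lies in~$\frak z(H)^*$ (which it always does for such~$\phi$) says nothing about that $W(G,S)$-stabilizer. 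By citing Kaletha the paper inherits whatever hypotheses Kaletha imposes on~$p$ and avoids this verification entirely; if you want your direct argument to go through under only ``$p$ odd'', you would need an independent argument for (GE2), not the heuristic you sketch.
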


\begin{proof}
Kaletha proved this \lcnamecref{thm21}
in the case where $\ell$ is a fixed
splitting field of~$S$ \cite[Lemma~3.6.8]{kaletha19a}.
We can deduce our result in the case where $\ell=k_\alpha$
from his result using the naturality of the norm map,
and from there, we can deduce the result in general
using naturality and \Cref{thm39}.
\end{proof}

\begin{corollary}
\label{thm22}
In the setting of \cref{thm21},
$\depth_k\phi\in\ord(k_\alpha^\times)$
for each $\alpha\in\RS(G,S)\setminus\RS(H,S)$.
\end{corollary}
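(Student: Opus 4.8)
The plan is to read the corollary off \Cref{thm21}, specializing its auxiliary extension to the trivial one. First I would fix a root $\alpha\in\RS(G,S)\setminus\RS(H,S)$ and apply \Cref{thm21} with $\ell=k_\alpha$---a finite tame, indeed trivial, extension of~$k_\alpha$---using that $\phi$ is $G$-generic; this shows that the character $\phi_{k_\alpha/k}\circ\alpha^\vee$ of~$k_\alpha^\times$ has depth exactly $r=\depth_k\phi$.

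Next I would invoke the elementary fact that a \emph{positive} depth of a character of~$k_\alpha^\times$ must belong to the value group $\ord(k_\alpha^\times)=e_{k_\alpha/k}^{-1}\Z$. This is because the Moy-Prasad filtration on~$k_\alpha^\times$ changes only at the elements of~$\ord(k_\alpha^\times)$, so its successive quotients are nontrivial exactly at those indices, while any character of~$k_\alpha^\times$---being continuous---is trivial on a sufficiently deep filtration subgroup, so that its depth is attained. An attained positive depth is therefore one of those indices. Applying this to $\phi_{k_\alpha/k}\circ\alpha^\vee$, whose depth equals~$r$ by the first paragraph and is positive by hypothesis, gives $\depth_k\phi=r\in\ord(k_\alpha^\times)$; since $\alpha$ was arbitrary, this is the assertion.

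I do not expect any genuine obstacle: granting \Cref{thm21}, the corollary is purely formal, being essentially the $\ell=k_\alpha$ case of that result combined with the structure of $\G_{\tn m}$ over a local field. The one point that deserves attention is the base-field bookkeeping---the valuation, the Moy-Prasad filtration, and hence the notion of depth on~$k_\alpha^\times$, are all normalized using $\ord=\ord_k$ rather than~$\ord_{k_\alpha}$, in keeping with \Cref{sec:not:bt}, so that the value group in the conclusion is $\ord(k_\alpha^\times)$ as stated.
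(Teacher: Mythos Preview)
Your argument is correct and is precisely the intended derivation: the paper states \Cref{thm22} without proof, treating it as immediate from \Cref{thm21}, and your specialization $\ell=k_\alpha$ together with the observation that the positive jumps of the Moy-Prasad filtration on~$k_\alpha^\times$ (normalized via~$\ord_k$) lie in $\ord(k_\alpha^\times)$ is exactly how one unpacks that implication. Your care with the base-field normalization is well placed and matches the conventions of \Cref{sec:not:bt}.
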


Regularity is much less restrictive than genericity,
but we need to know something about genericity
in order to understand the depths
of various auxiliary characters constructed from~$\theta$
in \Cref{sec:gal:root}.
Roughly speaking, any character, regular or not,
can be decomposed as a product of generic characters
related to the filtration of the root system.
This decomposition is called a Howe factorization.

\begin{definition}
A \emph{Howe factorization} of~$(S,\theta)$
is a sequence 
$(\phi_i:G^i(k)\to\C^\times)_{-1\leq i\leq d}$
of characters satisfying the following properties.
\begin{enumerate}
\item
$\displaystyle\theta = \prod_{i=-1}^d \phi_i|_{S(k)}$.
\item
The character~$\phi_i$ is trivial on~$G^i_\tn{sc}(k)$
for $0\leq i\leq d$.
\item
The character~$\phi_i$ is $G^{i+1}$-generic of depth~$r_i$
for $0\leq i\leq d-1$;
the character~$\phi_d$ is trivial if $r_d = r_{d-1}$
and has depth~$r_d$ otherwise;
and the character~$\phi_{-1}$ is trivial if $G^0=S$
and otherwise satisfies $\phi_{-1}|_{S(k)_{0+}}=1$.
\end{enumerate}
\end{definition}

It turns out~\cite[Proposition~3.6.7]{kaletha19a}
that every tame pair admits a Howe factorization.
When $\alpha\notin\RS_0$,
we can use this factorization
to compute the depth of
$\theta_{k_\alpha/k}\circ\alpha^\vee$.

\begin{lemma}
\label{thm38}
Let $(S,\theta)$ be a tame pair
and let $\alpha\in\RS_i$, where $1\leq i\leq d$.
Then the character $\theta_{k_\alpha/k}\circ\alpha^\vee:
k_\alpha^\times\to\C^\times$ has depth~$r_{i-1}$.
\end{lemma}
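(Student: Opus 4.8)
The plan is to write $\theta$ as a product of characters of controlled depth via a Howe factorization, then track how each factor behaves after composition with~$\alpha^\vee$. Throughout, let $i$ be the unique index with $\alpha\in\RS_i\setminus\RS_{i-1}$; the hypothesis $1\le i\le d$ says exactly that $\alpha\notin\RS_0$. Fix a Howe factorization $(\phi_j\colon G^j(k)\to\C^\times)_{-1\le j\le d}$ of~$(S,\theta)$, which exists by \cite[Proposition~3.6.7]{kaletha19a}. Since $\theta=\prod_{j=-1}^d\phi_j|_{S(k)}$, and both base change of characters and precomposition with~$\alpha^\vee$ are homomorphisms of character groups,
\[
\theta_{k_\alpha/k}\circ\alpha^\vee=\prod_{j=-1}^{d}\psi_j,
\qquad
\psi_j\defeq(\phi_j|_{S(k)})_{k_\alpha/k}\circ\alpha^\vee.
\]
It then suffices to show that $\psi_{i-1}$ has depth exactly~$r_{i-1}$ while every other factor has strictly smaller depth: a finite product of characters of~$k_\alpha^\times$ has depth at most the largest depth among the factors, with equality when that largest depth is achieved by a single factor.

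For $i\le j\le d$ the factor $\psi_j$ is trivial. Indeed, $\alpha\in\RS_i\subseteq\RS_j=\RS(G^j,S)$, so $\alpha^\vee$ is a coroot of~$G^j$ and hence factors through the preimage of~$S$ in the simply connected cover $G^j_{\tn{sc}}$; since $\phi_j$ is trivial on~$G^j_{\tn{sc}}(k)$ and the norm map is compatible with the isogeny $G^j_{\tn{sc}}\to G^j$, the character $\psi_j$ vanishes. For $j=-1$ the character $\phi_{-1}|_{S(k)}$ kills~$S(k)_{0+}$, so it has depth~$0$ and therefore $\psi_{-1}$ has depth at most $0<r_0\le r_{i-1}$, as neither composition with the norm map (computed with respect to~$\ord_k$) nor composition with~$\alpha^\vee$ raises depth. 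For $0\le j\le i-2$ the character $\phi_j$ has depth $r_j\le r_{i-2}<r_{i-1}$, the breaks being strictly increasing, so $\psi_j$ again has depth $\le r_j<r_{i-1}$.

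To pin down the depth of~$\psi_{i-1}$ I would use genericity. By definition of a Howe factorization the character $\phi_{i-1}$ is $G^i$-generic of depth~$r_{i-1}$, and $r_{i-1}\ge r_0>0$. Applying \Cref{thm21} with $(G,H,S,\phi,r)=(G^i,G^{i-1},S,\phi_{i-1},r_{i-1})$ and with $\ell=k_\alpha$ regarded as a (trivial) tame extension of itself -- legitimate because $\alpha\in\RS(G^i,S)\setminus\RS(G^{i-1},S)=\RS_i\setminus\RS_{i-1}$ -- shows that $\psi_{i-1}=(\phi_{i-1}|_{S(k)})_{k_\alpha/k}\circ\alpha^\vee$ has depth exactly~$r_{i-1}$. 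Combined with the preceding paragraph, $\theta_{k_\alpha/k}\circ\alpha^\vee$ has depth~$r_{i-1}$.

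The main thing to watch, more a matter of bookkeeping than of substance, is the interplay of base change with the various groups: one must check that the quantity $\phi_{\ell/k}\circ\alpha^\vee_\ell$ appearing in \Cref{thm21} coincides with $(\phi|_{S(k)})_{\ell/k}\circ\alpha^\vee$, and that triviality on~$G^j_{\tn{sc}}(k)$ survives restriction to~$S(k)$ and the norm map; both follow from functoriality of the norm map for tori and the fact that coroots lift to the simply connected cover. One also needs that $N_{k_\alpha/k}$, computed with respect to~$\ord_k$, does not increase depth -- the easy inequality underlying \Cref{thm39} -- which is where tameness of~$k_\alpha/k$ and the standing assumption that $p$ is odd enter.
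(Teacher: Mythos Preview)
Your proof is correct and follows the same approach as the paper's: decompose via a Howe factorization, kill the factors with $j\geq i$ using condition~(2), then invoke genericity (\Cref{thm21}) together with the strict increase of the~$r_j$ to conclude. You are, if anything, slightly more careful than the paper in only claiming the exact depth~$r_{i-1}$ for~$\psi_{i-1}$ and mere upper bounds $\depth_k\psi_j\le r_j<r_{i-1}$ for the remaining factors, which is all that is actually needed.
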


\begin{proof}
Let $(\phi_j:G^j(k)\to\C^\times)_{-1\leq j\leq d}$
be a Howe factorization of~$(S,\theta)$.
Then
\[
\theta_{k_\alpha/k}\circ\alpha^\vee
= \prod_{j=-1}^d \phi_{j,k_\alpha/k}\circ\alpha^\vee. 
\]
Since $\alpha^\vee$ factors through $G^i$,
condition (2) of a Howe factorization implies that
the factors of this product are trivial for $j\geq i$.
By \Cref{thm21}, the $j$th remaining factor has depth~$r_j$,
and since the sequence $j\mapsto r_j$
is strictly increasing,
the product has depth~$r_{i-1}$.
\end{proof}

\Cref{thm38} conspicuously omits
the case where $\alpha\in\RS_0$.
We have more to say about this in
\Cref{sec:gal:root},
especially in \Cref{thm31,thm73}.

\subsection{Regular representations}
\label{sec:llc:reps}
Yu's seminal paper~\cite{yu01} constructs a broad class
of supercuspidal representations
starting from a certain triple $(\vec G, \pi_{-1},\vec\phi)$,
which we call, for reference, a \emph{Yu datum}.
This subsection reviews these triples
and explains how a tame elliptic regular pair
gives rise to a Yu datum.
Later, \Cref{sec:aut:yu} explains
in more detail how to construct
supercuspidal representations from Yu data.

There are three stages of representations 
used in in Yu's construction,
each informing the previous one:
representations of finite groups of Lie type;
depth-zero supercuspidal representations;
and supercuspidal representations of arbitrary depth.
Since the definition of regular supercuspidal
passes through each of these stages,
we start by reviewing each stage in turn.

The first stage is finite groups of Lie type.
In this paragraph only,
let $G$ be a reductive $\kappa$-group.
Representations of~$G(\kappa)$ are well understood
through the work of Deligne and Lusztig
\cite{deligne_lusztig76}.
They attached to an elliptic pair~$(S,\theta)$ over~$\kappa$
(that is, $S$ is an elliptic maximal torus of~$G$)
a virtual representation $\tn R_{(S,\theta)}$,
the Deligne-Lusztig induction.
If the character~$\theta$ of the $\kappa$-torus~$S$
is \emph{regular}%
~\cite[Definition~3.4.16]{kaletha19a}
then $\pm\tn R_{(S,\theta)}$
is an irreducible representation.
We say that a representation~$\rho$ of~$G(\kappa)$
is \emph{regular} if it is isomorphic to such
a Deligne-Lusztig representation.

Passing to the second stage,
depth-zero supercuspidals,
the result that initiated general constructions
of supercuspidals was the following classification
theorem \cite[Proposition~6.8]{moy_prasad96}
of Moy and Prasad:
for every depth-zero supercuspidal
representation~$\pi$ of~$G(k)$,
there is a vertex $x\in\cal B^\tn{red}(G)$
such that $\pi|_{G(k)_{x,0}}$
contains the inflation to~$G(k)_{x,0}$
of an irreducible cuspidal representation~$\widetilde\rho$
of~$G(k)_{x,0:0+}$.
Furthermore, we can recover $\pi$
by compact induction:
there is some representation $\rho$
of $G(k)_x$ such that 
$\rho|_{G(k)_{x,0}}$ contains
the inflation of $\widetilde\rho$
and such that $\pi=\cInd_{G(k)_x}^{G(k)}\rho$.
The depth-zero supercuspidal representation~$\pi$
is \emph{regular} if the representation~$\rho$
of~$G(k)_{x,0:0+}$ is regular.
Regular depth-zero supercuspidals~$\pi$
enjoy two pleasant properties.

First, there is a canonical bijection between
regular depth-zero supercuspidals
and conjugacy classes 
of regular tame elliptic pairs~$(S,\theta)$
of depth zero in which the torus~$S$
is \emph{maximally unramified} in~$G$
\cite[Definition 3.4.1]{kaletha19a}.
In particular, we can recover~$S$ from~$\pi$.

Second, it turns out%
~\cite[Sections 3.4.4 and 3.4.5]{kaletha19a}
that $\pi_{(S,\theta)}$ can be compactly induced from
a representation~$\eta_{(S,\theta)}$
of the group $S(k)G(k)_{x,0}$.
This is an improvement over Moy and Prasad's
theorem, which uses the larger 
stabilizer group $G(k)_x$ instead;
generally $S(k)G(k)_{x,0}$ is easier
to understand than $G(k)_x$.
The fact that $\pi$ is compactly induced
from this smaller group
plays a crucial role in our final computation,
in \Cref{sec:aut:reg},
of the formal degree of a regular supercuspidal.

We can now discuss the third stage,
Yu's general construction of supercuspidals.

To start, we recall the definition of a Yu datum.
A subgroup $H$ of~$G$ is a \emph{twisted Levi}
subgroup if there is a finite separable
extension $\ell$ of~$k$ splitting~$G$
such that $H_\ell$ is a Levi subgroup of~$G_\ell$,
and $H$ is \emph{tame} if $\ell$
can be taken to be a tame extension of~$k$.
A \emph{twisted Levi sequence} in~$G$
is an increasing sequence
\[
\vec G = (G^0\subsetneq G^1
\subsetneq\cdots\subsetneq G^d)
\]
of twisted Levi subgroups of~$G$;
it is \emph{tame} if each of its members is tame.
The first component $\vec G$ of a Yu datum
is a tame twisted Levi sequence;
the second component~$\pi_{-1}$ of a Yu datum
is a depth-zero supercuspidal representation
of~$G^0(k)$; and
the third component~$\vec\phi$ of a Yu datum
is a sequence of characters
\[
\vec\phi = (\phi_i:G^i(k)\to\C^\times)_{0\leq i\leq d}.
\]
These three objects are required to
satisfy certain conditions that
\Cref{sec:aut:yu} spells out in detail.
In fact, in that section we work
with a certain five-tuple instead of a Yu datum,
but the two objects are closely related
\cite[Section~3.1]{hakim_murnaghan08}.

To simplify the following definition,
we assume in the rest of this subsection
that $p$ does not divide the order of 
the fundamental group of~$G$.
Kaletha defined regularity in general
using $z$-extensions \cite[Section~3.7.4]{kaletha19a},
but we have no need to understand how this works.

\begin{definition}
A Yu datum $(\vec G,\pi_{-1},\vec\phi)$
is \emph{regular} if $\pi_{-1}$ is 
a regular depth-zero supercuspidal representation.
A supercuspidal representation is \emph{regular}
if it is isomorphic to a supercuspidal representation
constructed from a regular Yu datum.
\end{definition}

We have thus defined the supercuspidal
representations of interest to us;
the next matter is to connect them
to torus-character pairs.

Given a Yu datum $(\vec G,\pi_{-1},\vec\theta)$,
we can find a maximally unramified torus $S$ of~$G^0$
and a regular depth-zero character~$\phi_{-1}$ of~$S(k)$
such that $\pi_{-1}=\pi_{(S,\phi_{-1})}$.
Setting
\[
\theta = \prod_{i=-1}^d \phi_i|_{S(k)}
\]
then produces a tame elliptic regular pair~$(S,\theta)$.
Conversely, given such a pair $(S,\theta)$,
with Howe factorization
$(\phi_i:G^i(k)\to\C^\times)_{-1\leq i\leq d}$,
the triple 
\[
(\vec G=(G^i)_{0\leq i\leq d},
\pi_{-1}=\pi_{(S,\phi_{-1})},
\vec\phi=(\phi_i)_{0\leq i\leq d})
\]
is a Yu datum.
It turns out \cite[Proposition 3.7.8]{kaletha19a}
that these assignments are bijections
modulo the appropriate equivalences.
In this way, we can form a regular
supercuspidal representation $\pi_{(S,\theta)}$
from a tame elliptic regular pair~$(S,\theta)$.

\subsection{\texorpdfstring{$L$}{L}-embeddings}
\label{sec:llc:emb}
In this subsection we explain and study a formalism
of Langlands and Shelstad for extending
an embedding $\hat\jmath:\widehat S\to\widehat G$
with Galois-stable $\widehat G$-conjugacy class
to an $L$-embedding ${}^L j:{}^L S\to{}^L G$.
Fix a $\Gamma\!_k$-pinning of~$\widehat G$
with maximal torus~$\widehat T$.

The first difficulty in extending $\hat\jmath$
to ${}^L j$ is to reconcile
the Galois actions on~$\widehat S$ and~$\widehat G$.
Specifically, let $\tau_G$ denote the action
(homomorphism) of~$W_k$ on~$\widehat T$
through its action on~$\widehat G$;
let $\tau_S$ denote the action of~$W_k$
on~$\widehat T$ through its action on~$\widehat S$,
transferred using~$\hat\jmath$;
let $N$ be the normalizer of~$\widehat T$ in~$\widehat G$;
and let $\Omega=\Omega(\widehat G,\widehat T)$ be the Weyl group.
Given a Weil element $w\in W_k$,
thought of as an element of~${}^L S$,
its image under an extension~${}^L j$
has the form $nw$ where $n\in N$
lifts the Weyl element
\[
\omega_{S,G}(w)\defeq \tau_S(w)\tau_G(w)^{-1}\in\Omega,
\]
so that $nw$ acts on~$\widehat T$ by
the $\widehat S$-action.
The lift exists precisely because
the $\widehat G$-conjugacy class of $\hat\jmath$
is Galois-stable.
To define the extension ${}^L j$, then,
we need only choose a specific lift~$n$
of~$\omega_{S,G}(w)$ to~$N$.

For many reductive groups,
in particular, the special linear group,
the projection map $N\to\Omega$
does not admit a homomorphic section.
Nonetheless, by finding a good way to lift
fundamental reflections, Tits~\cite{tits66} defined
a canonical set-theoretic section $n:\Omega\to N$,
which we call the \emph{Tits lift}.
The precise definition of the lift
\cite[Section~2.1]{langlands_shelstad87}
is not so important for us.

Since the Tits lift is not a homomorphism,
the candidate formula
\[
w\mapsto n(\omega_{S,G}(w))w
\]
for ${}^L j|_{W_k}$ does not define a homomorphism
$W_k\to{}^L G$.
To get around this problem,
Langlands and Shelstad studied the failure
of this formula to define a homomorphism
as measured by the function
$t:W_k\times W_k\to\widehat T$ given by
\[
t(w_1,w_2) \defeq 
(n(w_1)w_1)(n(w_2)w_2)(n(w_1w_2)w_1w_2)^{-1}.
\]
Using an object~$\chi$ called a set of $\chi$-data,
whose definition we review momentarily,
they constructed a function $r_\chi:W_k\to\widehat S$
that negates this failure in the sense that
$\partial(\hat\jmath\circ r_\chi) = t^{-1}$,
where $\partial$ is the coboundary operator.
Hence the modified formula
\[
w\mapsto\hat\jmath(r_\chi(w))n(\omega_{S,G}(w))w
\]
does define a homomorphism $W_k\to{}^L G$,
and in total, the extension ${}^L j_\chi$
of~$\hat\jmath$ is given by the formula
\[
{}^L j_\chi(sw)
= \hat\jmath(s\,r_\chi(w))n(\omega_{S,G}(w))w.
\]

This concludes our outline of the Langlands-Shelstad
procedure for extending an embedding of a torus
to an $L$-embedding.
Later, in \Cref{sec:gal:chi},
we recall the definitions of $\chi$-data
and the function~$r_\chi$.

\subsection{Regular parameters}
\label{sec:llc:params}
This subsection is largely an expository account
of Sections~5.2 and~6.1 of Kaletha's article~\cite{kaletha19a}.
Our goal is to describe the regular 
supercuspidal $L$-parameters
and their construction from tame elliptic extra regular pairs.

The definition of regularity for $L$-parameters
\cite[Definition 5.2.3]{kaletha19a}
is not important for us,
so we omit the precise statement:
roughly speaking, a parameter~$\varphi$ is regular
if it takes the wild subgroup to a torus
and the centralizer of the inertia subgroup
is abelian
(in fact, this is the definition
of a ``strongly regular'' parameter).
Consequently, the groups $S_\varphi$ 
and $S_\varphi^\natural$ appearing
in the statement of the formal degree conjecture are abelian,
so that their irreducible representations are one-dimensional.
This means that we can ignore the factor $\dim\rho$
appearing in the Galois side of the formal degree conjecture,
since it equals~$1$.

However, we should explain the relationship
between regularity and torus-character pairs.
To classify regular parameters,
Kaletha introduced an auxiliary category
of \emph{regular supercuspidal $L$-packet data}
whose objects are quadruples $(S,\hat\jmath,\chi,\theta)$
consisting of
\begin{itemize}
\item
a tame torus~$S$ of dimension the absolute rank of~$G$,
\item
an embedding $\hat\jmath:\widehat S\to\widehat G$
of complex reductive groups whose $\widehat G$-conjugacy class
is Galois stable,
\item
a minimally ramified set of $\chi$-data for~$\RS(G,S)$, and
\item
a character $\theta:S(k)\to\C^\times$.
\end{itemize}
For the meaning of
\emph{minimally ramified $\chi$-data},
see \Cref{thm66,thm67} in \Cref{sec:gal:chi}.
These objects are required to satisfy additional
conditions \cite[Definition 5.2.4]{kaletha19a}
that do not concern us here.
One can also define a morphism of such data,
organizing them into a category
in which all morphisms are isomorphisms.

\begin{warning}
In the definition of regular supercuspidal $L$-packet data,
we do not assume that $S$ is a maximal torus of~$G$.
In general, the $\widehat G$-conjugacy class of~$\hat\jmath$
gives rise to a Galois-stable
$G(\bar k)$-conjugacy class of embeddings
$S_{\bar k}\into G_{\bar k}$
whose elements are called \emph{admissible}
(with respect to $\hat\jmath$)
\cite[Section~5.1]{kaletha19a}.
Since $S$ is elliptic,
this $G(\bar k)$-conjugacy class contains
embeddings defined over~$k$.
However, there is no canonical such embedding,
or even $G(k)$-conjugacy class of embeddings.
This failure is related to
the need to organize supercuspidal
representations into $L$-packets.
\end{warning}

The key property of the category 
of regular supercuspidal $L$-packet data
is that the isomorphism classes of its objects
are in natural bijection with
equivalence classes of regular supercuspidal parameters.
Given a regular supercuspidal $L$-packet
datum~$(S,\hat\jmath,\theta,\chi)$,
its parameter is the composition
\[
{}^L j_\chi\circ{}^L \theta,
\]
where ${}^L j_\chi$
is the $L$-embedding of \Cref{sec:llc:emb};
this is the direction of the correspondence
that we need to understand
when we compute, in \Cref{sec:gal},
the absolute value
of the adjoint $\gamma$-factor.

\begin{comment}
As we saw in the introduction,
for various purposes it can be useful to enhance
the naive version of the local Langlands correspondence.
To organize the tempered $L$-packets
of non-quasisplit groups,
Kaletha introduced a sophisticated
cohomological formalism \cite{kaletha16} 
of rigid inner forms,
an enhancement of the ordinary notion
of inner form
and a generalization of Vogan's
earlier notion of pure inner forms \cite{vogan93}.
For our purposes this formalism is overkill,
and we instead work with the
most naive kind of $L$-packet,
those whose constituents are supercuspidal
representations of~$G(k)$.
\end{comment}

Let $(S,\theta)$ be a tame elliptic extra regular pair.
Assume that there is at least one
admissible embedding~$j$ of~$S$ as
a maximal torus of~$G$.
Instead of just pulling back
the character~$\theta$ to~$jS$,
we need to modify it slightly:
define
\[
j\theta' \defeq \theta\circ j^{-1}\cdot\varepsilon
\]
where $\varepsilon =\varepsilon_{f,\tn{ram}}
\cdot\varepsilon^\tn{ram}$
is a certain tamely-ramified
Weyl-invariant character of~$jS$
\cite[Section~5.3]{kaletha19a}.
Kaletha used the character formula of
Adler, DeBacker, and Spice
\cite{adler_spice08,adler_spice09,debacker_spice18}
to construct from $(S,\theta,j)$
a certain minimally ramified set~$\chi$ of $\chi$-data,
which appears, for one, in the definition of~$\varepsilon$.
Then the $L$-packet corresponding
to $(S,\hat\jmath,\chi,\theta)$ consists
of the set of regular supercuspidal representations
\[
\pi_{(jS,j\theta')}
\]
where $j:S\into G$ ranges over
the $G(k)$-conjugacy classes of admissible embeddings.

\section{Automorphic side}
\label{sec:aut}
In \Cref{sec:llc:reps}, we outlined Yu's construction
of supercuspidal representations.
In this section we calculate
the formal degree of such a representation.
This result is of independent interest,
and could be used to verify the formal degree
conjecture for broader classes
of supercuspidal representations than those
considered in this paper.

The basic idea of the computation is quite simple,
but various technical complications arise
in the process.
As \Cref{sec:aut:yu} explains,
Yu's representations are obtained by compact-induction
of a finite-dimensional representation
of a compact-open (or really, compact-mod-center)
subgroup of~$G(k)$.
There is a general formula for the formal degree
of such a representation in terms
of the dimension of the starting representation
and the volume of the subgroup.
\Cref{sec:aut:deg} explains this formula
and reviews the notion of formal degree.
To compute the formal degree of a Yu representation,
then, one need only compute two numbers,
a dimension and a volume.

The dimension comes from Deligne-Lusztig theory
and is straightforward to compute in our case.
We work it out in \Cref{sec:aut:reg},
where we specialize the formal degree computation
to the case of a regular supercuspidal representation.

The volume comes from Bruhat-Tits theory,
and is much more difficult to compute.
Still, the basic idea is clear.
Computing the volume of a compact-open subgroup
amounts to computing its index
in a larger group of known volume,
so the volume computation boils
down to an index computation.
Using the Moy-Prasad isomorphism,
that index computation, in turn,
boils down to a computation
of the subquotients in
the Moy-Prasad filtration on the Lie algebra.

The groups used in Yu's construction
generalize the subgroups 
of the Moy-Prasad filtration.
In \Cref{sec:aut:split,sec:aut:tame,sec:aut:yu_grp}
we review their
construction and explain various ways
in which Yu's theory is an elaboration
of the theory of Moy and Prasad
\cite{moy_prasad94,moy_prasad96},
or going back even farther,
the theory of Bruhat and Tits
\cite{bruhat_tits72,bruhat_tits84},
Our goal in that lengthy section is to generalize
the Moy-Prasad isomorphism to Yu's groups,
and to understand the extent to which the Lie algebra
of one of Yu's groups
decomposes as a direct sum of root lines.
After these preliminaries,
we compute the dimension of such a Lie algebra
in \Cref{sec:aut:len}.
The Moy-Prasad isomorphism then translates
this dimension into the subgroup index
that we need to compute.
At this point the main steps are in place,
and in \Cref{sec:aut:comp} we walk up
the staircase and finish the computation.

To make statements like \Cref{thm30} easier to read,
in \Cref{sec:aut:not} we introduce
notation particular to \Cref{sec:aut}
for reductive groups and their topological groups
of rational points.

\subsection{Notation}
\label{sec:aut:not}
It is important for many reasons 
to distinguish between a linear algebraic $k$-group
and the group of its rational points.
The former is a $k$-scheme
and the latter is just an abstract group,
a topological group if $k$ has a topology.
Many constructions are easier on the level of schemes,
but for the kind of representation theory
we consider in this paper,
one can work only with the group of rational points.

It is conventional in algebraic geometry
to denote a $k$-group by~$G$
and the group of its rational points by~$G(k)$.
Following the convention in this section,
however, would create a confusing proliferation
of ``$(k)$'' suffixes.
In \Cref{sec:aut} only, therefore, we underline,
denoting $k$-groups by $\ul G, \ul H,\dots$
and their groups of rational points by $G,H,\dots$:
\[
G \defeq \ul G(k).
\]
In particular, we write
$\vec{\ul G}$ for a twisted Levi sequence
and $\vec G$ for the sequence of topological groups
obtained from it by taking $k$-points.
The convention extends to $\cal O$- and~$\kappa$-schemes as well.
For instance, we write $\ul G_{x,r}$
for the smooth $\cal O$-group,
constructed by Yu~\cite[Section~8]{yu02},
whose group of $\cal O$-points is the
Moy-Prasad group~$G_{x,r}$.
Similarly, $\ul G_{x,0:0+}$
denotes the maximal reductive quotient
of the special fiber of~$\ul G_x$,
a $\kappa$-group.

\subsection{Formal degree}
\label{sec:aut:deg}
Here we review the formal degree,
following the relevant section 
of Renard's monograph on representations
of $p$-adic groups \cite[IV.3]{renard10},
then calculate the formal degree
of a compactly induced representation.
Contrary to the conventions of the rest of the paper,
in this subsection only,
let $G$ be a unimodular locally profinite group,
let $Z$ be the center of~$G$,
and let $(\pi,V)$ be a smooth irreducible
representation of~$G$.
Let $A$ be a closed subgroup of~$Z$
such that $Z/A$ is compact,
and let $\mu$ be a Haar measure on~$G/A$.
Several of our results use $A$
in the statement but are independent
of the choice of~$A$.

The \emph{matrix coefficient} of~$\pi$
with respect to~$v\in V$ and $v^\vee\in V^\vee$
(where $V^\vee$ is the smooth dual of~$V$)
is the function $\pi_{v,v^\vee}:G\to\C$ defined by
$\pi_{v,v^\vee}(x) = \langle\pi(x)v,v^\vee\rangle$.
Since $\pi$ is irreducible,
there is a character~$\chi$ of~$Z$,
called the \emph{central character} of~$\pi$,
such that $\pi(z) = \chi(z)$ for all $z\in Z$.
Assume that the central character is unitary.
In this case, the function
$x\mapsto|\pi_{v,v^\vee}(x)|^2$
is constant on cosets of~$A$,
and hence defines a function on~$G/A$.
We write $|\pi_{v,v^\vee}|_{L^2(G/A,\mu)}^2$
for the integral of this function,
and say that $(\pi,V)$ is \emph{discrete series}
(with respect to~$A$) if
$|\pi_{v,v^\vee}|_{L^2(G/A,\mu)}<\infty$
for all~$v\in V$ and $v^\vee\in V^\vee$.
This condition is independent of~$A$,
and also of the choice of Haar measure on~$G/A$.

In practice, it is useful to slightly weaken the definition
of a discrete series representation,
and to define a representation to be
\emph{essentially discrete series}
if it becomes discrete series after twisting
by some character of the group.

It can be shown that every discrete series
representation is \emph{unitary},
not in the sense of Hilbert space representations,
but in the sense that it admits
a positive-definite $G$-invariant Hermitian product.
The resulting isomorphism between
$\overline V$ and~$V^\vee$
defines a matrix coefficient~$\pi_{v,w}$
for $v,w\in V$.
Set $\pi_v \defeq \pi_{v,v}$.

For a discrete series representation~$(\pi,V)$,
one would hope for a relationship
between the norm of a vector
and the $L^2$-norm of its matrix coefficient.
Although these norms are not equal in general,
it turns out that they differ
by a multiplicative constant
depending only on~$\pi$ and the Haar measure on~$G/A$,
not on the vector.
This constant is called the \emph{formal degree}.

\begin{definition}[{\cite[IV.3.3]{renard10}}]
Let $(\pi,V)$ be an essentially discrete series representation
and let $\mu$ be a Haar measure on~$G/A$.
If $\pi$ is in addition discrete series
then there exists a positive real
constant~$\deg(\pi,\mu)$,
called the \emph{formal degree} of~$\pi$,
such that for all $v\in V$,
\[
|\pi_v|^2_{L^2(G/A,\mu)} = \frac{|v|^2}{\deg(\pi,\mu)}.
\]
In general, we define the formal degree of~$\pi$
as the formal degree of any discrete series
representation of~$G$ obtained from~$\pi$
by twisting by a character of~$G$.
\end{definition}

Evidently the formal degree scales inversely
with the Haar measure used to define it:
\[
\deg(\pi,c\mu) = c^{-1}\deg(\pi,\mu).
\]

\begin{remark}
\label{thm49}
Assume $Z$ is compact in this remark for simplicity.
In harmonic analysis, one studies
the set of irreducible unitary representations of~$G$
by endowing it with a certain topology and measure,
called the \emph{Plancherel measure}.
The measure, though not the topology, depends
on a choice of Haar measure~$\mu$ on~$G$.
An irreducible unitary representation $(\pi,V)$
is discrete series if and only if
it is an isolated point of positive measure,
and that measure is the formal degree of~$\pi$
with respect to~$\mu$.
In particular, if $G$ is compact
and we choose the Haar measure on~$G$
giving it volume one
then the formal degree of~$(\pi,V)$
is just the dimension of~$V$.
\end{remark}

Discrete series representations are closely related
to supercuspidal representations,
but neither notion implies the other.
Recall that a smooth irreducible representation
is \emph{supercuspidal} if its matrix coefficients
have compact support modulo the center,
or equivalently, modulo~$A$.
If a supercuspidal representation
has unitary central character
then it is certainly discrete series,
since compactly supported functions are square-integrable;
but since not every compactly supported function
is square-integrable,
in general, there are discrete series representations
that are not supercuspidal.
At the same time, since supercuspidal representations
need not have unitary central character,
not all supercuspidal representations are discrete series.
However, when $G$ is a $p$-adic reductive group,
every supercuspidal representation of~$G$
is essentially discrete series,
and we may thus speak of the representation's formal degree.

Since Yu's supercuspidal representations
are compactly induced,
we compute their formal degree using
a general formula for the formal degree
of a compactly induced representation.
As a preliminary step, we define
a natural Hermitian product
on a compactly induced representation.

Let $K$ be an open, compact-mod-$A$ subgroup of~$G$
(this condition is independent of~$A$),
let $(\rho,W)$ be a smooth irreducible unitary
representation of~$K$,
and let $(\pi,V)$ be the representation of~$G$
compactly induced from~$(\rho,W)$.
That is, $V$ is the space of
smooth functions $f:G\to W$
whose support is compact-mod-$K$
and that satisfy $f(hx) = \rho(h)f(x)$
for all $h\in K$ and~$x\in G$.
The representation $(\pi,V)$ is unitary;
in fact, an invariant scalar product
is given by the formula
\begin{equation}
\label{eq19}
\langle f_1,f_2\rangle
= \int_{K\backslash G} \langle f_1(x),f_2(x)\rangle
\dif\mu_{K\backslash G}(x),
\end{equation}
where $\mu_{K\backslash G}$ is any 
positive $G$-invariant Radon measure on~$K\backslash G$,
for instance, the counting measure.

\begin{lemma}
\label{thm20}
Let $(\rho,W)$ be a finite-dimensional unitary representation of~$K$,
let $(\pi,V)$ be the compact induction of~$W$ to~$G$,
and let $\mu$ be a Haar measure on~$G/A$.
If $\pi$ is irreducible then
\[
\deg(\pi,\mu) = \frac{\dim\rho}{\vol(K/(K\cap A),\mu)}.
\]
\end{lemma}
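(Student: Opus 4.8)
The plan is to reduce everything to the standard formula for the formal degree of a compact induction from an open subgroup that is compact modulo the center, and then to bookkeep the Haar measures carefully. First I would recall the orthogonality relations for matrix coefficients of the compactly induced representation $(\pi, V)$. For $f_1, f_2 \in V$ supported on a single coset $Kx_0$, the matrix coefficient $\pi_{f_1, f_2}$ has support contained in $K x_0 (\text{supp of something compact-mod-}K)$, so it is compact modulo $A$ and in particular square-integrable on $G/A$; thus $\pi$ is (essentially) discrete series and $\deg(\pi, \mu)$ is defined. To pin down the constant, it suffices to test the defining identity $|\pi_v|^2_{L^2(G/A, \mu)} = |v|^2 / \deg(\pi, \mu)$ on one convenient vector.

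The convenient choice is a vector $f_0 \in V$ supported on $K$ itself, say $f_0(h) = \rho(h) w_0$ for $h \in K$ and $f_0 = 0$ off $K$, where $w_0 \in W$ is a unit vector; using the scalar product \eqref{eq19} with counting measure on $K \backslash G$ one gets $|f_0|^2 = |w_0|^2 = 1$. Next I would compute the matrix coefficient $\pi_{f_0}(x) = \langle \pi(x) f_0, f_0 \rangle$ explicitly. Unwinding the induced action, $\pi(x) f_0$ is supported on $K x^{-1}$, so the inner product \eqref{eq19} is supported on $x \in K$, where it evaluates to $\langle \rho(x) w_0, w_0 \rangle$, a matrix coefficient of $\rho$ on the compact-mod-$A$ group $K$. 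Therefore
\[
|\pi_{f_0}|^2_{L^2(G/A, \mu)}
= \int_{K/(K \cap A)} |\langle \rho(x) w_0, w_0 \rangle|^2 \, \dif\mu(x)
= \vol(K/(K\cap A), \mu) \cdot \int_{K/(K \cap A)} |\langle \rho(x) w_0, w_0\rangle|^2 \, \dif\bar\mu(x),
\]
where $\bar\mu$ is the probability Haar measure on the compact group $K/(K\cap A)$. Now I invoke \Cref{thm49}: on a compact group with total-mass-one Haar measure, the formal degree of an irreducible unitary representation is its dimension, which is exactly the statement that $\int |\langle \rho(x) w_0, w_0\rangle|^2 \, \dif\bar\mu(x) = |w_0|^2 / \dim\rho = 1/\dim\rho$ (the Schur orthogonality relations, with the same normalization of the invariant Hermitian form on $W$ that enters \eqref{eq19}). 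Combining the two displays gives $|\pi_{f_0}|^2_{L^2(G/A,\mu)} = \vol(K/(K\cap A), \mu)/\dim\rho$, and since $|f_0|^2 = 1$ the definition of the formal degree yields $\deg(\pi, \mu) = \dim\rho / \vol(K/(K\cap A), \mu)$, as claimed.

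The step I expect to require the most care is not conceptual but a matter of normalizations: making sure that the invariant Hermitian form on $W$ appearing in the Schur relation for $\rho$ is literally the same one used in formula \eqref{eq19} to define $|f_0|^2$, and that the counting measure on $K \backslash G$ together with $\mu$ on $G/A$ assembles correctly into the measure $\mu$ on $(K/(K\cap A))\backslash(G/A)$ used to integrate the matrix coefficient (this is the usual quotient-integration/Weil formula, using that $K$ is open so $K\backslash G$ is discrete). Once those compatibilities are nailed down the computation is forced. One should also remark, as the paper does after the definition of formal degree, that both sides scale the same way under $\mu \mapsto c\mu$, which is a useful consistency check: $\vol(K/(K\cap A), c\mu) = c\, \vol(K/(K\cap A),\mu)$ and $\deg(\pi, c\mu) = c^{-1}\deg(\pi,\mu)$ agree with the formula.
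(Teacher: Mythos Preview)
Your proposal is correct and is essentially the same argument as the paper's: you both embed $W$ into $V$ via $w_0\mapsto f_0$ (the paper writes this as $w\mapsto\dot w$), observe that the matrix coefficient $\pi_{f_0}$ is the extension by zero of the matrix coefficient $\rho_{w_0}$, and then reduce to the formal degree of~$\rho$ on the compact group $K/(K\cap A)$, which is $\dim\rho$ for the probability Haar measure. Your write-up is in fact a bit more explicit about the measure-compatibility checks than the paper's, which simply asserts them.
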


\begin{proof}
We start by defining an isometric embedding $W\into V$.
Given a vector $w\in W$, define $\dot w\in V$ by
\[
\dot w(x) = \one_H(x)\rho(x) w.
\]
The space $K\backslash G$ is discrete because $K$ is open,
so we can take the measure~$\mu_{K\backslash G}$
in \cref{eq19} to be the counting measure.
With this choice, the map $W\to V$ defined by $w\mapsto\dot w$
is an isometric embedding.
It follows that the matrix coefficient of~$\dot w$
is the extension by zero of the matrix coefficient of~$w$,
and that their $L^2$-norms coincide provided that
we take the Haar measure on $K/(K\cap A)$
to be the restriction of~$\mu$,
denoted also by~$\mu$.
Then for any nonzero $w\in W$,
\[
\deg(\pi,\mu) = |\dot w|^2\cdot|\pi_{\dot w}|_{L^2(G/A,\mu)}^{-2}
= |w|^2\cdot|\rho_w|_{L^2(K/(K\cap A),\mu)}^{-2}
= \deg(\rho,\mu).
\]
Finally, since the formal degree scales
inversely to the Haar measure used to define it,
\[
\deg(\rho,\mu) = \vol(K/(K\cap A),\mu_0)^{-1}
\deg(\rho,\mu_0)
\]
where $\mu_0$ is the measure on~$K/(K\cap A)$
assigning it total volume one.
The degree of~$\rho$ with respect to this measure
is just the dimension of~$W$.
\end{proof}

\subsection{Concave-function subgroups: split case}
\label{sec:aut:split}
Suppose $\ul G$ is split
with split maximal torus~$\ul T$
and root system $\RS=\RS(\ul G,\ul T)$,
so that $\RS=\underline\RS$.
In this setting, Bruhat and Tits
showed~\cite[Section~6.4]{bruhat_tits72}
how to construct from a function
$f:\RS\cup\{0\}\to\widetilde\R$
and point $x\in\cal A(\ul G,\ul T)$
a subgroup~$G_{x,f}$ of~$G$
and a subgroup~$\frak g_{x,f}$ in~$\frak g$.
In this subsection we review Bruhat
and Tits's construction.
The eventual goal, in later subsections,
is to explain how their construction generalizes
to the construction of subgroups
that appear in Yu's construction of supercuspidals,
and to then study Yu's subgroups.

The definitions of~$G_{x,f}$ and~$\frak g_{x,f}$
are quite natural.
The point~$x$ defines, or ``is'',
depending on one's point of view,
a family of additive valuations
$(v^\alpha_x:\ul U^\alpha(k)\to\R)_{\alpha\in\RS}$,
where $\ul U^\alpha$ is the root group of~$\alpha$.
Since $\ul U^\alpha$ is canonically isomorphic
to the root line~$\frak g^\alpha$,
we may also think of~$v^\alpha_x$ as a valuation
$\frak g^\alpha\to\R$.
Now let
\[
U^\alpha_{x,r} \defeq \{u\in U^\alpha
: v_x^\alpha(u)\geq r\},
\qquad
\frak g^\alpha_{x,r} \defeq \{X\in \frak g^\alpha
: v_x^\alpha(X)\geq r\}.
\]
As for the point $\alpha=0$,
we can think of~$\ul T$ as the root group~$\ul U^0$
and its Lie algebra~$\frak t$ as the root space~$\frak g^0$.
These objects carry their own filtrations:
let
\[
T_r \defeq \{t\in T : \forall \chi\in X^*(T),\;
\ord(\chi(t)-1)\geq r\}
\]
and let
\[
\frak t_r \defeq \{X\in T : \forall \chi\in X^*(T),\;
\ord(\dif\chi(X))\geq r\}.
\]
The objects~$T_r$ and~$\frak t_r$
do not depend on~$x$,
but we reserve the right to denote them by $T_{x,r}$
and~$\frak t_{x,r}$ for uniformity of notation.

\begin{warning}
The group~$G^0$ written here is unrelated
to the zeroth group in a twisted Levi sequence,
even though the notation for the two is the same.
The two notations never appear in the same subsection,
however, so there is little risk of confusion.
\end{warning}

Given a function $f:\RS\cup\{0\}\to\widetilde\R$,
let
\[
U^\alpha_{x,f} \defeq U^\alpha_{x,f(\alpha)},
\qquad
\frak g^\alpha_{x,f} \defeq \frak g^\alpha_{x,f(\alpha)}
\]
for any $\alpha\in\RS\cup\{0\}$.
The group $G_{x,f}$ is then defined as
the subgroup of~$G$ generated by
the subgroups $U^\alpha_{x,f}$
with $\alpha\in\RS\cup\{0\}$,
and the lattice $\frak g_{x,f}$ is defined as
the subgroup of~$\frak g$ spanned by
the subgroups $\frak g^\alpha_{x,f}$
with $\alpha\in\RS\cup\{0\}$.

\begin{remark}
\label{thm7}
When $r=\infty$ the group $U^\alpha_{x,\infty}$ is trivial,
and when in addition $\alpha\neq0$
we can recover the filtrations
on the root groups and root lines
as $\frak g^\alpha_{x,r} = \frak g_{x,f}$
and $U^\alpha_{x,r} = G_{x,f}$ where
\[
f(\beta) = \begin{cases}
r      & \tn{if } \beta=\alpha \\
\infty & \tn{if not.}
\end{cases}
\]
\end{remark}

In order for the construction of~$G_{x,f}$ to behave nicely
we must assume that $f$ is nonnegative
and \emph{concave}, that is, that
for all finite families $(\alpha_i)_{i\in I}$
of elements of~$\RS\cup\{0\}$,
\[
f\Bigl(\sum_{i\in I} \alpha_i\Bigr)
\leq \sum_{i\in I}f(\alpha_i)
\]
whenever $\sum_{i\in I} \alpha_i\in\RS\cup\{0\}$.
We can define~$\frak g_{x,f}$ for any~$f$ whatsoever,
but when $f$ is concave,
$\frak g_{x,f}$ is 
a sub Lie algebra of~$\frak g$.

This completes our discussion of the split case.
We next generalize the split case to the tame case,
a simple exercise in Galois descent.

\begin{remark}
For simplicity of narrative we attributed
the construction of~$G_{x,f}$ to Bruhat and Tits,
but Yu is partially responsible
for the construction, even in the split case.
Bruhat and Tits worked only with functions~$f$
such that $f(0)=0$,
but Yu extended their theory to all~$f$.
\end{remark}

\subsection{Concave-function subgroups: tame case}
\label{sec:aut:tame}.
We no longer assume that $\ul G$ is split,
only that it split over a tamely ramified extension.
Hence we must distinguish between
$\ul R\defeq\ul R(\ul G,\ul T)$
and~$R\defeq R(\ul G,\ul T)$.
In this setting, Yu showed how to generalize
the constructions of the previous subsection,
he constructed for each function
$f:\RS\cup\{0\}\to\widetilde\R$
and point $x\in\cal A(\ul G,\ul T)$,
a subgroup $G_{x,f}$.
In fact, Yu defined his construction
only for a special class of functions,
which we review in the next subsection.
However, for aesthetic reasons,
we found it helpful to review the theory
in this moderately greater generality.

Let $\ell\supseteq k$ be some fixed
tame Galois extension of~$k$
splitting the maximal torus~$\ul T$
and let $\Gamma\!_{\ell/k}$ be the Galois
group of~$\ell$ over~$k$.
A function $f:\underline\RS\cup\{0\}\to\widetilde\R$
can be interpreted as a Galois-invariant function
$\RS\cup\{0\}\to\widetilde\R$,
and we say that $f$ is \emph{concave}
if the associated Galois-invariant function is concave.

Since $f$ is Galois-invariant,
the subgroups~$(\frak g_{\ell})_{x,f}$
and~$\ul G(\ell)_{x,f}$ are Galois-invariant
and we define
\[
\frak g_{x,f} \defeq (\frak g_{\ell})_{x,f}^{\Gamma\!_{\ell/k}},
\qquad
G_{x,f} \defeq \ul G(\ell)_{x,f}^{\Gamma\!_{\ell/k}}.
\]
These groups do not depend on the choice of~$\ell$.
We need not assume that $f$ is concave
to define these objects,
although they are best-behaved in that case.
We can also construct for each~$\ul\alpha$
in $\underline\RS\cup\{0\}$ a root space
\[
\frak g^{\ul\alpha}\defeq\Bigl(\bigoplus_{\alpha\in\alpha(\bar k)}
\frak g_\ell^\alpha\Bigr)^{\Gamma\!_{\ell/k}}.
\]
When $\alpha=0$, the root space $\frak g^0$
is just the Lie algebra of~$\ul T$.
Combining this construction with \cref{thm7},
we see that each root space~$\frak g^{\ul\alpha}$
admits a natural filtration: define
\[
\frak g^{\ul\alpha}_{x,f}=
\Bigl(\bigoplus_{\alpha\in\ul\alpha(\bar k)}
(\frak g_\ell^\alpha)_{x,f}\Bigr)^{\Gamma\!_{\ell/k}}.
\]
It follows immediately from the definitions that
\[
\frak g_{x,f} = \bigoplus_{\ul\alpha\in\underline\RS\cup\{0\}}
\frak g^{\ul\alpha}_{x,f}.
\]
More generally, given a 
subset~$\underline\RS'\subseteq\underline\RS\cup\{0\}$,
let
\[
\frak g^{\underline\RS'} \defeq
\bigoplus_{\ul\alpha\in\underline\RS'} \frak g^{\ul\alpha},
\qquad
\frak g^{\underline\RS'}_{x,f} \defeq
\bigoplus_{\ul\alpha\in\underline\RS'} \frak g^{\ul\alpha}_{x,f}.
\]

\begin{warning}
In comparison to the function of \Cref{thm7},
the function $f:\underline\RS\cup\{0\}\to\widetilde\R$
defined, for a fixed $\ul\alpha\in\underline\RS$, by
\[
f(\ul\beta) = \begin{cases}
r      & \tn{if } \ul\beta=\ul\alpha \\
\infty & \tn{if not}
\end{cases}
\]
is generally not concave.
Failure of concavity relates
to the fact that when $\ul T$ is not split,
there is generally no ``root group''
(or even ``root variety'')~$\ul U^{\ul\alpha}$
whose Lie algebra is~$\frak g^{\ul\alpha}$.
\end{warning}

\begin{remark}
If we assume that $f$ is finite,
or without loss of generality,
that $f$ takes values in~$\R$,
then the group $G_{x,f}$ can be interpreted
as the integral points of an $\cal O$-group:
that is, there is a canonical $\cal O$-group
$\ul G_{x,f}$, an integral model of~$\ul G$,
such that
\[
\ul G_{x,f}(\cal O) = G_{x,f}
\]
as subgroups of~$G$.
The construction is due to Yu
\cite[Section~8]{yu02}.
\end{remark}

Having defined the group~$G_{x,f}$
and Lie algebra~$\frak g_{x,f}$,
we now study them.
There are three areas of interest
for our later applications.

First, in our calculation of the formal degree,
it greatly simplifies notation to reduce
to the case where $G$ has anisotropic center.
We record here the lemma effecting this reduction.

\begin{lemma}
\label{thm9}
Let $\ul G$ be a reductive $k$-group,
let $\ul T$ be a tame maximal torus of~$\ul G$,
let $f:\underline\RS(\ul G,\ul T)\to\widetilde\R$
be a positive concave function,
let $\ul A\subseteq\ul T$ be a split central torus
of~$\ul G$,
and let $y\in\cal A(\ul T,\ul G)$.
Then the groups $G_{y,f}/(A\cap G_{y,f})$
and $(G/A)_{y,f}$ are identical subgroups of~$G/A$.
\end{lemma}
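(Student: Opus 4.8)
The plan is to reduce to the case where $\ul G$ is split, where the assertion is visible from the root-group description of concave-function subgroups, and then to descend along a tame splitting field of~$\ul T$. First I would record the elementary compatibilities. Because $\ul A$ is central, each root of $\ul G$ relative to~$\ul T$ is trivial on~$\ul A$ and so descends to a root of $\ul G/\ul A$ relative to~$\ul T/\ul A$; this gives a canonical identification $\RS(\ul G,\ul T)=\RS(\ul G/\ul A,\ul T/\ul A)$ of Galois lattices, so that $f$ is equally a function on the roots of~$\ul G/\ul A$. The quotient map $q\colon\ul G\to\ul G/\ul A$ carries $\cal A(\ul G,\ul T)$ onto $\cal A(\ul G/\ul A,\ul T/\ul A)$ with fibres the cosets of $X_*(\ul A)\otimes\R$, and I write~$y$ also for the image of~$y$. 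Finally, since $\ul A$ is split, $\tn H^1(k,\ul A)=0$ by Hilbert's Theorem~90, so $q$ is surjective on $k$-points with kernel~$A$; thus $G/A$ is literally the quotient group, $q|_{G_{y,f}}$ has kernel $A\cap G_{y,f}$ automatically, and all that must be shown is $q(G_{y,f})=(G/A)_{y,f}$.

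Next I would treat the split case: suppose $\ul G$ is split over the base field. On each root group $q$ restricts to an isomorphism compatible with the valuations cut out by~$y$, hence carries $U^\alpha_{y,f}$ onto the analogous filtration subgroup for~$\ul G/\ul A$. On the torus part, the cocharacter-lattice sequence $0\to X_*(\ul A)\to X_*(\ul T)\to X_*(\ul T/\ul A)\to0$ splits, because $X_*(\ul T/\ul A)$ is free, so $\ul T\cong\ul A\times\ul T/\ul A$ and $q$ carries $T_{f(0)}$ onto $(T/A)_{f(0)}$ with kernel~$A_{f(0)}$. As $G_{y,f}$ is generated by the $U^\alpha_{y,f}$ together with~$T_{f(0)}$, and $q$ carries this generating set onto the corresponding generating set of~$(G/A)_{y,f}$, we get $q(G_{y,f})=(G/A)_{y,f}$; and by the Bruhat--Tits factorization, using $f\ge0$, one has $G_{y,f}\cap\ul T=T_{f(0)}$, so $A\cap G_{y,f}=A_{f(0)}$.

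For the descent I would pick a tame Galois extension $\ell/k$ splitting~$\ul T$, hence also~$\ul T/\ul A$, so that $G_{y,f}=\ul G(\ell)_{y,f}^{\Gamma}$ and $(G/A)_{y,f}=(\ul G/\ul A)(\ell)_{y,f}^{\Gamma}$ with $\Gamma=\Gamma_{\ell/k}$. The split case over~$\ell$ gives a central short exact sequence of $\Gamma$-groups
\[
1\longrightarrow\ul A(\ell)_{f(0)}\longrightarrow\ul G(\ell)_{y,f}\xrightarrow{\ q\ }(\ul G/\ul A)(\ell)_{y,f}\longrightarrow1,
\]
whose cohomology sequence
\[
1\to\bigl(\ul A(\ell)_{f(0)}\bigr)^{\Gamma}\to G_{y,f}\xrightarrow{\ q\ }(G/A)_{y,f}\xrightarrow{\ \delta\ }\tn H^1\bigl(\Gamma,\ul A(\ell)_{f(0)}\bigr)
\]
identifies $\bigl(\ul A(\ell)_{f(0)}\bigr)^{\Gamma}$ with $A\cap G_{y,f}$ and reduces the lemma to the vanishing of the connecting homomorphism~$\delta$. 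Since $\ell/k$ is tame, $\#\Gamma$ is prime to~$p$, so $\tn H^1(\Gamma,-)$ kills pro-$p$ coefficient groups; hence $\delta$ annihilates the pro-$p$ subgroup $(G/A)_{y,f}\cap(G/A)_{y,0+}$ and factors through the finite quotient $(G/A)_{y,f}\big/\bigl((G/A)_{y,f}\cap(G/A)_{y,0+}\bigr)$, which lies inside the reductive-quotient group $(G/A)_{y,0:0+}$. There $q$ is induced by a morphism of $\kappa$-groups whose kernel is the reduction of the split torus~$\ul A$, hence connected, so it is surjective on $\kappa$-points by Lang's theorem; therefore $\delta=0$. (The parallel identity $\frak g_{y,f}/(\frak a\cap\frak g_{y,f})=(\frak g/\frak a)_{y,f}$ for Lie algebras follows the same way and is easier, the obstruction there lying in the additive group $\tn H^1(\Gamma,\frak a(\ell)_{f(0)})$, which vanishes by tameness.)

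The one step that is not formal bookkeeping is the vanishing of~$\delta$ in the descent; the substance is in the torus direction, and the essential input is tameness of the splitting extension~$\ell/k$, which at once makes the relevant pro-$p$ Galois cohomology vanish and, via Lang's theorem, forces the residual reductive-quotient map to be surjective on points.
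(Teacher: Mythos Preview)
Your argument is correct and follows the same two-step strategy as the paper's (which simply says: split case via Hilbert~90, then take Galois invariants). One remark: since $f$ is assumed positive, in particular $f(0)>0$, the coefficient module $\ul A(\ell)_{f(0)}$ is already pro-$p$, so $\tn H^1(\Gamma,\ul A(\ell)_{f(0)})=0$ outright by tameness and $\delta=0$ with no further work; your detour through the reductive quotient and Lang's theorem is superfluous here (and, were one actually in the case $f(0)=0$, that step as written would need more care, since surjectivity on reductive quotients only produces a lift in $G_{y,0}$, not in $G_{y,f}$).
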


\begin{proof}
This follows in the split case using Hilbert's
Theorem~90, and the general case follows
immediately from the split case by taking
Galois invariants.
\end{proof}

Second, when we compute certain subgroup indices
in \Cref{sec:aut:comp},
we need to understand how to intersect groups
of the form $G_{x,f}$, for fixed~$x$.

\begin{lemma}
\label{thm48}
Let $f,g:\ul R\cup\{0\}\to\widetilde\R$
be positive concave functions.
Then
\[
G_{x,f}\cap G_{x,g} = G_{x,\max(f,g)}.
\]
\end{lemma}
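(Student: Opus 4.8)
The plan is to reduce to the split case and then compare generating sets of root-group filtrations. First I would pass to a tame Galois splitting extension $\ell/k$ for $\ul T$: by definition $G_{x,f} = \ul G(\ell)_{x,f}^{\Gamma\!_{\ell/k}}$ and similarly for $g$ and for $\max(f,g)$, and since taking $\Gamma\!_{\ell/k}$-invariants commutes with intersection, it suffices to prove the identity $\ul G(\ell)_{x,f}\cap \ul G(\ell)_{x,g} = \ul G(\ell)_{x,\max(f,g)}$ in the split group. So I may assume $\ul G$ split with split maximal torus~$\ul T$.

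In the split case, recall that $G_{x,f}$ is generated by the root subgroups $U^\alpha_{x,f(\alpha)}$ for $\alpha\in\RS\cup\{0\}$, where $U^\alpha_{x,r} = \{u\in U^\alpha : v^\alpha_x(u)\geq r\}$ (and $T_r$ plays the role of $\alpha=0$). The inclusion $G_{x,\max(f,g)}\subseteq G_{x,f}\cap G_{x,g}$ is immediate from $\max(f,g)\geq f$ and $\max(f,g)\geq g$ together with the monotonicity $U^\alpha_{x,r}\subseteq U^\alpha_{x,s}$ for $r\geq s$; this uses nothing about concavity. For the reverse inclusion, the natural tool is the Iwahori-type factorization available for concave-function subgroups in Bruhat--Tits theory: an element of $G_{x,f}$ can be written uniquely as an ordered product $\prod_{\alpha>0} u_\alpha \cdot t \cdot \prod_{\alpha<0} u_\alpha$ with $u_\alpha\in U^\alpha_{x,f(\alpha)}$, $t\in T_{f(0)}$, the products taken in a fixed order (this is exactly where nonnegativity and concavity of $f$ are used — they guarantee that $G_{x,f}$ is a group with this clean decomposition, via \cite[6.4]{bruhat_tits72}). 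Given $h\in G_{x,f}\cap G_{x,g}$, expand $h$ in the factorization for $G_{x,f}$ and in the factorization for $G_{x,g}$; by the uniqueness of the factorization (applied in the ambient group $G_{x,\min(f,g)}$, say, which contains both) the two expansions have the same components, so each $u_\alpha$ lies in $U^\alpha_{x,f(\alpha)}\cap U^\alpha_{x,g(\alpha)} = U^\alpha_{x,\max(f(\alpha),g(\alpha))} = U^\alpha_{x,\max(f,g)(\alpha)}$, and likewise $t\in T_{\max(f(0),g(0))}$. Hence $h\in G_{x,\max(f,g)}$.

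The one point requiring care — and the main obstacle — is justifying that $\max(f,g)$ is again a concave function when $f$ and $g$ are, so that $G_{x,\max(f,g)}$ is itself a well-behaved group and the factorization argument applies to it. This is true: if $\sum_{i\in I}\alpha_i\in\RS\cup\{0\}$ then $\max(f,g)(\sum\alpha_i) = \max\!\big(f(\sum\alpha_i),\,g(\sum\alpha_i)\big) \leq \max\!\big(\sum f(\alpha_i),\,\sum g(\alpha_i)\big) \leq \sum\max(f(\alpha_i),g(\alpha_i)) = \sum\max(f,g)(\alpha_i)$, where the last inequality is the elementary fact that $\max(\sum a_i,\sum b_i)\leq\sum\max(a_i,b_i)$ for nonnegative reals. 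With concavity of $\max(f,g)$ in hand, everything above goes through, and taking $\Gamma\!_{\ell/k}$-invariants recovers the statement over~$k$.
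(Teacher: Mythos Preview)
Your approach is essentially the same as the paper's: reduce to the split case by tame descent, invoke the product decomposition (the bijection $\prod_{\alpha} U^\alpha_{x,f(\alpha)} \to G_{x,f}$ from \cite[(6.4.48)]{bruhat_tits72} and \cite[8.3.1]{yu02}), and conclude from the root-group identity $U^\alpha_{x,r}\cap U^\alpha_{x,s} = U^\alpha_{x,\max(r,s)}$. Your explicit verification that $\max(f,g)$ is again concave is a helpful addition.

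One small correction: you compare the two factorizations of $h$ inside $G_{x,\min(f,g)}$, but $\min(f,g)$ need not be concave (the inequality $\min(\sum a_i,\sum b_i)\leq\sum\min(a_i,b_i)$ is false in general), so that group may not have the required product decomposition. The fix is easy: since $f,g$ are positive, take instead any positive constant function $c\leq f,g$ (constant nonnegative functions are concave), or simply $G_{x,0+}$, as the ambient group in which uniqueness of the factorization is applied. With that adjustment your argument is complete.
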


\begin{proof}
In the case where $G$ is split,
a classical result of Bruhat and Tits
\cite[(6.4.48)]{bruhat_tits72}
can be used to show \cite[8.3.1]{yu02}
that the natural multiplication map
\[
\prod_{\alpha\in R\cup\{0\}}
U^\alpha_{x,f(\alpha)} \to G_{x,f}
\]
is a bijection for a certain ordering of the factors.
Using tame descent, this observation
reduces the proof to the obvious fact 
(still in the split case) that
$U^\alpha_{x,r}\cap U^\alpha_{x,s}
= U^\alpha_{x,\max(r,s)}$.
\end{proof}

Third and lastly,
we compare subgroup indices between $G$ and~$\frak g$,
generalizing the Moy-Prasad isomorphism.
In \Cref{sec:aut:comp},
this comparison reduces a volume computation
to a length computation,
which we carry out in \Cref{sec:aut:len}.

\begin{lemma}
\label{thm8}
Let $f,g:\underline\RS\cup\{0\}\to\widetilde\R$
be positive concave functions such that $f\leq g$.
Assume in addition that the following condition
is satisfied:
\[
g(a) \leq \sum_{i\in I} f(a_i) + \sum_{j\in J} f(b_j)
\]
for all non-empty finite sequences
$(a_i)_{i\in I}$ and~$(b_j)_{j\in J}$
of elements of $\underline\RS\cup\{0\}$
such that $a\defeq\sum_{i\in I}a_i + \sum_{j\in J}b_j
\in\underline\RS\cup\{0\}$.
Then
\begin{enumerate}
\item
$[G_{x,f},G_{x,f}]\subseteq G_{x,g}$,
so that the group $G_{x,f:g}$ is abelian, and
\item
there is a canonical isomorphism
$\frak g_{x,f:g}\simeq G_{x,f:g}$ of abelian groups.
\end{enumerate}
\end{lemma}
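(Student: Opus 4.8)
The plan is to reduce both assertions to the split case, where everything can be checked on root groups, and then descend by Galois invariance exactly as in the proofs of \Cref{thm9,thm48}. So first I would pass to a tame splitting field $\ell$ of $\ul T$; since $f\leq g$ both satisfy the hypotheses after extending scalars, and taking $\Gamma\!_{\ell/k}$-invariants commutes with the formation of commutator subgroups and with the Moy--Prasad isomorphism on the split group, it suffices to treat $\ul G$ split with split maximal torus $\ul T$ and $\ul R=\ul{\ul R}$.

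In the split case I would use the Bruhat--Tits product decomposition invoked in the proof of \Cref{thm48}: for any positive concave $f$ the multiplication map $\prod_{\alpha\in R\cup\{0\}} U^\alpha_{x,f(\alpha)}\to G_{x,f}$ is a bijection for a suitable ordering of the factors. For part~(1), a commutator $[u,v]$ of root-group elements lies, by the Chevalley commutator formula, in the group generated by the $U^{i\alpha+j\beta}_{x,\,if(\alpha)+jf(\beta)}$ with $i,j\geq1$; the hypothesis $g(a)\leq\sum f(a_i)+\sum f(b_j)$ (applied with $I=\{\alpha\}$, $J=\{\beta\}$ and more generally to iterated brackets) is precisely what is needed to conclude that each such factor sits inside $G_{x,g}$, and hence that $[G_{x,f},G_{x,f}]\subseteq G_{x,g}$; that $G_{x,f:g}$ is abelian is then immediate. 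For part~(2), the product decomposition gives a bijection $G_{x,f:g}\xrightarrow{\sim}\prod_\alpha U^\alpha_{x,f(\alpha):g(\alpha)}$, and likewise $\frak g_{x,f:g}\xrightarrow{\sim}\bigoplus_\alpha\frak g^\alpha_{x,f(\alpha):g(\alpha)}$; since for each fixed $\alpha$ the root group $U^\alpha$ is canonically isomorphic to the root line $\frak g^\alpha$ (compatibly with the valuations $v^\alpha_x$), these two sets are canonically identified, and because both sides are abelian one checks the identification is a group isomorphism. The point $\alpha=0$ is handled separately but identically, using the canonical isomorphism $T_{r:s}\simeq\frak t_{r:s}$ for $0<r\leq s$, which holds because $\ord(\chi(t)-1)$ and $\ord(\dif\chi(X))$ differ by higher-order terms that vanish in the subquotient.

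The main obstacle will be verifying that the isomorphism of part~(2) really is a \emph{group} homomorphism and not merely a bijection of sets, and that it is canonical (independent of the ordering chosen in the product decomposition). The first issue is where the abelianness established in part~(1) does the work: on the subquotient $G_{x,f:g}$ the multiplication is componentwise in the $U^\alpha$-coordinates precisely because all the commutator corrections land in $G_{x,g}$, so transport of structure through the root-group--root-line isomorphisms is multiplicative. For canonicity, one observes that any two admissible orderings of the factors differ by conjugations lying in $G_{x,f}$, which act trivially on the abelian subquotient; alternatively one can characterize the isomorphism intrinsically as the unique one restricting to the canonical root-line identification on each $\frak g^\alpha_{x,f:g}$, which is manifestly ordering-independent. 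Once these points are settled, descent to the non-split case is formal: the Galois group acts compatibly on both sides through its permutation of the roots and its action on $\ell$, so taking invariants yields the canonical isomorphism $\frak g_{x,f:g}\simeq G_{x,f:g}$ over $k$.
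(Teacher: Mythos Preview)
Your sketch is correct and follows the same route the paper indicates: the paper's proof is simply a citation to \cite[6.4.44, 6.4.48]{bruhat_tits72} for the split case and \cite[Section~2]{yu02} for tame descent, and what you have written is precisely an unpacking of those references (product decomposition, Chevalley commutator relations, root-by-root Moy--Prasad isomorphism, then Galois invariants). One small wording issue: ``taking $\Gamma_{\ell/k}$-invariants commutes with the formation of commutator subgroups'' is not true on the nose and not what you need; for part~(1) you only need that a commutator of two $\Gamma_{\ell/k}$-fixed elements is $\Gamma_{\ell/k}$-fixed, which is trivial, so the inclusion over~$\ell$ immediately restricts to the inclusion over~$k$.
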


\begin{proof}
This follows from~\cite[6.4.44]{bruhat_tits72}
and~\cite[6.4.48]{bruhat_tits72}
in the split case
and \cite[Section~2]{yu02} in general.
\end{proof}

\begin{remark}
In \Cref{thm8},
it is tempting to instead impose the simpler condition
that $g(a) \leq \sum_{i\in I} f(a_i)$
for all non-empty finite sequences
$(a_i)_{i\in I}$ such that $\sum_{i\in I}a_i = a$.
However, this stronger condition would
significantly weaken the conclusion:
the condition implies, by taking the constant sequence,
that $g\leq f$, so that $g=f$.
\end{remark}

\begin{corollary}
\label{thm56}
Let $f,g:\underline\RS\cup\{0\}\to\R$
be positive concave functions such that $f\leq g$,
and suppose there is a chain of concave functions
\[
f = f_0 \leq f_1 \leq \cdots \leq f_n = g
\]
such that for each~$i$ with $1\leq i\leq n$,
the pair $(f_{i-1},f_i)$ satisfies
the conditions of \cref{thm8}.
Then
\[
|G_{x,f:g}| = |\frak g_{x,f:g}|.
\]
\end{corollary}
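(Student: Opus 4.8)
The plan is to telescope \Cref{thm8} along the given chain. First I would record the (immediate) monotonicity of the concave-function subgroups in the function: since each $U^\alpha_{x,r}$ and each $\frak g^\alpha_{x,r}$ shrinks as $r$ grows, the inequalities $f = f_0 \le f_1 \le \cdots \le f_n = g$ produce descending chains of subgroups
\[
G_{x,f} = G_{x,f_0} \supseteq G_{x,f_1} \supseteq \cdots \supseteq G_{x,f_n} = G_{x,g},
\qquad
\frak g_{x,f} = \frak g_{x,f_0} \supseteq \cdots \supseteq \frak g_{x,f_n} = \frak g_{x,g}.
\]
Because $f$ and $g$, hence all the $f_i$, take values in $\R$, each subquotient $\frak g_{x,f_{i-1}:f_i}$ is finite (it is a quotient of two $\cal O$-lattices of equal rank), so by the isomorphism in \Cref{thm8}(2) each coset space $G_{x,f_{i-1}:f_i}$ is a finite abelian group as well; in particular every inclusion displayed above is of finite index.

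Next I would invoke multiplicativity of the index in a tower of subgroups (which requires no normality) on the group side, and additivity of length — equivalently multiplicativity of cardinality of finite $\cal O$-modules, per the identity $\exp_q\len M = |M|$ — on the Lie algebra side, to obtain
\[
|G_{x,f:g}| = \prod_{i=1}^n |G_{x,f_{i-1}:f_i}|,
\qquad
|\frak g_{x,f:g}| = \prod_{i=1}^n |\frak g_{x,f_{i-1}:f_i}|.
\]
By hypothesis each consecutive pair $(f_{i-1},f_i)$ satisfies the conditions of \Cref{thm8}, so part (2) of that lemma supplies an isomorphism $\frak g_{x,f_{i-1}:f_i}\simeq G_{x,f_{i-1}:f_i}$, whence $|G_{x,f_{i-1}:f_i}| = |\frak g_{x,f_{i-1}:f_i}|$ for every~$i$. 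Multiplying these equalities over~$i$ and comparing the two displayed products yields $|G_{x,f:g}| = |\frak g_{x,f:g}|$.

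I do not expect a genuine obstacle here: all the substance lives in \Cref{thm8}, and the corollary is just its telescoped form — the point of interpolating a chain is precisely that $(f,g)$ itself need not satisfy the strong hypothesis of that lemma. The only steps deserving a word of care are the monotonicity observation, needed so that a chain of functions yields honest chains of subgroups, and the finiteness of the relevant subquotients, which lets one apply tower multiplicativity of the index without set-theoretic bookkeeping; both are routine.
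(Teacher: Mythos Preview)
Your proposal is correct and matches the paper's approach: the paper states this as an immediate corollary of \Cref{thm8} with no separate proof, and the telescoping argument you spell out is exactly the intended (and only natural) way to deduce it. The care you take with monotonicity and finiteness is appropriate but routine, which is why the paper omits it.
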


\subsection{Yu's groups}
\label{sec:aut:yu_grp},
In our application,
it is enough to work with the subgroups
constructed from a certain restricted
class of concave functions,
those constructed from admissible sequences
and tame twisted Levi sequences.
The construction specializes
that of \Cref{sec:aut:tame},
so we work in the same setting.
After reviewing Yu's construction
of these groups,
we specialize the theory
of \Cref{sec:aut:tame}
to show that they admit
a Moy-Prasad isomorphism.

A sequence $\vec r = (r_i)_{i=0}^d$ in~$\widetilde\R$
is \emph{admissible} if there is some~$j$
with $0\leq j\leq d$ such that
$0\leq r_0 = \cdots = r_j$ and
$\tfrac12 r_j \leq r_{j+1}\leq\cdots r_d$.
The admissible sequence is \emph{weakly increasing}
if, in addition, $r_i \leq r_{i+1}$
for all~$i$ with~$0\leq i\leq d-1$.

Recall the definition of
a (tame) twisted Levi sequence~$\vec{\ul G}$
from \Cref{sec:llc:reps}.
We assume that the tame maximal torus~$\ul T$
of~$\ul G$ is contained in~$\ul G^0$, so that
$x\in\cal B(\ul G^i)$ for each~$i$.
For each $i$ with $0\leq i\leq d$,
let $\underline\RS_i\defeq\underline\RS(\ul G^i,\ul T)$.

It is sometimes necessary to work
with truncated twisted Levi sequences.
Given integers $a$ and~$b$ such that $0\leq a\leq b\leq d$,
let
\[
\vec{\ul G}^{[a,b]}\defeq
(\ul G^a\subsetneq\cdots\subsetneq\ul G^b);
\]
given an integer~$i$ with $0\leq i\leq d$, define
$\vec{\ul G}^{(i)} \defeq \vec{\ul G}^{[0,i]}$.

Given an admissible sequence~$\vec r$
and a twisted Levi sequence $\vec{\ul G}$
of the same length~$d$,
define the function $f_{\vec r}:\underline\RS\to\widetilde\R$ by
\[
f_{\vec r}(\ul\alpha) \defeq \begin{cases}
r_0 & \tn{if } \ul\alpha\in\ul\RS_0\cup\{0\} \\
r_i & \tn{if } \ul\alpha\in\ul\RS_i\setminus\ul\RS_{i-1},\, 1\leq i\leq d.
\end{cases}
\]
Since $\vec r$ is admissible,
the function $f_{\vec r}$ is
concave~\cite[Lemma~1.2]{yu01}.
Hence we may define
\[
\vec G_{x,\vec r}
\defeq G_{x,f_{\vec r}},
\qquad
\vec{\frak g}_{x,\vec r}
\defeq \frak g_{x,f_{\vec r}}.
\]
Given a second admissible sequence~$\vec s$
of length~$d$ with $r_i\leq s_i$ for all~$i$,
define
\[
\vec G_{x,\vec r:\vec s}
\defeq G_{x,f_{\vec r}:f_{\vec s}},
\qquad
\vec{\frak g}_{x,\vec r:\vec s}
\defeq \frak g_{x,f_{\vec r}:f_{\vec s}}.
\]

\begin{remark}
The group $\vec G_{x,\vec r}$
depends only on $\vec{\ul G}$,
$x$, and~$\vec r$;
in particular, it is independent
of the choice of torus~$T$,
provided that $x\in\cal B(T)$
\cite[Section~1]{yu01}.
The same cannot necessarily be said
for a general group of the form $G_{x,f}$,
however, because the domain
of definition of~$f$ knows something about
the torus~$T$, namely,
the Galois action on the root system.
This Galois action can vary among maximal tori whose
buildings are identical subsets of~$\cal B(G)$.
\end{remark}

In this setting, Yu generalized
\cite[Lemma~1.3 and Corollary~2.4]{yu01}
the Moy-Prasad isomorphism.

\begin{lemma}
\label{thm10}
Let $\vec{\ul G}$ be a tame Levi sequence of length~$d$
and let $\vec r$ and~$\vec s$ be admissible sequences
of length~$d$ such that for all~$i$,
\begin{equation}
\label[condition]{eq14}
0 < r_i \leq s_i \leq
\min(r_i,\dots,r_d) + \min(\vec r).
\end{equation}
Then $\vec G_{x,\vec r:\vec s}$
is an abelian group canonically isomorphic
to $\vec{\frak g}_{x,\vec r:\vec s}$.
\end{lemma}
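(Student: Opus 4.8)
The plan is to deduce the \lcnamecref{thm10} directly from \cref{thm8}, applied to the pair of functions $f\defeq f_{\vec r}$ and $g\defeq f_{\vec s}$ on $\ul\RS\cup\{0\}$; since $\vec G_{x,\vec r:\vec s}=G_{x,f:g}$ and $\vec{\frak g}_{x,\vec r:\vec s}=\frak g_{x,f:g}$ by definition, it suffices to check the hypotheses of \cref{thm8}. Three of them are immediate. Positivity of $f$ and $g$ follows from $0<r_i\leq s_i$ in \cref{eq14}; the inequality $f\leq g$ holds because $f$ and $g$ take the values $r_i$ and $s_i$ on the same roots and $r_i\leq s_i$; and $f$, $g$ are concave because $\vec r$, $\vec s$ are admissible, by \cite[Lemma~1.2]{yu01}. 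The one substantive point is the remaining hypothesis: for all non-empty finite sequences $(a_i)_{i\in I}$ and $(b_j)_{j\in J}$ of elements of $\ul\RS\cup\{0\}$ whose total sum $a$ again lies in $\ul\RS\cup\{0\}$, one must show $g(a)\leq\sum_i f(a_i)+\sum_j f(b_j)$.

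To verify this I would pass to the absolute root system, where the concavity and the \cref{thm8}-type condition on $f$, $g$ are, by definition, conditions on the associated Galois-invariant functions $\RS\cup\{0\}\to\widetilde\R$, and where each $\ul\RS_i$ becomes the \emph{Levi} subsystem $\RS_i=\RS(\ul G^i_{\bar k},\ul T_{\bar k})=\RS\cap\operatorname{span}_{\mathbb Q}(\RS_i)$ --- here one uses that the members of $\vec{\ul G}$ are twisted Levi subgroups. For $\ul\alpha\in\RS\cup\{0\}$ write $d(\ul\alpha)$ for the least index $i$ with $\ul\alpha\in\RS_i\cup\{0\}$, so that $f(\ul\alpha)=r_{d(\ul\alpha)}$ and $g(\ul\alpha)=s_{d(\ul\alpha)}$. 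Let $m$ be the maximum of $d$ over the (at least two) terms $a_i$, $b_j$. Each term lies in $\operatorname{span}_{\mathbb Q}(\RS_m)$, hence so does $a$, and since $\RS_m$ is a Levi subsystem this forces $a\in\RS_m\cup\{0\}$, i.e.\ $d(a)\leq m$. Now \cref{eq14} at the index $d(a)$ gives
\[
g(a)=s_{d(a)}\leq\min(r_{d(a)},\dots,r_d)+\min(\vec r)\leq r_m+\min(\vec r),
\]
the last step because $d(a)\leq m\leq d$. Conversely, some term has $d$-value exactly $m$ and so contributes $r_m$ to $\sum_i f(a_i)+\sum_j f(b_j)$, at least one further term contributes at least $\min(\vec r)$, and all contributions are nonnegative; hence $\sum_i f(a_i)+\sum_j f(b_j)\geq r_m+\min(\vec r)\geq g(a)$, as needed.

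Granting the hypotheses, part~(1) of \cref{thm8} gives that $\vec G_{x,\vec r:\vec s}$ is abelian and part~(2) supplies the canonical isomorphism with $\vec{\frak g}_{x,\vec r:\vec s}$. The only genuine obstacle is the step $d(a)\leq m$: this is exactly where the twisted-Levi hypothesis on $\vec{\ul G}$ enters, guaranteeing that each $\RS_i$ is closed under forming a root that happens to be a $\mathbb Q$-linear combination of its elements, something a general closed subsystem need not satisfy. (One could alternatively connect $\vec r$ to $\vec s$ by a chain of admissible sequences and appeal to \cref{thm56}, but the bound in \cref{eq14} is precisely what makes the single pair $(f_{\vec r},f_{\vec s})$ satisfy \cref{thm8} outright, so no such chain is required here.)
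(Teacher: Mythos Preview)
Your proposal is correct and follows exactly the paper's approach: the paper's proof consists of the single sentence ``\cref{eq14} implies that the pair $(f_{\vec r},f_{\vec s})$ satisfies the conditions of \cref{thm8},'' and you have simply supplied the verification that the paper leaves to the reader. Your argument via the level function $d(\ul\alpha)$ and the Levi-closure property $d(a)\leq m$ is the natural way to unpack that one-line claim.
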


\begin{proof}
\cref{eq14} implies that the pair
$(f_{\vec r},f_{\vec s})$ satisfies
the conditions of \cref{thm8}.
\end{proof}

\begin{corollary}
\label{thm57}
Let $\vec{\ul G}$ be a tame Levi sequence
of length~$d$ and
let $\vec r$ and~$\vec s$ be weakly increasing 
admissible sequences of length~$d$ such that
$0 < r_i \leq s_i < \infty$ for all~$i$.
Then
\[
|\vec G_{x,\vec r:\vec s}|
= |\vec{\frak g}_{x,\vec r:\vec s}|.
\]
\end{corollary}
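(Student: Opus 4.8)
The plan is to deduce \Cref{thm57} from \Cref{thm56} by interpolating between~$\vec r$ and~$\vec s$ a chain of weakly increasing admissible sequences, each step of which is small enough to satisfy \cref{eq14}. A useful preliminary remark is that a weakly increasing sequence $(t_i)_{i=0}^d$ with $t_0\geq0$ is automatically admissible: if $t_0=\cdots=t_j$ is its initial plateau, then either $j=d$, and there is nothing further to check, or $t_{j+1}>t_j$, in which case $\tfrac12 t_j\leq t_j<t_{j+1}\leq\cdots\leq t_d$. This lets us manufacture the intermediate sequences freely.

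First I would set $\delta\defeq r_0=\min(\vec r)>0$ and define, for each integer $m\geq0$, the length-$d$ sequence $\vec t^{(m)}$ by $t^{(m)}_i\defeq\min(s_i,\,r_i+m\delta)$. As the pointwise minimum of the weakly increasing nonnegative sequences $i\mapsto s_i$ and $i\mapsto r_i+m\delta$, each $\vec t^{(m)}$ is weakly increasing, hence admissible, with positive finite entries, and its minimum is $t^{(m)}_0=\min(s_0,(m+1)\delta)\geq\delta$ since $s_0\geq r_0=\delta$. Because $r_i\leq s_i$ we have $\vec t^{(0)}=\vec r$, and choosing $n$ large enough that $r_i+n\delta\geq s_i$ for every~$i$ -- possible since there are finitely many indices and every $s_i$ is finite -- gives $\vec t^{(n)}=\vec s$.

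Next I would check that each consecutive pair $(\vec t^{(m)},\vec t^{(m+1)})$ satisfies \cref{eq14}. Since $\vec t^{(m)}$ is weakly increasing, that condition reduces for this pair to $t^{(m)}_i\leq t^{(m+1)}_i\leq t^{(m)}_i+t^{(m)}_0$; the left inequality is immediate, and for the right one, a short case check -- according to whether $t^{(m+1)}_i$ equals $s_i$ or $r_i+(m+1)\delta$ -- gives $t^{(m+1)}_i\leq t^{(m)}_i+\delta\leq t^{(m)}_i+t^{(m)}_0$, using $t^{(m)}_0\geq\delta$ at the last step. By the argument in the proof of \Cref{thm10}, the pair $(f_{\vec t^{(m)}},f_{\vec t^{(m+1)}})$ then satisfies the hypotheses of \Cref{thm8}. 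Since $f_{\vec r}=f_{\vec t^{(0)}}\leq f_{\vec t^{(1)}}\leq\cdots\leq f_{\vec t^{(n)}}=f_{\vec s}$ is a chain of positive, finite-valued concave functions -- concavity coming from admissibility via \cite[Lemma~1.2]{yu01} -- \Cref{thm56} yields $|\vec G_{x,\vec r:\vec s}|=|G_{x,f_{\vec r}:f_{\vec s}}|=|\frak g_{x,f_{\vec r}:f_{\vec s}}|=|\vec{\frak g}_{x,\vec r:\vec s}|$.

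The one point that has to be got right -- the main obstacle, modest as it is -- is the design of the interpolating sequences. Each entry must be capped at~$s_i$, so that the sequences stay below~$\vec s$ while eventually reaching it, and the increments must have size exactly $\delta=\min(\vec r)$, the largest step \cref{eq14} allows: a single step from~$\vec r$ to~$\vec s$, or any larger increments, would in general violate \cref{eq14}. Once the step size is pinned down correctly, everything else is bookkeeping.
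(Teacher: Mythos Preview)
Your proof is correct and follows the same approach as the paper's: both reduce to \Cref{thm56} by constructing an interpolating chain of weakly increasing admissible sequences whose consecutive pairs satisfy \cref{eq14}. The paper merely asserts that building such a chain is ``an elementary but tedious exercise'' and does not carry it out, whereas you give the explicit construction $t^{(m)}_i=\min(s_i,\,r_i+m\delta)$ with $\delta=r_0$ and verify the needed inequalities, so your argument is in fact more complete than the paper's own.
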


\begin{proof}
Since $\vec r$ is weakly increasing,
\cref{eq14} simplifies
to the condition that
\begin{equation}
s_i\in[r_i,r_i+r_0] \quad \tn{ for all~$i$.}
\end{equation}
It is now an elementary but tedious exercise
to construct a chain $(\vec s\,^{(j)})_{0\leq j\leq N}$
of weakly increasing admissible sequences
$\vec s\,^{(j)}=(s^{(j)}_0\leq\cdots\leq s^{(j)}_d)$,
where $N\gg0$, such that
$s^{(0)}=\vec r$ and $s^{(d)}=\vec s$,
such that $s^{(j)}_i \leq s^{(j+1)}_i$
for all $i$ and~$j$ with $0\leq j\leq N-1$,
and such that each pair
$(\vec s\,^{(j-1)},\vec s\,^{(j)})$
satisfies \cref{eq14} of \Cref{thm10}.
After completing this exercise,
we invoke \Cref{thm56}.
\end{proof}

We need the \lcnamecref{thm57} in the following special case only.

\begin{corollary}
\label{thm15}
Let $\vec{\ul G}$ be a tame Levi sequence
of length~$d$ and let $\vec r$ be a weakly
increasing admissible sequence of length~$d$.
Then
\[
|\vec G_{x,0+:\vec r}| = |\vec{\frak g}_{x,0+:\vec r}|.
\]

\end{corollary}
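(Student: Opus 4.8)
The plan is to read off \Cref{thm15} from \Cref{thm57}, applied with the constant sequence $(0+,\dots,0+)$ playing the role of~$\vec r$ and the given sequence~$\vec r$ playing the role of~$\vec s$. There is one wrinkle: \Cref{thm57} does not literally apply when the lower sequence is $(0+,\dots,0+)$, because the chain construction in its proof cannot get off the ground there -- the pair $(f_{(0+,\dots,0+)},f_{\vec s})$ fails the hypotheses of \Cref{thm8} unless $\vec s$ too is identically~$0+$, since $0+ + 0+ = 0+$ in~$\widetilde\R$ -- and in any event \Cref{thm56} is phrased for $\R$-valued functions, whereas $0+\notin\R$. The remedy is to replace the lower index $0+$ by a sufficiently small \emph{positive real}~$\epsilon$, which alters none of the groups in play because the Bruhat--Tits and Moy--Prasad filtrations jump only on discrete sets.

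Concretely: for $\epsilon>0$ smaller than every positive jump of the (finitely many) root-graded Bruhat--Tits filtrations at~$x$ -- on the root groups, the root spaces, and the torus~$\ul T$ and its Lie algebra -- one has $\vec G_{x,(\epsilon,\dots,\epsilon)}=\vec G_{x,0+}$ and $\vec{\frak g}_{x,(\epsilon,\dots,\epsilon)}=\vec{\frak g}_{x,0+}$, simply because each such filtration is constant for all sufficiently small positive index while the level-$0+$ pieces are the unions over positive index (for $\vec{\frak g}_{x,f}$ this is the direct-sum decomposition of \Cref{sec:aut:tame}, and for $\vec G_{x,f}$ it follows from the product decomposition recalled in the proof of \Cref{thm48}). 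Take such an $\epsilon$ with in addition $\epsilon\leq r_i$ for every~$i$ -- possible since each $r_i>0$, which is exactly the condition making $\vec G_{x,0+:\vec r}$ a well-defined quotient (and if some $r_i=\infty$ then both sides of the claimed equality are infinite, so we may assume $\vec r$ finite). Then $(\epsilon,\dots,\epsilon)$ is a weakly increasing admissible sequence of length~$d$ -- take $j=d$ in the definition of admissibility -- with $0<\epsilon\leq r_i<\infty$ for all~$i$, so \Cref{thm57} applies to the pair $((\epsilon,\dots,\epsilon),\vec r)$ and gives $|\vec G_{x,(\epsilon,\dots,\epsilon):\vec r}|=|\vec{\frak g}_{x,(\epsilon,\dots,\epsilon):\vec r}|$; the identifications just made turn this into $|\vec G_{x,0+:\vec r}|=|\vec{\frak g}_{x,0+:\vec r}|$.

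I do not anticipate any serious obstacle: the sole point requiring attention is the formal one above, that the lower index $0+$ is not a real number and so must first be nudged down to a genuine positive~$\epsilon$ before \Cref{thm57} can be invoked -- something the discreteness of the Bruhat--Tits filtrations makes completely harmless. All the real content -- the Moy--Prasad isomorphism on filtration subquotients, packaged through \Cref{thm56}, \Cref{thm8}, and the Bruhat--Tits product decomposition used in the proof of \Cref{thm48} -- already resides in \Cref{thm57}.
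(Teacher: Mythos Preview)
Your proposal is correct and follows exactly the route the paper intends: the paper presents \Cref{thm15} as an immediate special case of \Cref{thm57} with no further argument, and you have supplied the obvious missing sentence---that the discreteness of the filtration jumps lets one replace the formal symbol~$0+$ by a sufficiently small real~$\epsilon$ so that \Cref{thm57}, which is stated for $\R$-valued sequences, literally applies. The paper silently elides this point; you have made it explicit, which is if anything an improvement.
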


\subsection{Length computation}
\label{sec:aut:len}
Retaining the notation of \Cref{sec:aut:tame},
let $f:\underline\RS\to\R$ be a positive function
and let $\ul R'\subseteq\ul R$ be a subset.
Our goal in this subsection
is to compute the length of the $\cal O$-module
$\frak g^{\ul R'}_{x,0+:f}$,
culminating in \Cref{thm18}.

We start by studying the jumps
in the filtration on $\frak g^{\ul\alpha}_x$.
For $\ul\alpha\in\underline\RS$,
consider the set
\[
\ord_x\ul\alpha \defeq \{t\in\R :
\frak g^{\ul\alpha}_{x,t:t+}\neq0\}
\]
of jumps in the Moy-Prasad filtration
of~$\frak g^{\ul\alpha}$,
defined and studied by DeBacker and Spice
\cite[Definition~3.6]{debacker_spice18}.
A full description of~$\ord_x\ul\alpha$
requires an understanding of the point~$x$,
and thus the way in which $\cal B(\ul T)$
embeds in $\cal B(\ul G)$.
This is quite difficult in general.
But for us it is enough to know
several weak properties of these sets.

\begin{lemma}
\label{thm16}
Let $\ul T$ be a tame maximal torus of~$\ul G$,
let $x\in\cal A(\ul G,\ul T)$,
and let $\ul\alpha\in\underline\RS$.
Then
\begin{enumerate}
\item
$\ord_x(-\ul\alpha)=-\ord_x\ul\alpha$,
\item
$\ord_x\ul\alpha$ is an $\ord_k(k_{\ul\alpha}^\times)$-torsor, and
\item
$\len \frak g^{\ul\alpha}_{x,t:t+}
= [\kappa_{\ul\alpha}:\kappa]\one_{\ord_x\ul\alpha}(t)$.
\end{enumerate}
\end{lemma}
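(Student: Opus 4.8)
The plan is to reduce the statement to a computation for a single Galois-fixed root over the smaller field $k_\alpha$, where it is classical Bruhat--Tits theory, and to keep careful track of valuations along the way.

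Fix a root $\alpha\in\ul\alpha(\bar k)$, so that $k_\alpha\cong k_{\ul\alpha}$ and $\kappa_\alpha\cong\kappa_{\ul\alpha}$, and let $\ell\supseteq k_\alpha$ be a tame Galois splitting field of~$\ul T$. Since $\alpha$ is fixed by $\Gamma_{\ell/k_\alpha}$, it is a single Galois-fixed root of the $k_\alpha$-group $\ul G_{k_\alpha}$, and Galois descent -- equivalently, Shapiro's lemma applied to the identification $\ul\alpha(\bar k)\cong\Gamma_{\ell/k}/\Gamma_{\ell/k_\alpha}$ -- exhibits $\frak g^{\ul\alpha}=\bigl(\bigoplus_{\beta\in\ul\alpha(\bar k)}\frak g_\ell^\beta\bigr)^{\Gamma_{\ell/k}}$ as the $\alpha$-root space of $\ul G_{k_\alpha}$, a one-dimensional $k_\alpha$-vector space, with the Moy--Prasad filtration $\frak g^{\ul\alpha}_{x,\bullet}$ becoming the filtration attached to~$\alpha$ as a $k_\alpha$-rational root. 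In particular $\frak g^{\ul\alpha}_{x,r}=\{Y\in\frak g^{\ul\alpha}:v_x^\alpha(Y)\geq r\}$, where $v_x^\alpha:\frak g^{\ul\alpha}\setminus\{0\}\to\R$ is the valuation determined by~$x$ \cite{bruhat_tits72}.

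Parts~(2) and~(3) then follow immediately. For $c\in k_\alpha^\times$ and nonzero $X\in\frak g^{\ul\alpha}$ one has $v_x^\alpha(cX)=\ord_k(c)+v_x^\alpha(X)$, and since (by our valuation convention) $\ord_k(k_\alpha^\times)=e_{k_\alpha/k}^{-1}\Z=\ord_k(k_{\ul\alpha}^\times)$, the values of~$v_x^\alpha$ on $\frak g^{\ul\alpha}\setminus\{0\}$ form the single coset $c_0+\ord_k(k_{\ul\alpha}^\times)$, where $c_0\defeq v_x^\alpha(X)$; this coset is independent of~$X$, and also of~$\alpha$ within $\ul\alpha(\bar k)$ modulo $\ord_k(k_{\ul\alpha}^\times)$, so it depends on~$\ul\alpha$ alone. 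That coset is exactly $\ord_x\ul\alpha$, which is part~(2). For part~(3), the pieces $\frak g^{\ul\alpha}_{x,r}$ are fractional $\cal O_{k_\alpha}$-ideals inside the $k_\alpha$-line $\frak g^{\ul\alpha}$, so at a jump $t\in\ord_x\ul\alpha$ the graded piece $\frak g^{\ul\alpha}_{x,t:t+}$ is a one-dimensional $\kappa_{\ul\alpha}$-vector space, while it vanishes off~$\ord_x\ul\alpha$; this gives the claimed formula once we recall $\len_{\cal O}\kappa_{\ul\alpha}=[\kappa_{\ul\alpha}:\kappa]$.

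For part~(1), note that $\Gamma_{-\alpha}=\Gamma_\alpha$, so $k_{-\ul\alpha}\cong k_{\ul\alpha}$ and the reduction above applies verbatim to~$-\ul\alpha$. Choosing $X\in\frak g^{\ul\alpha}$ and $X'\in\frak g^{-\ul\alpha}$ with $[X,X']$ a unit of~$\frak t$, the axioms of~$v_x$ force $v_x^{-\alpha}(X')=-v_x^\alpha(X)=-c_0$, whence $\ord_x(-\ul\alpha)=-c_0+\ord_k(k_{\ul\alpha}^\times)=-\ord_x\ul\alpha$; alternatively, part~(1) reduces, via the descent above, to the symmetry $v_x^{-\alpha}=-v_x^\alpha$ of the Bruhat--Tits valuation in the split case \cite{bruhat_tits72}. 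The main obstacle is the reduction in the second paragraph: one must check that the Moy--Prasad filtration of the orbit root space~$\frak g^{\ul\alpha}$ really is the single-root filtration over~$k_\alpha$ -- not merely over the splitting field~$\ell$ -- so that the jump coset is a torsor under $\ord_k(k_{\ul\alpha}^\times)$ rather than the coarser $\ord_k(\ell^\times)$. Granting this, parts~(2) and~(3) are a one-line valuation computation and part~(1) is the reflection symmetry of~$v_x$.
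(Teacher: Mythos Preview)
Your argument is essentially the paper's: both prove (2) and (3) by identifying $\frak g^{\ul\alpha}$ with a one-dimensional $k_{\ul\alpha}$-vector space on which the Moy--Prasad filtration is the valuation filtration shifted by a constant (the paper simply asserts such a filtration-compatible isomorphism $k_{\ul\alpha}\to\frak g^{\ul\alpha}$ exists, while you spell out the Shapiro-type descent to~$k_\alpha$). For (1) the paper does not argue directly but cites a $\PGL_2$ computation of DeBacker--Spice \cite[Corollary~3.11]{debacker_spice18}.

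One caution on your first argument for~(1): compatibility of the Lie bracket with the filtration gives only $[X,X']\in\frak t_{v_x^\alpha(X)+v_x^{-\alpha}(X')}$, hence $v_x^\alpha(X)+v_x^{-\alpha}(X')\leq 0$ when $[X,X']$ is a unit, not equality; so the bracket alone does not force $v_x^{-\alpha}(X')=-v_x^\alpha(X)$. Your alternative---reducing via descent to the split symmetry of the Bruhat--Tits valuation over~$k_\alpha$---is the sound route and is in effect what the cited $\PGL_2$ computation carries out.
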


\begin{proof}
Property~(1) follows from a $\PGL_2$
calculation~\cite[Corollary~3.11]{debacker_spice18}.
As for the other properties,
earlier we mentioned that $\frak g^{\ul\alpha}$
was isomorphic to~$k_{\ul\alpha}$,
though not canonically.
Choose one such isomorphism $\phi:k_{\ul\alpha}
\to\frak g_{\ul\alpha}$.
This isomorphism is compatible
with the Moy-Prasad filtration in the following sense:
there is a real number~$r_0$ such that for all $r\in\R$,
the isomorphism~$\phi$ restricts to an isomorphism
$k_{\ul\alpha,r+r_0}\simeq\frak g_{\ul\alpha,r}$
of $\cal O$-modules.
Properties~(2) and~(3) now follow immediately.
\end{proof}

\begin{corollary}
\label{thm59}
Let $\ul T$ be a tame maximal torus of~$\ul G$,
let $x\in\cal A(\ul G,\ul T)$,
let $f:\underline\RS\cup\{0\}\to\R$ be a positive function,
and let $\ul R'\subseteq\ul R$.
Then
\[
\len\frak g^{\ul R'}_{x,0+:f}
= \sum_{\ul\alpha\in\underline R'}
\sum_{0<t<f(\ul\alpha)} [\kappa_{\ul\alpha}:\kappa]
\one_{\ord_x\ul\alpha}(t)
\]
\end{corollary}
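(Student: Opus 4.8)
The plan is to reduce the statement to \Cref{thm16} using the root-space decomposition of \Cref{sec:aut:tame} together with the additivity of length. First I would note that, since both the index~$0+$ (viewed as a constant function) and the function~$f$ act componentwise on the decomposition $\frak g^{\ul R'}=\bigoplus_{\ul\alpha\in\ul R'}\frak g^{\ul\alpha}$, the subquotient in question splits as a direct sum
\[
\frak g^{\ul R'}_{x,0+:f}
= \bigoplus_{\ul\alpha\in\ul R'}\frak g^{\ul\alpha}_{x,0+:f(\ul\alpha)}.
\]
Since length is additive over finite direct sums of $\cal O$-modules, it therefore suffices to prove that for each single $\ul\alpha\in\ul R'$,
\[
\len\frak g^{\ul\alpha}_{x,0+:f(\ul\alpha)}
= \sum_{0<t<f(\ul\alpha)}[\kappa_{\ul\alpha}:\kappa]\,\one_{\ord_x\ul\alpha}(t).
\]

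Fixing $\ul\alpha$, I would refine the two-step filtration $\frak g^{\ul\alpha}_{x,0+}\supseteq\frak g^{\ul\alpha}_{x,f(\ul\alpha)}$ by inserting the intermediate Moy--Prasad lattices $\frak g^{\ul\alpha}_{x,t}$ for $0<t<f(\ul\alpha)$; by the conventions for the extended index set~$\widetilde\R$, the graded pieces of the induced filtration on $\frak g^{\ul\alpha}_{x,0+:f(\ul\alpha)}$ are exactly the $\frak g^{\ul\alpha}_{x,t:t+}$ with $0<t<f(\ul\alpha)$. Two finiteness observations make this rigorous. First, the open interval $(0,f(\ul\alpha))$ contains only finitely many jumps, since $\ord_x\ul\alpha$ is a torsor under the discrete group $\ord_k(k_{\ul\alpha}^\times)=e_{k_{\ul\alpha}/k}^{-1}\Z$ by \Cref{thm16}(2). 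Second, $\frak g^{\ul\alpha}_{x,0+:f(\ul\alpha)}$ itself has finite length, because the identification $\frak g^{\ul\alpha}\simeq k_{\ul\alpha}$ carrying the Moy--Prasad filtration to a constant shift of the valuation filtration, recorded in the proof of \Cref{thm16}, exhibits it as a bounded-above, bounded-below quotient of~$k_{\ul\alpha}$, hence a finite-length torsion $\cal O$-module. Granting these, iterated additivity of length along the refined filtration gives
\[
\len\frak g^{\ul\alpha}_{x,0+:f(\ul\alpha)}
= \sum_{0<t<f(\ul\alpha)}\len\frak g^{\ul\alpha}_{x,t:t+},
\]
and substituting $\len\frak g^{\ul\alpha}_{x,t:t+}=[\kappa_{\ul\alpha}:\kappa]\,\one_{\ord_x\ul\alpha}(t)$ from \Cref{thm16}(3) finishes the computation.

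I do not expect a genuine obstacle: the real input, namely the description of the single-root jumps in \Cref{thm16}, has already been carried out, so this \lcnamecref{thm59} is essentially a bookkeeping consequence of it. The only points deserving care are the $\widetilde\R$-index conventions that determine which jumps~$t$ actually contribute to $\frak g^{\ul\alpha}_{x,0+:f(\ul\alpha)}$, and the two finiteness remarks above, which guarantee that every sum in sight is finite.
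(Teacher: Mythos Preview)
Your proposal is correct and follows essentially the same approach as the paper: reduce to a single~$\ul\alpha$ by the root-space decomposition and additivity of length, refine the filtration on $\frak g^{\ul\alpha}_{x,0+:f(\ul\alpha)}$ by the intermediate Moy--Prasad steps, and apply \Cref{thm16}(3) to each graded piece. The paper's proof is just a terse version of yours, leaving implicit the direct-sum reduction and the finiteness checks you spell out.
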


\begin{proof}
It suffices to prove the \lcnamecref{thm59}
in the case where $\ul R'=\{\ul\alpha\}$.
Then both sides equal 
\[
\sum_{\substack{t\in\ord_{\ul\alpha} x\\0<t<f(\ul\alpha)}}
\len\frak g^{\ul\alpha}_{x,t:t+}
\]
by the third part of \Cref{thm16}.
\end{proof}

To simplify the sum in \Cref{thm59}
we prove a more general result
about summation of
discretely supported functions $h:\R\to\N$.
Given a lattice $\Lambda\subset\R$,
that is, a nontrivial cyclic subgroup,
say that $h$ is \emph{$\Lambda$-periodic}
if $h(t + \lambda) = h(t)$ for all $t\in\R$ and~$\lambda\in\Lambda$.
Given a bounded interval $I\subset\R$
with endpoints $a < b$, define
\[
\sideset{}{'}\sum_I h
= \sideset{}{'}\sum_{t\in I} h(t)
= \sum_{a < t < b} h(t) + \tfrac12[a\in I]h(a)
+ \tfrac12[b\in I]h(b),
\]
where $[\cdot]$ is the Iverson bracket.
Because of the normalization at the endpoints,
this summation operator enjoys the property that
\[
\sideset{}{'}\sum_I h + \sideset{}{'}\sum_J h
= \sideset{}{'}\sum_{I\cup J} h
\]
whenever $I$, $J$, and $I\cup J$ are compact intervals.

\begin{lemma}
\label{thm17}
Let $\Lambda\subset\R$ be a free abelian group of rank one,
let $\lambda_0\defeq\min(\Lambda\cap\R_{>0})$,
let $h:\R\to\N$ be a discretely supported function,
and let $H(s) \defeq \sum_{0\leq t\leq s}' h(t)$ for $s>0$.
Suppose $h$ is even and $\Lambda$-periodic.
Then for all $s\in\tfrac12\Lambda\cap\R_{>0}$,
\[
H(s) = \frac{s}{\lambda_0}H(\lambda_0).
\]
\end{lemma}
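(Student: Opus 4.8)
The plan is to reduce everything to two structural properties of the primed summation operator $\sideset{}{'}\sum$: the additivity $\sideset{}{'}\sum_{I}h + \sideset{}{'}\sum_{J}h = \sideset{}{'}\sum_{I\cup J}h$ for compact intervals meeting at an endpoint (recorded just before the statement), together with the translation and reflection invariances inherited from the hypotheses on $h$. Writing $\Lambda = \lambda_0\Z$, the $\Lambda$-periodicity of $h$ gives $\sideset{}{'}\sum_{I}h = \sideset{}{'}\sum_{I+\lambda}h$ for every $\lambda\in\Lambda$ and compact interval $I$; and evenness of $h$ gives, after the substitution $t\mapsto -t$, the reflection identity $\sideset{}{'}\sum_{[-a,0]}h = \sideset{}{'}\sum_{[0,a]}h$ for every $a>0$. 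The endpoint-halving in the definition of $\sideset{}{'}\sum$ is exactly what makes these three identities mutually compatible. Abbreviate $c \defeq H(\lambda_0) = \sideset{}{'}\sum_{[0,\lambda_0]}h$.

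The crux is to evaluate the half-period contribution $\sideset{}{'}\sum_{[0,\lambda_0/2]}h$, and I would do this by computing $\sideset{}{'}\sum_{[-\lambda_0/2,\lambda_0/2]}h$ in two ways. Splitting at $0$ and applying the reflection identity to the left half gives $\sideset{}{'}\sum_{[-\lambda_0/2,\lambda_0/2]}h = 2\,\sideset{}{'}\sum_{[0,\lambda_0/2]}h$. Splitting at $0$ and instead translating the left half by $\lambda_0\in\Lambda$ gives $\sideset{}{'}\sum_{[-\lambda_0/2,0]}h = \sideset{}{'}\sum_{[\lambda_0/2,\lambda_0]}h$, whence $\sideset{}{'}\sum_{[-\lambda_0/2,\lambda_0/2]}h = \sideset{}{'}\sum_{[0,\lambda_0/2]}h + \sideset{}{'}\sum_{[\lambda_0/2,\lambda_0]}h = \sideset{}{'}\sum_{[0,\lambda_0]}h = c$. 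Comparing the two expressions yields $\sideset{}{'}\sum_{[0,\lambda_0/2]}h = c/2$, and hence also $\sideset{}{'}\sum_{[\lambda_0/2,\lambda_0]}h = c/2$.

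For general $s\in\tfrac12\Lambda\cap\R_{>0}$ I would write $s = \tfrac{n}{2}\lambda_0$ with $n\in\Z_{>0}$ and subdivide $[0,s]$ into the $n$ consecutive closed intervals $[(j-1)\lambda_0/2,\, j\lambda_0/2]$ for $j=1,\dots,n$. Each such interval is a $\Lambda$-translate of $[0,\lambda_0/2]$ when $j$ is odd and of $[\lambda_0/2,\lambda_0]$ when $j$ is even, so by periodicity and the previous paragraph each contributes $c/2$. Additivity then gives $H(s) = n\cdot(c/2) = (s/\lambda_0)\,H(\lambda_0)$, which is the assertion.

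The only genuine obstacle is the half-period step: because $\lambda_0/2\notin\Lambda$, the contribution of $[0,\lambda_0/2]$ cannot be obtained from periodicity alone, and it is precisely the evenness hypothesis — via the reflection identity applied in the second paragraph — that pins it down. Once that is in hand, the rest is routine bookkeeping with the primed sum.
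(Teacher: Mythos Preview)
Your argument is correct and follows essentially the same approach as the paper: both reduce by additivity and $\Lambda$-periodicity to the half-period case, and both resolve that case by combining evenness with periodicity. The only cosmetic difference is that the paper packages the reflection step as the single identity $h(t)=h(\lambda_0-t)$ (so the two halves of $[0,\lambda_0]$ visibly match), whereas you route through the symmetric interval $[-\lambda_0/2,\lambda_0/2]$; these are the same computation.
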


\begin{proof}
Since $H(s + \lambda) = H(s) + H(\lambda)$,
induction reduces the proof to the case
where $s = \lambda_0$ or $s=\tfrac12\lambda_0$.
The first case is obvious;
for the second, use that $h(t) = h(\lambda_0-t)$.
\end{proof}

We can now compute a certain sum
that appears in the formal degree.

\begin{theorem}
\label{thm18}
Let $T$ be a tame maximal torus of~$G$,
let $x\in\cal A(\ul G,\ul T)$,
let $f:\underline\RS\cup\{0\}\to\R$
be a positive even function,
and let $\underline\RSS\subseteq\underline\RS$
be a subset closed under negation.
Suppose that $f(\ul\alpha)\in\tfrac12\ord(k_{\ul\alpha}^\times)$
for all $\ul\alpha\in\underline\RS$.
Then
\[
\len(\frak g_{x,0+:f}%
^{\underline\RSS})
+ \tfrac12\len(\frak g_{x,0:0+}%
^{\underline\RSS})
+ \tfrac12\len(\frak g_{x,f:f+}%
^{\underline\RSS})
= \sum_{\ul\alpha\in\underline\RSS} [k_{\ul\alpha}:k] f(\ul\alpha).
\]
\end{theorem}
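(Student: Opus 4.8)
The plan is to reduce the three-term left-hand side to an endpoint-normalized partial sum of the type governed by \Cref{thm17}. First I would decompose each of the three Lie-algebra subquotients into root spaces via $\frak g^{\ul\beta}_{x,a:b}$ and use additivity of length together with \Cref{thm16}(3) and \Cref{thm59}. Writing $h_{\ul\beta}$ for the $\N$-valued function $[\kappa_{\ul\beta}:\kappa]\,\one_{\ord_x\ul\beta}$ on $\R$, this identifies the left-hand side with
\[
\sum_{\ul\beta\in\underline\RSS}\Bigl(\sum_{0<t<f(\ul\beta)}h_{\ul\beta}(t)+\tfrac12 h_{\ul\beta}(0)+\tfrac12 h_{\ul\beta}(f(\ul\beta))\Bigr)=\sum_{\ul\beta\in\underline\RSS}\;\sideset{}{'}\sum_{0\le t\le f(\ul\beta)}h_{\ul\beta}(t),
\]
the last equality being the definition of the operator $\sideset{}{'}\sum$ from \Cref{sec:aut:len} (here $f(\ul\beta)>0$ because $f$ is positive; note also that $\frak g^{\ul\beta}_{x,f:f+}=\frak g^{\ul\beta}_{x,f(\ul\beta):f(\ul\beta)+}$). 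By \Cref{thm16}(2) the set $\ord_x\ul\beta$ is a single coset of the rank-one lattice $\Lambda_{\ul\beta}\defeq\ord(k_{\ul\beta}^\times)=e_{k_{\ul\beta}/k}^{-1}\Z$, so $h_{\ul\beta}$ is discretely supported and $\Lambda_{\ul\beta}$-periodic.

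The functions $h_{\ul\beta}$ need not be even individually, so next I would pair $\ul\beta$ with $-\ul\beta$. Since $\underline\RSS$ is closed under negation it is the disjoint union of the orbits $P=\{\ul\alpha,-\ul\alpha\}$ of the negation involution, where $P$ may be a singleton — this happens exactly when some Galois element carries $\ul\alpha$ to $-\ul\alpha$. The $\Gamma_k$-stabilizers of $\alpha$ and $-\alpha$ agree, so $k_{-\ul\alpha}\cong k_{\ul\alpha}$; hence $[\kappa_{\ul\alpha}:\kappa]$, $\Lambda_{\ul\alpha}$ and $[k_{\ul\alpha}:k]$ are constant on $P$, and so is $f(\ul\alpha)$ because $f$ is even. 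Put $h_P\defeq\sum_{\ul\beta\in P}h_{\ul\beta}$. By \Cref{thm16}(1) we have $\one_{\ord_x(-\ul\alpha)}(t)=\one_{\ord_x\ul\alpha}(-t)$, so $h_P$ is even; it is discretely supported and $\Lambda_{\ul\alpha}$-periodic; and since each $\ord_x\ul\beta$ meets a fundamental domain of $\Lambda_{\ul\alpha}$ in exactly one point, the total mass of $h_P$ over one period equals $|P|\,[\kappa_{\ul\alpha}:\kappa]$.

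Then I would apply \Cref{thm17} to $h_P$ with the lattice $\Lambda_{\ul\alpha}$. Writing $\lambda_0\defeq\min(\Lambda_{\ul\alpha}\cap\R_{>0})=e_{k_{\ul\alpha}/k}^{-1}$ and $H_P(s)\defeq\sideset{}{'}\sum_{0\le t\le s}h_P(t)$, the hypothesis $f(\ul\alpha)\in\tfrac12\ord(k_{\ul\alpha}^\times)=\tfrac12\Lambda_{\ul\alpha}$ is precisely the condition under which \Cref{thm17} yields $H_P(f(\ul\alpha))=\tfrac{f(\ul\alpha)}{\lambda_0}H_P(\lambda_0)$. Using periodicity, the half-weights in $H_P(\lambda_0)=\sum_{0<t<\lambda_0}h_P(t)+\tfrac12 h_P(0)+\tfrac12 h_P(\lambda_0)$ combine so that $H_P(\lambda_0)$ equals the mass of $h_P$ over the fundamental domain $[0,\lambda_0)$, namely $|P|\,[\kappa_{\ul\alpha}:\kappa]$. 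Therefore
\[
H_P(f(\ul\alpha))=f(\ul\alpha)\,e_{k_{\ul\alpha}/k}\,|P|\,[\kappa_{\ul\alpha}:\kappa]=|P|\,f(\ul\alpha)\,[k_{\ul\alpha}:k],
\]
since $[\kappa_{\ul\alpha}:\kappa]=f_{k_{\ul\alpha}/k}$ and $e_{k_{\ul\alpha}/k}f_{k_{\ul\alpha}/k}=[k_{\ul\alpha}:k]$. Summing over the negation-orbits $P$ of $\underline\RSS$, the right-hand side becomes $\sum_{\ul\beta\in\underline\RSS}[k_{\ul\beta}:k]\,f(\ul\beta)$, while $\sum_P H_P(f(\ul\alpha))=\sum_P\sum_{\ul\beta\in P}\sideset{}{'}\sum_{0\le t\le f(\ul\beta)}h_{\ul\beta}(t)$ is exactly the expression from the first paragraph; comparing gives the claimed identity.

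I expect the main obstacle to be organizational rather than conceptual: correctly splicing together \Cref{thm16,thm59,thm17}, keeping the half-weights at $t=0$ and $t=f(\ul\beta)$ straight when passing between the three-term left-hand side and the operator $\sideset{}{'}\sum$, and checking that the even-function formalism absorbs the singleton negation-orbit with no separate case. The one genuinely substantive observation is that the hypothesis $f(\ul\alpha)\in\tfrac12\ord(k_{\ul\alpha}^\times)$ is exactly the input \Cref{thm17} requires; once that is noticed, everything else is routine.
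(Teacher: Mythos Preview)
Your proof is correct and follows essentially the same route as the paper: decompose via \Cref{thm59} and \Cref{thm16}(3) into the primed sums $\sideset{}{'}\sum_{0\le t\le f(\ul\alpha)}[\kappa_{\ul\alpha}:\kappa]\one_{\ord_x\ul\alpha}(t)$, symmetrize using \Cref{thm16}(1) to obtain an even $\Lambda_{\ul\alpha}$-periodic function, then apply \Cref{thm17} and evaluate $H(\lambda_0)$ via the torsor property \Cref{thm16}(2). The only organizational difference is that the paper symmetrizes term by term, replacing each summand by $\tfrac12[\kappa_{\ul\alpha}:\kappa]\bigl(\one_{\ord_x\ul\alpha}+\one_{\ord_x(-\ul\alpha)}\bigr)$ and keeping the sum indexed by all of $\underline\RSS$, whereas you first group into negation-orbits~$P$ and work with $h_P$; the two are equivalent and your handling of the singleton (symmetric) case is correct.
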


\begin{proof}
By \cref{thm59}, the lefthand side
of the \lcnamecref{thm18} is
\[
\len(\frak g_{x,0+:f}%
^{\underline\RSS})
+ \tfrac12\len(\frak g_{x,0:0+}%
^{\underline\RSS})
+ \tfrac12\len(\frak g_{x,f:f+}%
^{\underline\RSS})
= \sum_{\ul\alpha\in\underline\RSS}
\sideset{}{'}\sum_{0\leq t\leq f(\ul\alpha)}
[\kappa_{\ul\alpha}:\kappa]\one_{\ord_x\ul\alpha}(t).
\]
Since $f$ is even and $\kappa_{\ul\alpha}
=\kappa_{-\ul\alpha}$,
the righthand side above is
\[
\sum_{\ul\alpha\in\underline\RSS}
\sideset{}{'}\sum_{0\leq t\leq f(\ul\alpha)}
\tfrac12[\kappa_{\ul\alpha}:\kappa]
\bigl(\one_{\ord_x\ul\alpha}(t) +
\one_{\ord_x(-\ul\alpha)}(t)\bigr).
\]
By \cref{thm16},
the function $\one_{\ord_x\ul\alpha} + \one_{\ord_x(-\ul\alpha)}$
is even and $\ord(k_{\ul\alpha}^\times)$-periodic.
Hence we may apply \cref{thm17} to conclude that
\[
\sideset{}{'}\sum_{0\leq t\leq f(\ul\alpha)}
\tfrac12[\kappa_{\ul\alpha}:\kappa]
\bigl(\one_{\ord_x\ul\alpha}(t) +
\one_{\ord_x(-\ul\alpha)}(t)\bigr)
= [k_{\ul\alpha}:k] f(\ul\alpha),
\]
using the fact that
\[
\sideset{}{'}\sum_{0\leq t\leq 1}
[\kappa_{\ul\alpha}:\kappa]\one_{\ord_x\ul\alpha}(t)
= \sum_{0\leq t<1} [\kappa_{\ul\alpha}:\kappa]\one_{\ord_x\ul\alpha}(t)
= [\kappa_{\ul\alpha}:\kappa]\ord_k(k_\alpha^\times)
= [k_{\ul\alpha}:k]. \qedhere
\]
\end{proof}

\subsection{Yu's construction}
\label{sec:aut:yu}
In this subsection we describe
Yu's supercuspidal representations,
following Hakim and Murnaghan's 
expanded exposition \cite[Section~3]{hakim_murnaghan08}
of Yu's original paper \cite{yu01}.
Yu's full construction is quite elaborate,
but fortunately, it is enough for us
to understand only the parts of the construction
needed to calculate the formal degree.

A \emph{cuspidal $\ul G$-datum} is a 5-tuple
$\Psi=(\vec{\ul G},y,\vec r,\rho,\vec\phi)$ consisting of:
\begin{itemize}
\item
a tame twisted Levi sequence $\vec{\ul G}$
such that $\ul Z^0/\ul Z$ is anisotropic,
where $\ul Z$ is the center of~$\ul G$
and $\ul Z^0$ is the center of $\ul G^0$;
\item
a point $y$ in the apartment 
of a tame maximal torus of~$\ul G^0$;
\item
an increasing sequence
$\vec r = (0 < r_0 < r_1 < \cdots < r_{d-1} \leq r_d)$
of real numbers
(if $d=0$ then we only require that $0\leq r_0$);
\item
an irreducible representation~$\rho$
of $G^0_{[y]}$ whose restriction to $G^0_{y,0+}$
is $1$-isotypic and for whom the compact induction
$\cInd_{K^0}^{G^0}\rho$ is irreducible
(hence supercuspidal); 
\item
a sequence $\vec\phi=(\phi_0,\dots,\phi_d)$ of characters, 
with $\phi_i$ a character of~$G^i$,
such that $\phi_d = 1$ if $r_d = r_{d-1}$
and otherwise $\phi_i$ has depth~$r_i$
for all~$i$.
\end{itemize}
The datum is \emph{generic} if for each~$i\neq d$
the character $\phi_i$ of~$G^i$
is $\ul G^{i+1}$-generic 
in the sense of \Cref{sec:llc:pairs},
and the datum is \emph{regular} if,
in addition, the depth-zero supercuspidal
representation $\cInd_{K^0}^{G^0}\rho$
is regular.

Many of the objects used in Yu's construction
and built from a cuspidal $\ul G$-datum
do not depend on the representations~$\rho$ and~$\vec\phi$.
To make this independence explicit,
we define a \emph{cuspidal $\ul G$-datum
without representations}
to be a 3-tuple $(\vec{\ul G},y,\vec r)$
consisting of the first three components
of a cuspidal $\ul G$-datum.

Let $\Psi=(\vec{\ul G},\vec r,y)$ be a cuspidal
$\ul G$-datum without representations.
From $\Psi$ we can construct the following subgroups:
\begin{align*}
K^0 &= G^0_{[y]}
& K^0_+ &= G^0_{y,0+} \\
K^{i+1} &= K^0\vec G^{(i+1)}_{y,(0+,s_0,\dots,s_i)}
& K^{i+1}_+ &= \vec G^{(i+1)}_{y,(0+,s_0+,\dots,s_i+)} \\
J^{i+1} &= (G^i,G^{i+1})_{y,(r_i,s_i)}
& J^{i+1}_+ &= (G^i,G^{i+1})_{y,(r_i,s_i+)} \\
K &\defeq K^d \eqdef K^{d+1}
& K_+ &\defeq K^d_+ \eqdef K^{d+1}_+.
\end{align*}
Here $0\leq i\leq d-1$ and $s_i\defeq r_i/2$.
Generally the dependence of these objects on~$\Psi$
is implicit, but if we wish to make
the dependence explicit
we indicate it with a subscript,
for instance, $K=K_\Psi$.
When $\Psi$ is regular, an additional
group can be constructed:
the maximal torus~$S$ of \Cref{sec:llc:reps},
maximally unramified in~$G^0$.
The groups $K^{i+1}$ and $K^{i+1}_+$
are particularly important;
later on, we need to express them
in the following alternative ways.
\begin{align}
\label{eq23}
K^{i+1} &= K^0\vec G^{[1,i+1]}_{y,s_0,\dots,s_i}
& K^{i+1}_+ &= K^0_+\vec G^{[1,i+1]}_{y,s_0+,\dots,s_i+} \\
\label{eq41}
K^{i+1} &= K^i J^{i+1}
& K^{i+1}_+ &= K^i_+ J^{i+1}.
\end{align}

We can now outline Yu's construction.
Let $\Psi=(\vec{\ul G},y,\vec r,\rho,\vec\phi)$
be a generic cuspidal $\ul G$-datum.
For each $0\leq i\leq d-1$ there is
a certain finite-dimensional
irreducible representation%
\footnote{Our $\rho_i$ is
Hakim and Murnaghan's $\phi_i'$.}
$\rho_i$ of~$K^{i+1}$
constructed from~$\phi_i$.
To specify $\rho_i$ precisely one uses
the theory of the Weil-Heisenberg representation,
but for our purposes, it is enough to know that
\[
\dim\rho_i = [J^{i+1}:J^{i+1}_+]^{1/2}.
\]
The representation~$\rho_i$
is then inflated to a representation%
\footnote{ Our $\tau_i$ is Hakim and Murnaghan's $\kappa_i$.}
$\tau_i$ of~$K$.
\Cref{sec:aut:reg} explains
in more detail how the inflation procedure works,
but at the moment, it is enough to know that
the inflation procedure preserves dimension.
In the edge case $i=-1$ take $\tau_{-1}$
to be the inflation (by the same procedure)
of~$\rho$ to~$K$,
and in the edge case $i=d$ take $\tau_d$
to be the restriction of~$\phi_d$ to~$K$;
we could also handle the case $i=-1$
in the same way as the case $0\leq i\leq d-1$
by defining $\rho_{-1}\defeq\rho$.
Finally, define the supercuspidal representation~$\pi$
attached to~$\Psi$ as the compact induction
\[
\pi = \cInd_K^G\tau,
\qquad
\tau\defeq\tau_{-1}\otimes\tau_0\otimes\cdots\otimes\tau_d.
\]

In summary, then, by \cref{thm20}
the formal degree of the supercuspidal representation~$\pi$
attached to a cuspidal generic Yu-datum~$\Psi$
has formal degree
\begin{equation}
\label{eq24}
\deg(\pi_\Psi,\mu)
= \frac{\dim\rho\cdot\prod_{i=0}^{d-1}
[J^{i+1}:J^{i+1}_+]^{1/2}}{\vol(K/A,\mu)}
\end{equation}
where $\ul A$ is the maximal split central subtorus of~$\ul G$
and $\mu$ is a Haar measure on~$G/A$.

It greatly simplifies the notation
in our computation of the formal degree
to reduce to the case where the center of~$\ul G$
is anisotropic, that is, $\ul A=1$.
Given a generic cuspidal $\ul G$-datum 
without representations
$\Psi=(\vec{\ul G},y,\vec r)$,
let $\overline\Psi=(\overrightarrow{\ul G^{\tn a}},y,\vec r\,)$
denote the reduction of~$\Psi$ modulo~$\ul A$:
that is, $\ul G^{\tn a,i}\defeq\ul G^{i,\tn a}
=\ul G^i\!/\!\ul A$.

\begin{lemma}
\label{thm25}
Let $\Psi$ be a generic cuspidal $\ul G$-datum
without representations
and let $\overline\Psi$
be the reduction of~$\Psi$ modulo~$\ul A$.
Then 
\begin{enumerate}
\item
the groups $K_{\overline\Psi}$
and $K_\Psi/A$
are identical as subgroups of~$G/A$,
and
\item
$[J_\Psi^{i+1}:J^{i+1}_{\Psi,+}]
= [J_{\overline\Psi}^{i+1}:J^{i+1}_{\overline\Psi,+}]$
for all $0\leq i\leq d-1$.
\end{enumerate}
\end{lemma}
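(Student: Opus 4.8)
The plan is to deduce both statements from \Cref{thm9}, which already records how concave-function subgroups behave under division by a split central torus, supplemented by two elementary observations: one for the stabilizer factor $K^0$ of $K$ in part~(1), and one about how $\ul A(k)$ meets $J^{i+1}$ and $J^{i+1}_+$ in part~(2).

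For part~(1) I would first use the definition $K_\Psi = K^0_\Psi\,\vec G^{(d)}_{y,\vec s}$ with $\vec s\defeq(0+,s_0,\dots,s_{d-1})$ (when $d=0$ there is no second factor, and part~(1) is just the $K^0$-identification below). Here $\vec s$ is a weakly increasing admissible sequence, so $f_{\vec s}$ is concave and $\vec G^{(d)}_{y,\vec s}=(\ul G^d)(k)_{y,f_{\vec s}}$ is a concave-function subgroup. Since $\ul A$ acts trivially on the reduced building, $\ul A(k)\subseteq K^0_\Psi\subseteq K_\Psi$, so $K_\Psi/\ul A(k)$ is an honest quotient, equal to $\bigl(K^0_\Psi/\ul A(k)\bigr)\cdot\bigl(\vec G^{(d)}_{y,\vec s}\ul A(k)/\ul A(k)\bigr)$; it then suffices to identify the two factors. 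For the first, passing to the quotient by the central split torus $\ul A$ induces an identification $\cal B^\tn{red}(\ul G^0)=\cal B^\tn{red}(\ul G^{\tn a,0})$ compatible with the group actions --- using $(\ul G^0/\ul A)(k)=\ul G^0(k)/\ul A(k)$, which holds by Hilbert~90 for the split torus $\ul A$ --- so taking stabilizers of $[y]$ gives $K^0_\Psi/\ul A(k)=K^0_{\overline\Psi}$. For the second, \Cref{thm9}, applied with ambient group $\ul G^d$ and split central torus $\ul A$, gives $\vec G^{(d)}_{y,\vec s}\ul A(k)/\ul A(k)=(\ul G^d/\ul A)(k)_{y,f_{\vec s}}$; since roots are trivial on $\ul A$ we have $\underline\RS(\ul G^d/\ul A,\ul T/\ul A)=\underline\RS(\ul G^d,\ul T)$ and $\underline\RS(\ul G^i/\ul A,\ul T/\ul A)=\underline\RS_i$, so $f_{\vec s}$ is literally the same function in the two settings and $(\ul G^d/\ul A)(k)_{y,f_{\vec s}}=\overrightarrow{\ul G^{\tn a}}^{(d)}_{y,\vec s}$. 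As $K_{\overline\Psi}=K^0_{\overline\Psi}\,\overrightarrow{\ul G^{\tn a}}^{(d)}_{y,\vec s}$ by the same defining formula, the two matching factors give $K_\Psi/\ul A(k)=K_{\overline\Psi}$.

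For part~(2), the same comparison of root systems shows that $J^{i+1}_\Psi=(\ul G^{i+1})(k)_{y,f}$ and $J^{i+1}_{\Psi,+}=(\ul G^{i+1})(k)_{y,f_+}$ for concave functions $f,f_+$ which, read on $\underline\RS(\ul G^{i+1}/\ul A,\ul T/\ul A)$, also compute $J^{i+1}_{\overline\Psi}$ and $J^{i+1}_{\overline\Psi,+}$; hence \Cref{thm9} gives $J^{i+1}_{\overline\Psi}=J^{i+1}_\Psi/(\ul A(k)\cap J^{i+1}_\Psi)$ and $J^{i+1}_{\overline\Psi,+}=J^{i+1}_{\Psi,+}/(\ul A(k)\cap J^{i+1}_{\Psi,+})$. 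The point is that these two intersections coincide: $f$ and $f_+$ differ only on the roots of $\underline\RS(\ul G^{i+1},\ul T)\setminus\underline\RS(\ul G^i,\ul T)$ and agree (with value $r_i$) at $0$, so the product decomposition used in the proof of \Cref{thm48} gives $\ul T(k)\cap J^{i+1}_\Psi=\ul T(k)_{r_i}=\ul T(k)\cap J^{i+1}_{\Psi,+}$, hence $N_0\defeq\ul A(k)\cap J^{i+1}_\Psi=\ul A(k)\cap J^{i+1}_{\Psi,+}$. Since $N_0\subseteq\ul A(k)$ is central in $\ul G^{i+1}(k)$, it is normal in $J^{i+1}_\Psi$ and contained in $J^{i+1}_{\Psi,+}$, so
\[
[J^{i+1}_{\overline\Psi}:J^{i+1}_{\overline\Psi,+}]
= [J^{i+1}_\Psi/N_0:J^{i+1}_{\Psi,+}/N_0]
= [J^{i+1}_\Psi:J^{i+1}_{\Psi,+}].
\]

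This is mostly bookkeeping on top of \Cref{thm9}. The step I expect to need the most care is pinning down cleanly that $\ul A(k)\cap J^{i+1}_\Psi=\ul A(k)\cap J^{i+1}_{\Psi,+}$ --- this is exactly what lets the index descend to the quotient --- together with the minor nuisance of checking that the sequence $\vec s$ with its leading $0+$ is admissible, so that $f_{\vec s}$ is concave and \Cref{thm9} applies; the identification of the reduced buildings of $\ul G^0$ and $\ul G^0/\ul A$ is standard Bruhat-Tits and should cause no difficulty.
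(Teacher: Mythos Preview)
Your proposal is correct and follows essentially the same approach as the paper. For part~(1) both arguments split $K_\Psi$ into the stabilizer factor $K^0$ (handled via the identification of reduced buildings and surjectivity of $\ul G^0(k)\to(\ul G^0/\ul A)(k)$) and the concave-function factor (handled by \Cref{thm9}); for part~(2) the paper merely says ``by an argument similar to the proof of \Cref{thm9}'', whereas you spell out that argument by applying \Cref{thm9} to both $J^{i+1}$ and $J^{i+1}_+$ and then observing that $A\cap J^{i+1}=A\cap J^{i+1}_+$ since the two defining concave functions agree at~$0$.
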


\begin{proof}
For the first part, since
\[
\frac{K_\Psi}{A}
= \frac{G^0_{[y]}}{A}\cdot
\frac{A\vec G^{(d)}_{y,(0+,\vec s)}}%
{\vec G^{(d)}_{y,(0+,\vec s)}}
= \frac{G^0_{[y]}}{A}\cdot
\frac{\vec G^{(d)}_{y,(0+,\vec s)}}%
{\vec G^{(d)}_{y,(0+,\vec s)}\cap A},
\]
it suffices by \cref{thm9}
to show that
$G^0_{[y]}/\!A = (G^0\!/\!A)_{[y]}$,
and this follows immediately from
the surjectivity of the map
$\ul G\to(\ul G/\!\ul A)(k)$
and the natural identification
of the reduced buildings
of~$\ul G$ and $\ul G/\!\ul A$.

The second part follows by an argument
similar to the proof of \cref{thm9}.
\end{proof}

\subsection{Degree computation}
\label{sec:aut:comp}
In this subsection we compute the formal degree
of Yu's supercuspidal representation.
Let $\ul G$ be a reductive $k$-group
let $\Psi=(\vec{\ul G},y,\vec r,\rho,\vec\phi)$
be a cuspidal $\ul G$-datum,
let $\ul A$ be the maximal split central subtorus of~$\ul G$,
and let $\mu$ be the Haar measure on~$G\!/\!A$
attached by Gan and Gross~\cite{gan_gross99,hiraga_ichino_ikeda08erratum}
to a level-zero additive character of~$k$.

Starting from \cref{eq24},
we will reduce the problem of computing the formal degree
to the problem of computing certain subgroup indices.
We will then be in a position
to apply \cref{thm18},
finishing the calculation.
By \cref{thm25}, we may assume
for now that the center of~$G$ is anisotropic,
though of course this restriction
will have to be relaxed in the final formula.

We will start by simplifying the volume of~$K$.
To begin with,
\begin{equation}
\label{eq26}
\vol(K,\mu)^{-1}
= \vol(G_{y,0+},\mu)^{-1}
\frac{[K G_{y,0+}:K]}%
{[KG_{y,0+}:G_{y,0+}]}.
\end{equation}

\begin{lemma}
\label{thm27}
Let $G$ be a group,
let $H$ be a subgroup of~$G$,
and let $N\subseteq M$ be subgroups of~$G$
normalized by~$H$.%
\footnote{This condition is needed for $MH$
and~$NH$ to be groups; it might be possible
to weaken it.}
Then
\[
[MH : NH] = \frac{[M:N]}{[M\cap H:N\cap H]},
\]
provided that all three indices in the expression are finite.
\end{lemma}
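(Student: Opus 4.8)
The plan is to reduce the identity to a tower-law computation by exhibiting a bijection between the right cosets of $NH$ in $MH$ and the right cosets of a suitable subgroup of~$M$. Throughout, the hypothesis that $H$ normalizes $M$ and~$N$ is what makes $MH$ and $NH$ subgroups of~$G$, with $NH\subseteq MH$; moreover $N(H\cap M)$ is a subgroup of~$M$ in which $N$ is normal, since $H$, and hence $H\cap M$, normalizes~$N$.

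First I would show that the map $\phi\colon M\to NH\backslash MH$, $m\mapsto NHm$, is surjective. Given $g=mh$ with $m\in M$ and $h\in H$, put $m'\defeq h^{-1}mh$, which lies in~$M$ because $H$ normalizes~$M$; then $g=mh=hm'$, and since $h\in H\subseteq NH$ we get $NHg=NHhm'=NHm'$, so $NHg$ is in the image of~$\phi$. Next I would identify the fibers: for $m_1,m_2\in M$ one has $NHm_1=NHm_2$ iff $m_1m_2^{-1}\in NH\cap M$, and the crucial modular identity is $NH\cap M=N(H\cap M)$. The inclusion $\supseteq$ is clear from $N\subseteq M$; for $\subseteq$, if $nh\in M$ with $n\in N$, $h\in H$, then $h=n^{-1}(nh)\in M$, so $h\in H\cap M$ and $nh\in N(H\cap M)$. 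Consequently the fibers of~$\phi$ are precisely the right cosets of $N(H\cap M)$ in~$M$, so $\phi$ descends to a bijection $N(H\cap M)\backslash M\ \xrightarrow{\ \sim\ }\ NH\backslash MH$, whence $[MH:NH]=[M:N(H\cap M)]$.

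Finally I would finish with two applications of the tower/isomorphism-theorem machinery: $[M:N]=[M:N(H\cap M)]\cdot[N(H\cap M):N]$, and by the second isomorphism theorem $N(H\cap M)/N\cong(H\cap M)/\bigl(N\cap(H\cap M)\bigr)$, where $N\cap H\cap M=N\cap H$ because $N\subseteq M$. Hence $[N(H\cap M):N]=[M\cap H:N\cap H]$, and combining this with $[MH:NH]=[M:N(H\cap M)]$ gives $[MH:NH]=[M:N]/[M\cap H:N\cap H]$; the finiteness hypotheses guarantee all the quotients in sight are finite and the division is legitimate.

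There is no serious obstacle here: the argument is elementary. The only points requiring a little care are the coset bookkeeping in the surjectivity step (using that $H$ normalizes~$M$ to move $h$ past~$m$) and the Dedekind-type identity $NH\cap M=N(H\cap M)$, which must be invoked to pin down the fibers of~$\phi$.
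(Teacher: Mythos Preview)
Your argument is correct. The paper's proof is organized differently: it lets $M/N$ act by left multiplication on the coset space $MH/NH$, observes the action is transitive, identifies the stabilizer of the identity coset with $(M\cap H)/(N\cap H)$, and concludes by the orbit--stabilizer theorem. Your route instead factors through the intermediate subgroup $N(H\cap M)$ of~$M$: you establish a bijection $N(H\cap M)\backslash M \simeq NH\backslash MH$ via the Dedekind identity $NH\cap M=N(H\cap M)$, and then peel off $[N(H\cap M):N]=[M\cap H:N\cap H]$ using the second isomorphism theorem. The two arguments are close cousins---the orbit--stabilizer computation is, after unwinding, the same coset bijection---but your version has the slight advantage of never needing $M/N$ to be a group, i.e.\ never implicitly invoking that $N$ is normal in~$M$; you use only that $H\cap M$ normalizes~$N$, which follows from the stated hypothesis that $H$ does.
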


\begin{proof}
The inclusion $M\cap H\into M$ induces an injective map
$M\cap H/N\cap H\into M/N$ which we can use to interpret
the former as a subgroup of the latter.
The group $M/N$ acts transitively by left multiplication
on the coset space $MH/NH$.
Consider the stabilizer of the identity coset under this action.
Clearly $M\cap H/N\cap H$ lies in the stabilizer,
and conversely, it is easy to see that any
representative of an element of the stabilizer
can be translated by an element of~$N$ to lie in~$M\cap H$.
So $M\cap H/N\cap H$ is the stabilizer of the identity element,
and the orbit-stabilizer theorem concludes the proof.
\end{proof}

By \Cref{eq23,thm27,thm48},
\begin{equation}
\label{eq28}
[KG_{y,0+}:K]
= [K^0G_{y,0+}:K^0\vec G^{[1,d]}_{y,(s_0,\dots,s_{d-1})}]
= \frac{[G_{y,0+}:\vec G^{[1,d]}_{y,(s_0,\dots,s_{d-1})}]}%
{[G^0_{y,0+}:G^0_{y,s_0}]}.
\end{equation}
It follows from \Cref{thm48} again that
$[KG_{y,0+}:G_{y,0+}] = [K^0:K^0_+]$.
Combining this calculation with
\cref{eq23,eq26,eq28} yields
\[
\vol(K,\mu)^{-1}
= \vol(G_{y,0+},\mu)^{-1}
\frac{[K G_{y,0+}:K]}%
{[KG_{y,0+}:G_{y,0+}]}
= \vol(G_{y,0+},\mu)^{-1}
\frac{[G_{y,0+}:\vec G^{[1,d]}_{y,(s_0,\dots,s_{d-1})}]}%
{[G^0_{[y]}:G^0_{y,s_0}]}.
\]

The volume of~$G_{y,0+}$ is known from the literature.

\begin{lemma}
Let $x\in\cal B(G)$.
Then
$\vol(G(k)_{x,0+},\mu)^{-1} = q^{(\dim G)/2} |\frak g_{x,0:0+}|^{1/2}$.
\end{lemma}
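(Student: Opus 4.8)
The plan is to transport the computation from the pro-$p$ group $G_{x,0+}$ onto the lattice $\frak g_{x,0+}\subseteq\frak g$, where the Moy-Prasad filtration makes everything explicit. Recall that $\mu=\mu_\psi$ is the measure attached by Gross, and then by Gan and Gross, to the canonical invariant top-degree differential~$\omega_G$ of~$G$ together with the level-zero character~$\psi$ \cite[Section~4]{gross97a}, \cite[Section~5]{gan_gross99}; in particular it comes from a translation-invariant form. Since $G_{x,0+}$ is pro-$p$, the Moy-Prasad isomorphism of \Cref{thm8} -- applied across the layers of the Moy-Prasad filtration and supplemented by the ordinary exponential map on a layer deep enough for it to converge and be measure-preserving -- gives $\vol(G_{x,0+},\mu)=\vol(\frak g_{x,0+},\mu_{\frak g})$, where $\mu_{\frak g}$ is the additive Haar measure on~$\frak g$ determined by $\omega_G|_e\in\wedge^{\dim G}\frak g^{*}$ and~$\psi$. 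So it remains to compute $\vol(\frak g_{x,0+},\mu_{\frak g})$.

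For that I would introduce a nondegenerate $G$-invariant symmetric bilinear form~$B$ on~$\frak g$, normalized so that the Moy-Prasad duality $\frak g_{x,r}^{\perp_B}=\frak g_{x,(-r)+}$ holds for all~$x$ and all~$r\in\R$; such a~$B$ exists because $p$ is odd (and $G$ is tame). Let $\mu_B$ be the Haar measure on~$\frak g$ that is self-dual with respect to~$B$ and~$\psi$, so $\vol(L,\mu_B)\vol(L^{\perp_B},\mu_B)=1$ for every lattice~$L\subset\frak g$. Taking $r=0$ gives $\frak g_{x,0+}^{\perp_B}=\frak g_{x,0}$, and combining this with the index $[\frak g_{x,0}:\frak g_{x,0+}]=|\frak g_{x,0:0+}|$ yields $\vol(\frak g_{x,0+},\mu_B)=|\frak g_{x,0:0+}|^{-1/2}$.

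It remains to compare $\mu_{\frak g}$ with~$\mu_B$. Both are translation-invariant, so $\mu_{\frak g}=c\,\mu_B$ for a positive scalar~$c$ depending only on~$(G,\psi)$, which I would evaluate in the split case at a hyperspecial point~$x_0$: there $\omega_G$ generates the top differentials of the Chevalley model, so $\vol(\frak g_{x_0,0},\mu_{\frak g})=1$, whereas $\frak g_{x_0,0}^{\perp_B}=\frak g_{x_0,0+}$ has index~$q^{\dim G}$ in~$\frak g_{x_0,0}$, so the same duality computation gives $\vol(\frak g_{x_0,0},\mu_B)=q^{(\dim G)/2}$. Hence $c=q^{-(\dim G)/2}$, and therefore
\[
\vol(G_{x,0+},\mu)=\vol(\frak g_{x,0+},\mu_{\frak g})=q^{-(\dim G)/2}\,|\frak g_{x,0:0+}|^{-1/2},
\]
which is the asserted formula after inverting.

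The step I expect to be the main obstacle is the normalization bookkeeping of the last paragraph: checking that a Moy-Prasad-dual form~$B$ exists in the stated generality, and that the pushforward of~$\mu$ to~$\frak g$ really is the $\omega_G|_e$-measure assigning a hyperspecial lattice volume~$1$ -- this is where the factor $q^{(\dim G)/2}$ is produced -- together with, for non-quasi-split~$G$, reconciling the comparison with the Artin-conductor correction built into Gan and Gross's normalization of~$\mu$. Everything else is routine manipulation of the Moy-Prasad filtration.
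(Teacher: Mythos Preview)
Your approach is a direct computation, whereas the paper's proof is two sentences of citation: DeBacker and Reeder \cite[Section~5.1]{debacker_reeder09} define a Haar measure~$\nu$ with $\vol(G(k)_{x,0},\nu)=|G_{x,0:0+}|/|\frak g_{x,0:0+}|^{1/2}$ for every~$x$, and Kaletha \cite[Lemma~5.15]{kaletha15} proves $\nu=q^{(\dim G)/2}\mu$; dividing by the index $|G_{x,0:0+}|$ gives the lemma.

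Your route would essentially re-derive those two facts. The self-dual-measure computation on~$\frak g$ via Moy--Prasad duality is precisely how DeBacker--Reeder's~$\nu$ is built, and the constant $c=q^{-(\dim G)/2}$ is exactly the content of Kaletha's cited lemma. You have correctly located the obstacle: evaluating~$c$ ``in the split case at a hyperspecial point'' pins it down only for split~$G$, and the constant genuinely depends on~$G$ through the Gan--Gross normalization, which carries an Artin-conductor factor from Gross's motive. Reconciling that factor with the self-dual measure for non-quasi-split~$G$ is exactly what Kaletha's lemma does, and it is not a triviality. A secondary caveat: your ``ordinary exponential map on a layer deep enough'' is unavailable when $k$ has positive characteristic, so the transport $\vol(G_{x,0+},\mu)=\vol(\frak g_{x,0+},\mu_{\frak g})$ needs a mock-exponential argument or a purely subquotient-by-subquotient comparison. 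In short, your outline is sound and more hands-on, but it lands on precisely the work the paper outsources to the literature.
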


\begin{proof}
DeBacker and Reeder~\cite[Section~5.1]{debacker_reeder09}
defined a Haar measure~$\nu$ on~$G(k)$ such that
for any $x\in\cal B(G)$,
\[
\vol(G(k)_{x,0},\nu) = \frac{|G_{x,0:0+}|}{|\frak g_{x,0:0+}|^{1/2}};
\]
in particular, $\nu$ does not depend on the choice of~$x$.
Kaletha showed~\cite[Lemma~5.15]{kaletha15} that
$\nu = q^{(\dim G)/2}\mu$.
Combining these results proves the lemma.
\end{proof}

At this point, we can say that
\begin{equation}
\label{eq29}
\deg(\pi,\mu)
= \frac{\dim\rho}{[G^0_{[y]}:G^0_{y,0+}]}
q^{(\dim G)/2}|\frak g_{y,0:0+}|^{1/2}
\frac{[G_{y,0+}:\vec G^{[1,d]}_{y,(s_0,\dots,s_{d-1})}]}%
{[G^0_{[y]}:G^0_{y,s_0}]}
\prod_{i=0}^{d-1} [J^{i+1}:J^{i+1}_+]^{1/2}.
\end{equation}
We can now simplify \cref{eq29}
using our earlier results on concave functions.

\begin{lemma}
\label{thm30}
\[
|\frak g_{y,0:0+}|^{1/2}
\frac{[G_{y,0+}:\vec G^{[1,d]}_{y,(s_0,\dots,s_{d-1})}]}%
{[G^0_{y,0+}:G^0_{y,s_0}]}
\prod_{i=0}^{d-1} [J^{i+1}:J^{i+1}_+]^{1/2}
= \exp_q\biggl(
\tfrac12\len\frak g^0_{y,0:0+}
+ \tfrac12\sum_{i=0}^{d-1} r_i(|\RS_{i+1}| - |\RS_i|)
\biggr)
\]
\end{lemma}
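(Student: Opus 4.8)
The plan is to apply $\log_q$ to both sides: since $\exp_q$ turns products into sums and $|M|=\exp_q\len M$, the left-hand side becomes a sum of lengths of subquotients of~$\frak g$, and each such length decomposes over the root-space decomposition $\frak g_{y,f}=\bigoplus_{\ul\alpha\in\underline\RS\cup\{0\}}\frak g^{\ul\alpha}_{y,f}$. The goal is then to recognize the resulting expression as an instance of \Cref{thm18}. Write $s_i\defeq r_i/2$ (the numbers appearing in Yu's groups), set $\underline\RSS\defeq\underline\RS\setminus\underline\RS_0$ --- a set of Galois orbits closed under negation, since each $\RS_i$ is a Levi subsystem --- and identify $\frak g^0=\Lie\ul G^0$ with $\frak g^{\underline\RS_0\cup\{0\}}$, so that $\frak g_{y,0:0+}=\frak g^0_{y,0:0+}\oplus\frak g^{\underline\RSS}_{y,0:0+}$. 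Let $h:\underline\RS\cup\{0\}\to\R$ be the even positive function equal to $s_i$ on $\underline\RS_{i+1}\setminus\underline\RS_i$ for $0\le i\le d-1$ and equal, on $\underline\RS_0\cup\{0\}$, to any fixed value in $\tfrac12\ord(k_{\ul\alpha}^\times)$.

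First I would convert each index on the left into $\exp_q$ of a length. The factor $|\frak g_{y,0:0+}|^{1/2}$ is immediate. For the numerator $[G_{y,0+}:\vec G^{[1,d]}_{y,(s_0,\dots,s_{d-1})}]$, the Yu concave function $f$ of the truncated tame twisted Levi sequence $\vec{\ul G}^{[1,d]}$ and the sequence $(s_0,\dots,s_{d-1})$ is constant $s_0$ on $\underline\RS_1\cup\{0\}$ and equal to $s_i$ on $\underline\RS_{i+1}\setminus\underline\RS_i$, so it agrees with~$h$ on $\underline\RSS$; moreover $G_{y,0+}=\vec G^{[1,d]}_{y,(0+,\dots,0+)}$, since the Yu concave function of $(0+,\dots,0+)$ is identically $0+$. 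Because $(s_0,\dots,s_{d-1})$ is weakly increasing admissible (its entries strictly increase, as $r_0<\dots<r_{d-1}$), \Cref{thm15} identifies the index with $|\frak g_{y,0+:f}|$, whose logarithm is $\len\frak g^0_{y,0+:s_0}+\len\frak g^{\underline\RSS}_{y,0+:h}$. The denominator $[G^0_{y,0+}:G^0_{y,s_0}]$ equals $\exp_q\len\frak g^0_{y,0+:s_0}$ by \Cref{thm15} applied to the length-zero sequence $(\ul G^0)$. Finally, for each $i$ the index $[J^{i+1}:J^{i+1}_+]$ compares the admissible sequences $(r_i,s_i)$ and $(r_i,s_i+)$ for the length-one twisted Levi sequence $\ul G^i\subsetneq\ul G^{i+1}$; because $s_i=r_i/2$, one checks that this pair satisfies \cref{eq14}, so \Cref{thm10} applies and gives $[J^{i+1}:J^{i+1}_+]=\exp_q\len\frak g^{\underline\RS_{i+1}\setminus\underline\RS_i}_{y,s_i:s_i+}$; summing over~$i$ collapses this to $\len\frak g^{\underline\RSS}_{y,h:h+}$.

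After these conversions the summand $\len\frak g^0_{y,0+:s_0}$ contributed by the numerator cancels against the denominator, so $\log_q$ of the left-hand side equals
\[
\tfrac12\len\frak g^0_{y,0:0+}+\len\frak g^{\underline\RSS}_{y,0+:h}+\tfrac12\len\frak g^{\underline\RSS}_{y,0:0+}+\tfrac12\len\frak g^{\underline\RSS}_{y,h:h+}.
\]
I would then apply \Cref{thm18} to the negation-closed set $\underline\RSS$ and the function~$h$. Its hypothesis $h(\ul\alpha)\in\tfrac12\ord(k_{\ul\alpha}^\times)$ holds on $\underline\RSS$ because $h(\ul\alpha)=r_i/2$ there and, by \Cref{thm22} applied to the $\ul G^{i+1}$-generic character $\phi_i$ of depth~$r_i$ (the datum is generic), $r_i\in\ord(k_{\ul\alpha}^\times)$. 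Hence the last three displayed terms collapse to $\sum_{\ul\alpha\in\underline\RSS}[k_{\ul\alpha}:k]\,h(\ul\alpha)$, which equals $\sum_{i=0}^{d-1}s_i(|\RS_{i+1}|-|\RS_i|)=\tfrac12\sum_{i=0}^{d-1}r_i(|\RS_{i+1}|-|\RS_i|)$ since $[k_{\ul\alpha}:k]$ is the cardinality of the Galois orbit~$\ul\alpha$. Exponentiating yields the claimed identity.

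The main obstacle is the bookkeeping in the first step: correctly matching each group-theoretic index with a subquotient of~$\frak g$, being careful about the indexing of the truncated sequence $\vec{\ul G}^{[1,d]}$ --- whose associated function assigns $s_0$ to \emph{all} of $\underline\RS_1$, not only to $\underline\RS_1\setminus\underline\RS_0$ --- and verifying in each case that the appropriate Moy--Prasad isomorphism genuinely applies. In particular, the pair $(r_i,s_i)\le(r_i,s_i+)$ is not weakly increasing, so one cannot invoke \Cref{thm57} there; it is precisely the relation $s_i=r_i/2$ that makes \cref{eq14} hold and lets one use \Cref{thm10}. Once these identifications are pinned down, the cancellation and the appeal to \Cref{thm18} are formal.
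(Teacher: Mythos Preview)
Your proposal is correct and follows essentially the same route as the paper's proof: convert each index to a Lie-algebra length via \Cref{thm15} and \Cref{thm10}, cancel the $\frak g^0_{y,0+:s_0}$ contribution, and recognize the remaining sum over $\underline\RSS=\underline\RS\setminus\underline\RS_0$ as the left side of \Cref{thm18}, whose hypothesis you check via \Cref{thm22}. Your treatment is in fact slightly more careful than the paper's about the $0$ versus $0{+}$ endpoints and about why \Cref{thm10} (rather than \Cref{thm57}) is needed for $[J^{i+1}:J^{i+1}_+]$.
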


\begin{proof}
Let $f=f_{(s_0,\dots,s_{d-1})}$
for the twisted Levi sequence~$\vec{\ul G}^{[1,d]}$.
By \cref{thm10},
\[
[J^{i+1}:J^{i+1}_+]
= |(\frak g^i,\frak g^{i+1})_{y,(r_i,s_i):(r_i,s_i+)}|
= \bigl|(\frak g^{i+1})^{\underline\RS_{i+1}
\setminus\,\underline\RS_i}_{y,s_i:s_i+}\bigr|
= \exp_q\Bigl(\sum_{\ul\alpha\in\underline\RS_{i+1}
\!\setminus\,\underline\RS_i}
\len\frak g^{\ul\alpha}_{y,f:f+}\Bigr).
\]
By \cref{thm15},
\[
[G_{y,0+}:\vec G^{[1,d]}_{y,(s_0,\dots,s_{d-1})}]
= |\vec G^{[1,d]}_{y,(0+,\dots,0+):(s_0,\dots,s_{d-1})}|
= \exp_q\Bigl(
\len \frak g^0_{y,0:s_0} +
\sum_{i=0}^{d-1}\sum_{\underline\RS_{i+1}\!\setminus\,\underline\RS_i}
\len\frak g^{\ul\alpha}_{y,0:f}
\Bigr)
\]
and
$[G^0_{y,0+}:G^0_{y,s_0}]
= \exp_q\len\frak g^0_{y,0:s_0}$,
so that
\[
\frac{[G_{y,0+}:\vec G^{[1,d]}_{y,(s_0,\dots,s_{d-1})}]}%
{[G^0_{y,0+}:G^0_{y,s_0}]}
= \exp_q\Bigl(
\sum_{i=0}^{d-1}\sum_{\underline\RS_{i+1}\!\setminus\,\underline\RS_i}
\len\frak g^{\ul\alpha}_{y,0:f}
\Bigr)
= \exp_q\Bigl(
\sum_{\ul\alpha\in\underline\RS\setminus\,\underline\RS_0}
\len\frak g^{\ul\alpha}_{y,0:f}
\Bigr).
\]
We now recognize that
\[
\frac{|\frak g_{y,0:0+}|^{1/2}}%
{|\frak g^0_{y,0:0+}|^{1/2}}
\cdot
\frac{[G_{y,0+}:\vec G^{[1,d]}_{y,(s_0,\dots,s_{d-1})}]}%
{[G^0_{y,0+}:G^0_{y,s_0}]}
\prod_{i=0}^{d-1} [J^{i+1}:J^{i+1}_+]^{1/2}
\]
equals $\exp_q$ of
\[
\sum_{\ul\alpha\in\underline\RS\setminus\,\underline\RS_0}
\sideset{}{'}\sum_{0 \leq t \leq f(\ul\alpha)}
\len\frak g^{\ul\alpha}_{y,t:t+}.
\]
By \cref{thm22}
the hypotheses of \cref{thm18}
are satisfied, and the expression above becomes
\[
\tfrac12\sum_{i=0}^{d-1} r_i(|\RS_{i+1}| - |\RS_i|)
\]
The proof is finished by recalling that
$|\frak g^0_{y,0:0+}|^{1/2}
= \exp_q\bigl(\tfrac12\len(\frak g^0_{y,0:0+})\bigr)$.
\end{proof}

We now make the reduction promised in \Cref{thm25}.
Recall the notation for $\overline\Psi$
defined immediately before \Cref{thm25}.

\setcounter{theoremx}{0}
\begin{theoremx}
\label{thm45}
Let $\ul G$ be a reductive $k$-group
and let $\Psi$ be a generic cuspidal $\ul G$-datum
with associated supercuspidal representation~$\pi$.
Then
\[
\deg(\pi,\mu)
= \frac{\dim\rho}{[G^{\tn a,0}_{[y]}:G^{\tn a,0}_{y,0+}]}
\exp_q\biggl(
\tfrac12\dim\ul G^{\tn a}
+ \tfrac12\dim\ul G^{\tn a,0}_{y,0:0+}
+ \tfrac12\sum_{i=0}^{d-1} r_i(|\RS_{i+1}| - |\RS_i|)
\biggr).
\]
\end{theoremx}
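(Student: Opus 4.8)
The plan is to assemble the theorem from two facts already established for the case of anisotropic center — the unsimplified expression~\cref{eq29} for the formal degree and the concave-function identity \cref{thm30} that collapses its middle factor — and then to pass from anisotropic center to the general case by means of the reduction lemma \cref{thm25}. Everything substantive (the length computation \cref{thm18} and the Bruhat--Tits theory behind \cref{thm30}) has been carried out, so what remains is bookkeeping plus the $\ul A$-reduction.

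First I would record the anisotropic-center case. If $\ul A=1$ then \cref{eq29} holds, \cref{thm30} simplifies its middle factor, and, using $q^{(\dim\ul G)/2}=\exp_q(\tfrac12\dim\ul G)$ together with $\len\frak g^0_{y,0:0+}=\dim\ul G^0_{y,0:0+}$, one obtains
\[
\deg(\pi,\mu)=\frac{\dim\rho}{[G^0_{[y]}:G^0_{y,0+}]}\exp_q\Bigl(\tfrac12\dim\ul G+\tfrac12\dim\ul G^0_{y,0:0+}+\tfrac12\sum_{i=0}^{d-1}r_i(|\RS_{i+1}|-|\RS_i|)\Bigr).
\]
This is the asserted formula when $\ul A=1$, and in particular recovers \cref{thm61}.

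For the general case I would run the computation on $\overline\Psi$, the reduction of $\Psi$ modulo $\ul A$, whose group $\ul G^{\tn a}=\ul G/\ul A$ has anisotropic center. By \cref{thm20} and the dimension count built into Yu's construction — $\dim\tau=\dim\rho\cdot\prod_{i=0}^{d-1}[J^{i+1}:J^{i+1}_+]^{1/2}$, since $\dim\rho_i=[J^{i+1}:J^{i+1}_+]^{1/2}$ and the inflation and Weil--Heisenberg steps preserve dimension — we have $\deg(\pi,\mu)=\dim\tau/\vol(K/(K\cap A),\mu)$. Since $\ul A$ is central and split it fixes $[y]$, so $A\subseteq K^0\subseteq K$ and $K/(K\cap A)=K/A$; by \cref{thm25} this equals $K_{\overline\Psi}$, and $\mu$ is the Gan--Gross measure that also governs the anisotropic-center computation applied to $\overline\Psi$. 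By \cref{thm25} again, each index $[J^{i+1}:J^{i+1}_+]$ is unchanged when $\Psi$ is replaced by $\overline\Psi$. Hence $\deg(\pi,\mu)$ equals the right-hand side of the anisotropic-center formula evaluated for $\overline\Psi$; since $\RS(\ul G^{\tn a,i},\ul T^{\tn a})=\RS(\ul G^i,\ul T)$ — a central quotient does not change the root system — the differences $|\RS_{i+1}|-|\RS_i|$ are the same, and substituting gives the stated formula.

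The main obstacle, modest given the preparation, is the $\ul A$-reduction itself: one must verify that $A$ genuinely lies inside $K$, that the Haar measure survives passage to $G/A$, and that both $\dim\tau$ and each index $[J^{i+1}:J^{i+1}_+]$ transport correctly. All of this is exactly what \cref{thm25} is built to supply, so once it is invoked with care the proof closes by substitution; the genuinely hard analytic input, the length identity \cref{thm18}, is already in hand.
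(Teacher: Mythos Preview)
Your proposal is correct and follows essentially the same approach as the paper: first establish the formula in the anisotropic-center case by combining \cref{eq29} with \cref{thm30}, then reduce the general case to $\overline\Psi$ via \cref{thm25}, using that $\dim\tau$ and $\vol(K/A,\mu)$ both transport along the reduction. Your write-up is slightly more explicit than the paper's about why $A\subseteq K$ and why the root-system differences are unchanged by the central quotient, but the argument is the same.
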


\begin{proof}
When $\ul A=1$, the formula follows from
\cref{eq29}, \cref{thm30},
and the fact that $\frak g^0_{y,0:0+}$
is the Lie algebra of~$\ul G^{\tn a,0}_{y,0:0+}$.
In general, according to \cref{eq24},
with the exception of the factor~$\dim\rho$,
the formal degree depends only on the underlying
$G$-datum without representations
$\Psi=(\vec{\ul G},y,\vec r)$.
It now suffices to observe that
by \cref{thm25}, the expression
\[
\frac{\prod_{i=0}^{d-1} [J^{i+1}:J^{i+1}_+]^{1/2}}{\vol(K/A,\mu)}
\]
is the same for both~$\Psi$ and the reduced datum
$\overline\Psi\defeq(\overrightarrow{\ul G^{\tn a}},y,\vec r\,)$.
\end{proof}

\subsection{Regular supercuspidals}
\label{sec:aut:reg}
In the special case where the supercuspidal
representation is regular,
we can further simplify the expression
for its formal degree.
We have already seen, in \Cref{sec:llc:reps},
the source of this simplification:
an arbitrary depth-zero supercuspidal~$\pi$
is compactly induced from 
a finite-dimensional representation~$\rho$
of the group $G_{[y]}$,
but when the supercuspidal~$\pi$ is regular,
we can recover a maximal torus~$S$ from~$\pi$,
and $\pi$ is induced from
a finite-dimensional representation~$\eta$
of the smaller group $SG_{y,0}$.
In fact, the former representation
is induced from the latter:
\begin{equation}
\label{eq42}
\rho = \Ind^{G_{[y]}}_{SG_{y,0}}\eta.
\end{equation}

In the depth zero case,
replacing $\rho$ by~$\eta$
in the formula for the formal degree
thus multiplies the rest of the formula
by the index $[G_{[y]}:SG_{y,0}]$.
And since $\eta$ is an extension
of a Deligne-Lusztig representation,
the literature provides a formula
for its dimension.
These observations simplify
the formal degree for depth-zero regular supercuspidals.

When the regular supercuspidal 
has positive depth, however,
there is a slight complication:
\Cref{eq42} must be propagated
from~$G_{[y]}=K^0$ to~$K$.
And to propagate the formula,
we need to understand the inflation procedure
mentioned in passing in \Cref{sec:aut:yu}.
Nonetheless, inflation is compatible with induction
in the most straightforward way,
and in the end, the effect
on the formula for the formal degree is the same.

Yu's inflation procedure is quite simple;
we explain it following
Hakim and Murnaghan \cite[Section~3.4]{hakim_murnaghan08}.
Recall \Cref{eq41},
that $K^{i+1} = K^iJ^{i+1}$.
Suppose we are given a representation~$\rho$ of~$K^i$
satisfying the following condition.
\begin{equation}
\label[condition]{eq43}
\tn{The restriction of $\rho$ to $K^i\cap J^{i+1}$ is $1$-isotypic.}
\end{equation}
Ultimately we will apply the following analysis
to the representation $\rho_i$,
which satisfies \Cref{eq43}.
Hence $\rho$ may be interpreted as (the inflation of)
a representation of the quotient group
$K^i/(K^i\cap J^{i+1})$.
Since $K^i$ normalizes~$J^{i+1}$,
the decomposition of \Cref{eq41}
becomes a semidirect product
after dividing by $K^i\cap J^{i+1}$.
We can now inflate this representation
of the quotient first to
the semidirect product $K^{i+1}/(K^i\cap J^{i+1})$,
then to the full group~$K^{i+1}$.
Let
\[
\Inf_{K^i}^{K^{i+1}}\rho
\]
denote the resulting representation.
Since this representation
is $1$-isotypic on~$J^{i+1}$,
and since $K^{i+1}\cap J^{i+2}\subseteq J^{i+1}$,
the representation satisfies
\Cref{eq43} with
$i$ replaced by $i+1$.
By induction, we can therefore define
for any $j\geq i+1$,
and in particular for $j=d$ (if $i<d$),
the inflated representation
\[
\Inf^{K^j}_{K_i}\rho
\defeq \Inf^{K^j}_{K^{j-1}}\cdots
\Inf^{K^{i+1}}_{K^i}\rho.
\]

\begin{lemma}
\label{thm44}
Let $\Psi$ be a cuspidal $\ul G$-datum;
recall the notations of \Cref{sec:aut:yu}
for the various objects attached to~$\Psi$.
Let $H^i \defeq SG_{y,0}^0K^i_+$
and let $H\defeq H^d$.
Suppose that there is
a (necessarily irreducible)
representation~$\eta$ of~$H^0$ such that
$\rho\defeq\Ind_{H^0}^{K^0}\eta$.
Then $\sigma_{-1}\defeq\Inf_{H^0}^H\eta$ is defined
and there is a canonical identification
\[
\tau = \Ind_H^K
(\sigma_{-1}\otimes\tau_0|_H\otimes\cdots\otimes\tau_d|_H)
\]
of representations of $K$.
\end{lemma}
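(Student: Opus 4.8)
The heart of the matter is that Yu's inflation procedure commutes with induction; everything else is formal. So the plan is as follows. First I would record the projection (``tensor identity'') formula: for an open finite-index subgroup $H$ of $K$, a finite-dimensional representation $V$ of $H$, and a finite-dimensional representation $W$ of $K$, there is a natural isomorphism $(\Ind_H^K V)\otimes W\cong\Ind_H^K(V\otimes W|_H)$. Granting the single identity $\tau_{-1}=\Ind_H^K\sigma_{-1}$ together with the fact that $\sigma_{-1}\defeq\Inf_{H^0}^H\eta$ is defined, one applies this with $V=\sigma_{-1}$ and $W=\tau_0\otimes\cdots\otimes\tau_d$ to obtain the displayed formula, since $\tau=\tau_{-1}\otimes\tau_0\otimes\cdots\otimes\tau_d$. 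Thus the whole \lcnamecref{thm44} reduces to showing that $\sigma_{-1}$ is defined and that $\Inf_{K^0}^K\rho=\Ind_H^K\sigma_{-1}$, that is, that the inflation procedure of \Cref{sec:aut:reg} carries the identity $\rho=\Ind_{H^0}^{K^0}\eta$ of \Cref{eq42} to the corresponding identity after inflation to $K$.

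I would prove this by induction on the number of layers, the inductive step being a one-layer statement. As explained in the discussion preceding \Cref{thm44}, $\Inf_{K^i}^{K^{i+1}}$ is simply pullback along the surjection $K^{i+1}=K^iJ^{i+1}\twoheadrightarrow K^i/(K^i\cap J^{i+1})$ with kernel $J^{i+1}$. The one-layer statement then reads: given $0\le i\le d-1$ and a representation $\eta^{(i)}$ of $H^i$ such that $\Ind_{H^i}^{K^i}\eta^{(i)}$ satisfies \Cref{eq43}, the restriction of $\eta^{(i)}$ to $H^i\cap J^{i+1}$ is trivial (so that $\Inf_{H^i}^{H^{i+1}}\eta^{(i)}$ is defined, and, as in the discussion preceding \Cref{thm44}, again satisfies \Cref{eq43} with $i$ replaced by $i+1$), and there is a canonical isomorphism
\[
\Inf_{K^i}^{K^{i+1}}\Ind_{H^i}^{K^i}\eta^{(i)}
\;\cong\;
\Ind_{H^{i+1}}^{K^{i+1}}\Inf_{H^i}^{H^{i+1}}\eta^{(i)}.
\]
Triviality of $\eta^{(i)}$ on $H^i\cap J^{i+1}$ follows from \Cref{eq43}: by Frobenius reciprocity $\eta^{(i)}$ embeds in $(\Ind_{H^i}^{K^i}\eta^{(i)})|_{H^i}$, and $H^i\cap J^{i+1}\subseteq K^i\cap J^{i+1}$. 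The induction starts with $\eta^{(0)}=\eta$, which embeds in $\rho|_{H^0}$; and $\rho$ is $1$-isotypic on the positive-depth group $G^0_{y,0+}\supseteq K^0\cap J^1$ by the definition of a cuspidal datum. Feeding $\eta^{(i)}=\Inf_{H^0}^{H^i}\eta$ through the layers yields what is needed.

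For the one-layer isomorphism I would argue as follows. Using the unique-factorization description of the groups $G_{y,f}$ from the proof of \Cref{thm48}, one has $K^i\cap J^{i+1}=H^i\cap J^{i+1}$ (both equal $(G^i)_{y,r_i}$, since $J^{i+1}\cap G^i=(G^i)_{y,r_i}$ and this group lies in both $H^i$ and $K^i$). Hence induction commutes with passing to the quotient by this group: the image of $\Ind_{H^i}^{K^i}\eta^{(i)}$ in $K^i/(K^i\cap J^{i+1})$ is the induction, from the image of $H^i$, of the image of $\eta^{(i)}$. Pulling back along $K^{i+1}\twoheadrightarrow K^i/(K^i\cap J^{i+1})$ and using that pullback along a surjection commutes with induction along a preimage subgroup, $\Inf_{K^i}^{K^{i+1}}\Ind_{K^i}^{K^i}\eta^{(i)}$ becomes the induction, from the preimage in $K^{i+1}$ of the image of $H^i$, of the pulled-back representation. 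It remains to identify that preimage subgroup with $H^{i+1}$ and that pulled-back representation with $\Inf_{H^i}^{H^{i+1}}\eta^{(i)}$; this is where the explicit description of $K^{i+1}_+$, and hence of $H^{i+1}$, must be brought in, reconciling the (a priori different) normal subgroups of $K^{i+1}$ along which the two sides' inflations are taken via a Mackey computation over $J^{i+1}\trianglelefteq K^{i+1}$ and the triviality of $\Inf_{H^i}^{H^{i+1}}\eta^{(i)}$ on $H^{i+1}\cap J^{i+1}$.

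The step I expect to be the main obstacle is precisely this last identification: a careful accounting, via the Bruhat-Tits descriptions, of which root groups occur in $S$, in $G^0_{y,0}$, in $K^i_+$, and in $J^{i+1}$ versus $J^{i+1}_+$, so as to pin down the preimage subgroup, to verify the normality of $J^{i+1}$ in $K^{i+1}$ (the ``semidirect product'' claim preceding \Cref{thm44}), and to check the intersection identities used above. Once this bookkeeping is complete, the representation-theoretic content — the projection formula and the compatibility of inflation with induction — is entirely formal.
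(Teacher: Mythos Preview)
Your proposal is correct and follows the same architecture as the paper's proof: reduce to $\tau_{-1}=\Ind_H^K\sigma_{-1}$ via the projection formula $(\Ind_H^K\pi_1)\otimes\pi_2\cong\Ind_H^K(\pi_1\otimes\pi_2|_H)$, then establish the commutation $\Ind_H^K\Inf_{H^0}^H\eta\cong\Inf_{K^0}^K\Ind_{H^0}^{K^0}\eta$. The paper simply asserts this last identification as ``canonical'' without further argument, whereas you unwind it layer by layer; the bookkeeping you flag as the main obstacle is exactly what the paper suppresses.
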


\begin{proof}
The notation $\inf_{H^i}^{H^j}$
mimics the notation $\inf_{K^i}^{K^j}$:
the same construction works
if the symbol~$K$ is replaced everywhere by $H$.
Since $\rho$ is $1$-isotypic on~$J^1$, so is $\eta$;
hence $\sigma_{-1}\defeq\inf_{H^0}^H\eta$ is defined.
There is a canonical identification
\[
\Ind^K_H\Inf^H_{H^0}\eta = \Inf^K_{K^0}\Ind^{K^0}_{H^0}\eta,
\]
so that
$\tau = (\Ind^K_H\sigma_{-1})
\otimes\tau_0\otimes\cdots\otimes\tau_d$.
The result now follows from the 
well-known \cite[Section~III.2.11]{renard10}
formula
$(\Ind^K_H\pi_1)\otimes\pi_2
= \Ind^K_H(\pi_1\otimes(\pi_2|_H))$,
where $\pi_1$ is a representation of~$H$
and $\pi_2$ is a representation of~$K$.
\end{proof}

Kaletha showed
\cite[Section~3.4.4, proof of Lemma~3.4.20]{kaletha19a}
that for a regular Yu datum,
the finite-dimensional representation~$\rho$
has the property that
$\rho=\Ind_{SG^0_{y,0}}^{K^0}\eta$
where $\eta$ extends the inflation to $G^0_{y,0}$
of a Deligne-Lusztig induced representation
of the group $G^0_{y,0:0+}$.
We can thus compute $\dim\eta$
using Deligne-Lusztig theory.

Let's briefly recall the dimension formula
for a Deligne-Lusztig induction.
In this paragraph only,
let $\underline G$ be a reductive $\kappa$-group,
let $\underline S$ be a maximal
elliptic torus of~$\underline G$,
and let $\theta:S\to\C^\times$
be a character.
Deligne and Lusztig computed the dimension
of the virtual representation $\tn R_{(S,\theta)}$
in their original paper
\cite[Corollary~7.2]{deligne_lusztig76};
it is
\[
\dim\tn R_{(S,\theta)}
= \frac{[G:S]}{\dim\tn{St}_G}
\]
where $\tn{St}_G$ is the Steinberg
representation of~$G$.
In his classic book on representations
of finite groups of Lie type,
Carter computed
\cite[Corollary~6.4.3]{carter85}
the dimension of the Steinberg representation;
it is
\[
\log_q\dim\tn{St}_G = \tfrac12
(\dim \ul G-\dim\ul S).
\]
We can now assemble these results
to make our final formula.
Recall the notation of \Cref{thm45}.

\begin{corollary}
Let $\Psi$ be a regular generic cuspidal $\ul G$-datum
with resulting supercuspidal representation~$\pi$
and let $\ul S$ be the maximally unramified
maximal torus of~$\ul G^0$ resulting from~$\Psi$,
as explained in \Cref{sec:llc:reps}.
Then
\begin{equation}
\label{eq10}
\deg(\pi,\mu)
= 
|S^{\tn a}_{0:0+}|^{-1}
\exp_q\Bigl(
\tfrac12\dim\ul G^{\tn a}
+ \tfrac12\rank\ul G^{\tn a,0}_{y,0:0+}
+ \sum_{i=0}^{d-1} s_i(|\RS_{i+1}| - |\RS_i|)\Bigr).
\end{equation}
\end{corollary}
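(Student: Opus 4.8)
The plan is to substitute into \Cref{thm45} the value of $\dim\rho$ furnished by Deligne--Lusztig theory, and then to juggle a few parahoric indices. Both \Cref{thm45} and \Cref{eq10} are unchanged under reduction modulo~$\ul A$ -- for \Cref{eq10} this is manifest, and for \Cref{thm45} it is the reduction step already carried out in its proof, the factor~$\dim\rho$ being $\ul A$-invariant -- so I may assume $\ul A = 1$, hence $\ul G^{\tn a} = \ul G$, $\ul S^{\tn a} = \ul S$, and so on. In the exponent of \Cref{thm45} the summand $\tfrac12\sum_i r_i(|\RS_{i+1}| - |\RS_i|)$ is already $\sum_i s_i(|\RS_{i+1}| - |\RS_i|)$ since $s_i = r_i/2$, so only two changes remain: the prefactor $\dim\rho/[G^0_{[y]}:G^0_{y,0+}]$ must become $|S_{0:0+}|^{-1}$, and the summand $\tfrac12\dim\ul G^0_{y,0:0+}$ must become $\tfrac12\rank\ul G^0_{y,0:0+}$. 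Both follow once $\dim\rho$ is computed.

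By \Cref{eq42} and Kaletha's identification of~$\eta$ recalled just above, $\rho = \Ind_{SG^0_{y,0}}^{G^0_{[y]}}\eta$, where $\eta$ restricts on~$G^0_{y,0}$ to the inflation of a Deligne--Lusztig representation $\pm\tn R_{(\bar S,\bar\theta)}$ of the reductive quotient~$\ul G^0_{y,0:0+}$, with $\bar S$ -- the image of~$S(k)_0$ -- an elliptic maximal torus of that quotient. Hence $\dim\rho = [G^0_{[y]}:SG^0_{y,0}]\cdot\dim\tn R_{(\bar S,\bar\theta)}$, and the Deligne--Lusztig and Carter dimension formulas recalled above, together with $|\bar S(\kappa)| = |S_{0:0+}|$ and $\dim\bar S = \rank\ul G^0_{y,0:0+}$, give
\[
\dim\tn R_{(\bar S,\bar\theta)}
= \frac{|\ul G^0_{y,0:0+}(\kappa)|}{|S_{0:0+}|\cdot\dim\tn{St}},
\qquad
\log_q\dim\tn{St}
= \tfrac12\bigl(\dim\ul G^0_{y,0:0+} - \rank\ul G^0_{y,0:0+}\bigr).
\]
Writing $|\ul G^0_{y,0:0+}(\kappa)| = [G^0_{y,0}:G^0_{y,0+}]$ and using multiplicativity of the index along the chain $G^0_{y,0+}\subseteq G^0_{y,0}\subseteq SG^0_{y,0}\subseteq G^0_{[y]}$ collapses the prefactor:
\[
\frac{\dim\rho}{[G^0_{[y]}:G^0_{y,0+}]}
= \frac{[G^0_{[y]}:SG^0_{y,0}]\cdot[G^0_{y,0}:G^0_{y,0+}]}{[G^0_{[y]}:G^0_{y,0+}]\cdot|S_{0:0+}|\cdot\dim\tn{St}}
= \frac{1}{[S(k):S(k)_0]\cdot|S_{0:0+}|\cdot\dim\tn{St}}.
\]

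The final input is that $[S(k):S(k)_0] = 1$, i.e. that a maximally unramified torus equals its own parahoric; combined with the displayed identity this gives $\dim\rho/[G^0_{[y]}:G^0_{y,0+}] = 1/(|S_{0:0+}|\cdot\dim\tn{St})$, and then multiplying by $\exp_q(\tfrac12\dim\ul G^0_{y,0:0+})$ and invoking Carter's formula once more to rewrite $\exp_q(\tfrac12\dim\ul G^0_{y,0:0+})/\dim\tn{St}$ as $\exp_q(\tfrac12\rank\ul G^0_{y,0:0+})$ converts \Cref{thm45} into \Cref{eq10}. I expect the only delicate points to be the two facts about maximally unramified tori -- that $S(k) = S(k)_0$ and that $S(k)_0/S(k)_{0+}$ is precisely the group of $\kappa$-points of the maximal torus~$\bar S$ of the reductive quotient, both of which should be read off from Kaletha's treatment of depth-zero data -- together with the discipline of keeping the $\ul A$-reduction consistent with the $\ul G^{\tn a}$-notation of \Cref{thm45}; the rest is elementary index bookkeeping.
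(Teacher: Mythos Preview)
Your argument is essentially the paper's, with one organizational difference: where the paper invokes \Cref{thm44} and \Cref{thm20} to rewrite $\pi$ as compactly induced from the smaller group $H=SG^0_{y,0}K_+$ and then recomputes the formal degree from scratch, you bypass \Cref{thm44} entirely and use only the dimension identity $\dim\rho=[G^0_{[y]}:SG^0_{y,0}]\dim\eta$ coming from \Cref{eq42}. Since only the dimension of~$\rho$ enters \Cref{thm45}, your shortcut is legitimate and slightly more economical; the paper's route via \Cref{thm44} is needed elsewhere but is overkill for this corollary. The subsequent Deligne--Lusztig input and index bookkeeping are identical in both arguments.

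You are right to isolate $[S^{\tn a}(k):S^{\tn a}(k)_0]=1$ as the crux. The paper uses exactly this identification, silently: the prefactor it obtains after substituting $\dim\eta$ is $\bigl([S^{\tn a}(k):S^{\tn a}(k)_0]\cdot|S^{\tn a}_{0:0+}|\bigr)^{-1}$, yet \Cref{eq10} records only $|S^{\tn a}_{0:0+}|^{-1}$, and in \Cref{sec:com} the paper reads this prefactor back as $[S^{\tn a}(k):S^{\tn a}(k)_{0+}]^{-1}$. So both your proof and the paper's rest on the same unstated fact about the maximally unramified torus; you have simply made it visible. Note that ``maximally unramified in $G^0$'' does not force $S$ to be an unramified torus in the absolute sense, so this point genuinely requires an argument from Kaletha's setup rather than general Bruhat--Tits theory.
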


\begin{proof}
By \Cref{thm44}, \Cref{thm20},
and Kaletha's description of~$\rho$,
the formula of \Cref{thm45} remains true
if $\dim\rho$ is replaced by
\[
[K^0:SG^0_{y,0}]\dim\eta
= [G^{\tn a}_{[y]}:S^{\tn a}G^{\tn a,0}_{y,0}]\dim\eta.
\]
The dimension formula for~$\eta$
discussed in the paragraph above shows that
\begin{align*}
\dim\eta &= [G^0_{y,0}:S_0G^0_{y,0+}]
\exp_q\bigl(\tfrac12(\dim\ul G^0_{y,0:0+} 
- \dim\ul S_{0:0+})\bigr)^{-1} \\
&= [G^{\tn a,0}_{y,0}:
S^{\tn a}_0 G^{\tn a,0}_{y,0+}]
\exp_q\bigl(\tfrac12(\dim\ul G^{\tn a,0}_{y,0:0+} 
- \rank\ul G^{\tn a,0}_{y,0:0+})\bigr)^{-1}.
\end{align*}
The formula now follows.
\end{proof}

\section{Galois side}
\label{sec:gal}
Let $(S,\theta)$ be a tame elliptic pair.
We saw in \cref{sec:llc:params}
that when $\theta$ is extra regular,
such a pair can be extended
to a regular supercuspidal $L$-packet datum
$(S,\hat\jmath,\chi,\theta)$,
and that the resulting set of $\chi$-data
can then be used to form
the regular supercuspidal parameter
\[
\varphi_{(S,\theta)} \defeq {}^L j_\chi\circ{}^L \theta.
\]
Moreover, every regular supercuspidal parameter
arises in this way.
Our goal in this section is to compute the Galois
side of the formal degree conjecture
for the parameter~$\varphi=\varphi_{(S,\theta)}$.
As we mentioned in \Cref{sec:llc:params},
the group $S_\varphi^\natural$ is abelian
and thus has only one-dimensional
irreducible representations,
so that the Galois side of the conjecture
simplifies to
\[
\frac{|\gamma(0,\varphi,\tn{Ad},\psi)|}%
{|\pi_0(S_\varphi^\natural)|} .
\]
Moreover, the factor $|\pi_0(S_\varphi^\natural)|$
has been computed in the literature.
So our task is to compute
the absolute value
$|\gamma(0,\varphi,\tn{Ad},\psi)|$
of the adjoint $\gamma$-factor.

We start by reviewing in \Cref{sec:gal:fac}
the general definition of the $\gamma$-factor.
As a second preliminary step,
we work out in \Cref{sec:gal:chi}
how to base change the function~$r_\chi$
used to solve the extension problem
of \Cref{sec:llc:emb}.

To compute the adjoint $\gamma$-factor,
we give an explicit description of
the adjoint representation
attached to the $L$-parameter~$\varphi$.
It turns out that this representation
decomposes as a direct sum of two representations,
one coming from the maximal torus of the dual group
and the other from its root system.
We can thus compute the adjoint $\gamma$-factors separately,
in \nameCrefs{sec:gal:toral}~\ref{sec:gal:toral}
and \ref{sec:gal:root},
and multiply them together for the final answer,
in \Cref{sec:gal:summary}.
Beginning in \Cref{sec:gal:adj},
the start of the $\gamma$-factor computation proper,
we must fix~$\hat\jmath$ and~$\chi$
in the $L$-packet datum $(S,\hat\jmath,\chi,\theta)$
extending~$(S,\theta)$.

\subsection{Review of \texorpdfstring{$L$}{L}-, 
\texorpdfstring{$\varepsilon$}{\textepsilon}- and 
\texorpdfstring{$\gamma$}{\textgamma}-factors}
\label{sec:gal:fac}
The $\gamma$-factor of a representation~$(\pi,V)$ 
of the Weil group~$W_k$ is defined by the formula
\[
\gamma(s,\pi,\psi,\mu)
\defeq \varepsilon(s,\pi,\psi,\mu)
\frac{L(1-s,\pi^\vee)}{L(s,\pi)}
\]
where $\psi$ is a nontrivial additive character of~$k$
and $\mu$ is an additive Haar measure on~$k$.
Hence the $\gamma$-factor is built from two quantities,
the $L$-factor and the $\varepsilon$-factor.

% consider citing Deligne for root numbers
In this subsection we recall the definitions 
of the $L$-factor and the $\varepsilon$-factor,
following Tate's Corvallis notes \cite{tate_corvallis2}.
Roughly speaking, the $L$-factor carries information
about the unramified part of the representation
and the $\varepsilon$-factor carries information
about the ramified part of the representation.
Since computing the absolute value of an $\varepsilon$-factor
amounts to computing an Artin conductor,
we also explain how to compute this quantity in our application,
following Chapter~VI of Serre's \emph{Local Fields}
\cite{serre79}.

The \emph{$L$-factor} of~$\pi$ is the holomorphic function
\[
L(s,\pi) \defeq \det\bigl(1 - q^{-s}\pi(\Frob) \mid V^{I_k} \bigr)^{-1}
\]
where $I_k\subset W_k$ is the inertia group
and $\Frob\in W_k$ is a Frobenius element.
Later, we use the fact that the $L$-factor is \emph{inductive}:
if $\ell\supseteq k$ is a field extension of~$k$
and $(\pi,V)$ is a finite-dimensional complex
representation of~$W_k$ then
\[
L(s,\Ind_{\ell/k}(\pi)) = L(s,\pi),
\qquad
\Ind_{\ell/k} \defeq \Ind^{W_k}_{W_\ell}.
\]

The $\varepsilon$-factor is more subtle to define
than the $L$-factor,
and most of the subtlety resides in its complex argument.
Fortunately, since we are interested
in only the absolute value of the $\gamma$-factor,
not its complex argument,
we content ourselves with a description
of the absolute value of
the $\varepsilon$-factor instead.

Changing $s$, $\psi$, or $\mu$ scales
the $\varepsilon$-factor by a known quantity.
We may thus define with no loss of information
the simplified $\varepsilon$-factor
\[
\varepsilon(\pi) \defeq \varepsilon(0,\pi,\psi,\mu)
\]
where $\psi$ has level zero,
that is, $\max\{n\mid\psi(\pi^{-n}\cal O) = 1\} = 0$,
and where the Haar measure $\mu$
is self-dual with respect to~$\psi$.
With these conventions,
the absolute value
of the $\varepsilon$-factor is
\begin{equation}
\label{eq52}
|\varepsilon(\pi)|^2 = q^{\cond\pi}
\end{equation}
where $\cond \pi$ is the Artin conductor of~$\pi$.
So computing the absolute value
of the $\varepsilon$-factor
amounts to computing the Artin conductor.

The Artin conductor is defined by the following procedure.
Given a Galois representation~$(\pi,V)$,
choose a Galois extension~$\ell$ of~$k$
such that $\pi|_{W_\ell}$ is trivial,
and let $\Gamma\!_{\ell/k}$ be the Galois group of $\ell$ over~$k$.
Then the Artin conductor satisfies the formula
\[
\cond\pi = \sum_{i\geq0}
\frac{\codim(V^{\Gamma\!_{\ell/k,i}})}%
{[\Gamma\!_{\ell/k,0}:\Gamma\!_{\ell/k,i}]},
\]
where $i\mapsto G_{\ell/k,i}$ 
is the lower numbering filtration.
This formula is independent of the choice of~$\ell$.
We can now extend the definition
to all complex representations of the Weil group,
not necessarily those of Galois type,
by stipulating that the Artin conductor
be unchanged by unramified twists.
In particular, $\cond\pi = 0$ if and only if $\pi$ is unramified,
and 
\begin{equation}
\label{eq55}
\cond\pi = \codim V^{I_k}
\end{equation}
if $\pi$ is tamely ramified.
Heuristically,
the numerical invariant $\cond\pi$ 
is an enhancement of \Cref{eq55}
that takes wild ramification into account
and measures the extent to which
$\pi$ ramifies.

When $\pi$ is irreducible,
this heuristic is made precise by the formula
\begin{equation}
\label{eq53}
\cond\pi = (\dim\pi)(1 + \depth_k\pi).
\end{equation}
In particular, \Cref{eq53} holds
if  $\pi$ is a character.
For our application, 
we need only understand
how to compute the Artin conductor
of a tamely induced representation.
Unlike the $L$-factor, the Artin conductor
is not invariant under induction.
The best we can say in general is that
given a finite extension~$\ell$ of~$k$
and a representation~$\pi$ of~$W_\ell$,
the induced representation has conductor
\begin{equation}
\label{eq54}
\cond\Ind_{\ell/k}\pi
= \ord_k(\disc_{\ell/k})\dim\pi + f_{\ell/k}\cond\pi,
\end{equation}
where $\disc_{\ell/k}$ is the discriminant
of~$\ell$ over~$k$.
But when $\ell$ is tamely ramified over~$k$
and $\pi=\chi$ is a character the formula simplifies considerably,
even if, unlike in \Cref{eq53},
the induced representation is reducible.

\begin{lemma}
\label{thm12}
Let $\ell\supseteq k$ be a finite tame extension
and let $\chi:W_\ell\to\C^\times$ be a character.
Then
\[
\cond\Ind_{\ell/k}\chi = [\ell:k](1 + \depth_k\chi).
\]
\end{lemma}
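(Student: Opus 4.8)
The plan is to reduce to the general conductor–induction formula \Cref{eq54} and then evaluate its two terms using tameness. Since $\chi$ is a character, $\dim\chi = 1$, so \Cref{eq54} gives
\[
\cond\Ind_{\ell/k}\chi = \ord_k(\disc_{\ell/k}) + f_{\ell/k}\cond_\ell\chi,
\]
where I write $\cond_\ell\chi$ to stress that the Artin conductor of~$\chi$ on the right is the classical one, normalized by~$\ord_\ell$ — that is the normalization under which \Cref{eq54,eq53} are valid over the base field~$\ell$. It remains to compute the two summands on the right-hand side.

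For the discriminant term, tameness of~$\ell/k$ gives $\ord_k(\disc_{\ell/k}) = f_{\ell/k}(e_{\ell/k}-1)$: the different of~$\ell$ over~$k$ has exponent $e_{\ell/k}-1$, and one takes the norm down to~$k$ (see Serre's \emph{Local Fields} \cite{serre79}, already cited above). For the conductor term, \Cref{eq53}, applied to the character~$\chi$ over the field~$\ell$, gives $\cond_\ell\chi = 1 + \depth_\ell\chi$, the depth here being computed with the $\ord_\ell$-normalized upper numbering filtration on~$W_\ell$. This is the one genuinely delicate point, and the reason the choice of valuation normalization was flagged in \Cref{sec:not:field}: the $\depth_k$ in the statement of the \lcnamecref{thm12} uses instead the $\ord_k$-normalized filtration on~$W_\ell$, and because $\ell/k$ is tame, passing between the two normalizations rescales the jumps of that filtration by~$e_{\ell/k}^{-1}$; hence $\depth_\ell\chi = e_{\ell/k}\,\depth_k\chi$.

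Substituting both evaluations,
\[
\cond\Ind_{\ell/k}\chi
= f_{\ell/k}(e_{\ell/k}-1) + f_{\ell/k}\bigl(1 + e_{\ell/k}\,\depth_k\chi\bigr)
= f_{\ell/k}\,e_{\ell/k}\bigl(1 + \depth_k\chi\bigr)
= [\ell:k]\bigl(1 + \depth_k\chi\bigr),
\]
which is the \lcnamecref{thm12}. None of the individual ingredients is hard; the substantive point is the careful bookkeeping of the two valuation normalizations — the conductor and depth appearing on the right of \Cref{eq54,eq53} are $\ord_\ell$-normalized, whereas the statement is phrased with the $\ord_k$-normalized $\depth_k$, and it is precisely the tame rescaling of the upper numbering filtration that reconciles them. (Here \Cref{eq53} is applied with $\chi$ ramified, which is the case that occurs in the applications.)
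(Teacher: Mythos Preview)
Your proof is correct and follows essentially the same route as the paper's: both apply \Cref{eq54}, compute the discriminant exponent via tameness (your $f_{\ell/k}(e_{\ell/k}-1)$ is the paper's $[\ell:k]-f_{\ell/k}$), invoke \Cref{eq53}, and then use the rescaling $\depth_\ell\chi = e_{\ell/k}\depth_k\chi$. Your explicit flagging of the normalization issue and of the implicit ramified hypothesis on~$\chi$ is a welcome clarification that the paper's proof leaves tacit.
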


\begin{proof}
Check, using the tameness of the extension, that
$\ord_k\disc_{\ell/k} = [\ell:k] - f_{\ell/k}$.
This computation together with \Cref{eq53,eq54} yields
\[
\cond\Ind_{\ell/k}\chi
= [\ell:k] + f_{\ell/k}\depth_\ell\chi.
\]
Now use that $\depth_\ell\chi=e_{\ell/k}\depth_k\chi$.
\end{proof}

The $L$-factor, $\varepsilon$-factor, and Artin conductor
are additive in the sense that
\[
L(s,\pi) = L(s,\pi_1)L(s,\pi_2),
\qquad
\varepsilon(\pi)
= \varepsilon(\pi_1)\varepsilon(\pi_2),
\qquad
\cond\pi = \cond\pi_1 + \cond\pi_2
\]
where $\pi=\pi_1\oplus\pi_2$.
Hence the $\gamma$-factor is additive as well.
This simple but crucial fact allows us
to restrict our attention to summands
of the adjoint representation.

\subsection{Base change for \texorpdfstring{$\chi$}{\textchi}-data}
\label{sec:gal:chi}
The main goal of this subsection is to 
determine how $\chi$-data behave under base change,
that is, restriction to the Weil group
of a finite separable extension of~$k$.
Once we understand the effect of base change
for arbitrary $\chi$-data,
we study its effect on minimally ramified $\chi$-data.

Most of the definitions of this subsection
are due to Langlands and Shelstad
\cite[Section~2.5]{langlands_shelstad87},
but our treatment is also influenced
by Kaletha's recent reinterpretation
of Langlands and Shelstad's formalism \cite{kaletha19b}.

Let $\ell$ be a separable quadratic extension of~$k$.
Local class field theory shows that
the quotient $k^\times/N_{\ell/k}(\ell^\times)$
is cyclic of order two.
The \emph{quadratic sign character} 
of the extension $\ell\supseteq k$
is the character $k^\times\to\{\pm1\}$
given by projection onto this quotient.

A root $\alpha\in\RS(G,S)$
is \emph{symmetric} if it is Galois-conjugate to~$-\alpha$,
and is \emph{asymmetric} otherwise.
A symmetric root~$\alpha$ is \emph{unramified}
if the quadratic extension $k_\alpha\supset k_{\pm\alpha}$
is unramified and is \emph{ramified} otherwise.
Letting $k_{\pm\alpha}$ denote
the fixed field of the stabilizer
in~$\Gamma\!_k$ of~$\{\pm\alpha\}$,
the extension $k_{\pm\alpha}\subseteq k_\alpha$
has degree two if $\alpha$ is symmetric
and degree one if $\alpha$ is asymmetric.

\begin{definition}
\label{thm66}
Let $\RS=\RS(G,S)$.
A \emph{set of $\chi$-data} for~$(S,G)$ 
(or just $S$ if $G$ is understood) is a collection
$\chi=(\chi_\alpha:k_\alpha^\times\to\C^\times)_{\alpha\in\RS}$
of characters satisfying the following properties.
\begin{enumerate}
\item
$\chi_{-\alpha} = \chi_\alpha^{-1}$.
\item
$\chi_{\sigma\alpha}=\chi_\alpha\circ\sigma^{-1}$
for all $\sigma\in\Gamma\!_k$.
\item
If $\alpha$ is symmetric then $\chi_\alpha$
extends the quadratic sign character
of $k_\alpha\supset k_{\pm\alpha}$.
\end{enumerate}
\end{definition}

Kaletha has interpreted a set of $\chi$-data
as giving rise to a character
of a certain double cover of a torus,
and the function~$r_\chi$ as
the $L$-parameter of this character
\cite[Section~3]{kaletha19b}.
In light of that interpretation,
we would expect that restricting~$r_\chi$
to an extension of~$k$
corresponds to composing the $\chi$-data with the norm map,
in analogy with the discussion from \cref{sec:not:base}.
This turns out to be the case, as \Cref{thm32} shows.

\begin{definition}
\label{thm68}
Let $\chi$ be a set of $\chi$-data
and let $\ell$ be a finite separable extension of~$k$.
The \emph{base change} of~$\chi$ to~$\ell$ is
the $\chi$-datum~$\chi_\ell$ defined by
$\chi_{\ell,\alpha}\defeq\chi_{\alpha,\ell_\alpha/k_\alpha}$.
\end{definition}

The definition of base change makes sense only if
the formula for~$\chi_\ell$ defines a $\chi$-datum.
We should immediately check this.

\begin{lemma}
The function~$\chi_\ell$ of \Cref{thm68}
is a set of $\chi$-data.
\end{lemma}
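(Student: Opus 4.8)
The plan is to verify the three conditions of \Cref{thm66} for $\chi_\ell=(\chi_{\ell,\alpha})_{\alpha\in\RS(G,S)}$, where the root system now carries the restricted action of $\Gamma\!_\ell\subseteq\Gamma\!_k$, the field $\ell_\alpha=\bar k^{\,\Gamma\!_\ell\cap\Gamma\!_\alpha}=\ell\cdot k_\alpha$ plays the role of~$k_\alpha$, and $\ell_{\pm\alpha}=\bar k^{\,\Gamma\!_\ell\cap\Gamma\!_{\pm\alpha}}=\ell\cdot k_{\pm\alpha}$ plays the role of~$k_{\pm\alpha}$. Conditions (1) and (2) are formal. For (1): the stabilizers of $\alpha$ and $-\alpha$ agree in both $\Gamma\!_k$ and~$\Gamma\!_\ell$, so $k_{-\alpha}=k_\alpha$ and $\ell_{-\alpha}=\ell_\alpha$; since $\chi_{-\alpha}=\chi_\alpha^{-1}$, precomposing with $N_{\ell_\alpha/k_\alpha}$ gives $\chi_{\ell,-\alpha}=\chi_{\ell,\alpha}^{-1}$. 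For (2): given $\sigma\in\Gamma\!_\ell$, the element~$\sigma$ fixes~$\ell$ and carries $\ell_\alpha\to\ell_{\sigma\alpha}$ and $k_\alpha\to k_{\sigma\alpha}$, so the norm is equivariant, $\sigma\circ N_{\ell_\alpha/k_\alpha}=N_{\ell_{\sigma\alpha}/k_{\sigma\alpha}}\circ\sigma$; combining this with $\chi_{\sigma\alpha}=\chi_\alpha\circ\sigma^{-1}$ (condition (2) for~$\chi$, valid as $\sigma\in\Gamma\!_k$) yields $\chi_{\ell,\sigma\alpha}=\chi_{\ell,\alpha}\circ\sigma^{-1}$.

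The substance is condition (3). Suppose $\alpha$ is symmetric for the $\Gamma\!_\ell$-action; then it is a fortiori symmetric for the $\Gamma\!_k$-action, and $[\ell_\alpha:\ell_{\pm\alpha}]=2$. Since $\ell_\alpha=k_\alpha\cdot\ell_{\pm\alpha}$, this forces $k_\alpha\not\subseteq\ell_{\pm\alpha}$, and hence (as $[k_\alpha:k_{\pm\alpha}]=2$) that $k_\alpha\cap\ell_{\pm\alpha}=k_{\pm\alpha}$. The first step is to check that for $x\in\ell_{\pm\alpha}^\times$ one has $N_{\ell_\alpha/k_\alpha}(x)=N_{\ell_{\pm\alpha}/k_{\pm\alpha}}(x)$: comparing $k_{\pm\alpha}$-dimensions shows the multiplication map $k_\alpha\otimes_{k_{\pm\alpha}}\ell_{\pm\alpha}\to\ell_\alpha$ is an isomorphism, so the $k_\alpha$-algebra embeddings $\ell_\alpha\to\bar k$ restrict to a bijection with the $k_{\pm\alpha}$-algebra embeddings $\ell_{\pm\alpha}\to\bar k$, and the product of the $\sigma(x)$ over the former is by definition the latter norm. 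Consequently $\chi_{\ell,\alpha}$ restricted to~$\ell_{\pm\alpha}^\times$ equals $\bigl(\chi_\alpha|_{k_{\pm\alpha}^\times}\bigr)\circ N_{\ell_{\pm\alpha}/k_{\pm\alpha}}$, and by condition (3) for~$\chi$ the inner restriction is the quadratic sign character of $k_\alpha\supset k_{\pm\alpha}$.

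It then remains to identify the quadratic sign character of $k_\alpha\supset k_{\pm\alpha}$ composed with $N_{\ell_{\pm\alpha}/k_{\pm\alpha}}$ with the quadratic sign character of $\ell_\alpha\supset\ell_{\pm\alpha}$. This is the functoriality of local class field theory for the extension $\ell_{\pm\alpha}/k_{\pm\alpha}$: restriction gives an isomorphism $\tn{Gal}(\ell_\alpha/\ell_{\pm\alpha})\xrightarrow{\sim}\tn{Gal}(k_\alpha/k_{\pm\alpha})$ (injective since $\ell_\alpha=k_\alpha\cdot\ell_{\pm\alpha}$), and the norm--reciprocity square intertwines $\tn{Art}_{\ell_{\pm\alpha}}$ with $\tn{Art}_{k_{\pm\alpha}}\circ N_{\ell_{\pm\alpha}/k_{\pm\alpha}}$; projecting to the order-two quotients gives the desired equality of sign characters. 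I expect this last step --- correctly tracking the tower $k_{\pm\alpha}\subseteq k_\alpha$, $\ell_{\pm\alpha}\subseteq\ell_\alpha$ and applying reciprocity --- to be the only point requiring care; note that the ramified/unramified dichotomy of the symmetric root is irrelevant here, the quadratic sign character being defined for any separable quadratic extension. (Alternatively, via Kaletha's reinterpretation of $\chi$-data as $L$-parameters of characters of a double cover of~$S$, the lemma becomes the compatibility of that correspondence with restriction of $L$-parameters, in parallel with \Cref{sec:not:base}; but the direct argument above is self-contained.)
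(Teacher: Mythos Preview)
Your proof is correct and follows essentially the same approach as the paper: conditions (1) and (2) are dispatched formally via equivariance of the norm, and condition (3) is reduced to the functoriality of local class field theory relating the sign characters of $k_\alpha/k_{\pm\alpha}$ and $\ell_\alpha/\ell_{\pm\alpha}$. Your presentation is slightly more explicit than the paper's in that you isolate the algebraic identity $N_{\ell_\alpha/k_\alpha}|_{\ell_{\pm\alpha}^\times}=N_{\ell_{\pm\alpha}/k_{\pm\alpha}}$ via the tensor decomposition $\ell_\alpha\cong k_\alpha\otimes_{k_{\pm\alpha}}\ell_{\pm\alpha}$ before invoking reciprocity, whereas the paper folds this into the class field theory step; but the substance is the same.
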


\begin{proof}
Negation equivariance is clear.
Compatibility with the Galois group
follows from the easily verified formula
  \[
    \sigma^{-1}\circ N_{\ell_{\sigma\alpha}/k_{\sigma\alpha}}
    = N_{\ell_\alpha/k_\alpha}\circ\sigma^{-1}.
  \]
As for the third property,
if $\alpha$ is symmetric over~$\ell$
then it is also symmetric over~$k$
and the canonical map
$\Gamma_{\ell_\alpha/\ell_{\pm\alpha}}
\to\Gamma_{k_\alpha/k_{\pm\alpha}}$
is an isomorphism.
Now recall%
~\cite[(1.2.2)]{tate_corvallis2} that
the local class field theory homomorphism $W_k\to k^\times$
(whose abelianization is the Artin reciprocity isomorphism)
intertwines the inclusion $W_\ell\into W_k$
with the norm map $\ell^\times\to k^\times$.
It follows that the canonical map
  \[
    N_{\ell_\alpha/k_\alpha}(\ell_\alpha^\times)/
    N_{\ell_\alpha/k_\alpha}(\ell_{\pm\alpha}^\times)
    \to k_\alpha^\times/k_{\pm\alpha}^\times
  \]
is an isomorphism, and hence that
$\chi_{\ell,\alpha}$ extends the
quadratic sign character of 
$\ell_\alpha\supset\ell_{\pm\alpha}$.
\end{proof}

It is time to start defining the function~$r_\chi$.
The definition requires
a brief preliminary discussion
of abstract group theory.
Let $G$ be a group and~$K$ a subgroup.
A section $u:K\backslash G\to G$
of the projection $G\to K\backslash G$
-- in other words, a choice of coset representatives --
gives rise to a $K\backslash G$-indexed family
of set maps $u_x:G\to K$,
for $x\in K\backslash G$.
To define the maps~$u_x$, we write down
the element $u(x)g$ and decompose it
as a product of an element of~$K$
followed by its coset representative
in $K\backslash G$;
that is, $u_x$ is defined by the following equation:
\[
u_x(g)u(xg) = \mathop u(x)g.
\]

Before defining $r_\chi$, a word is on order
on the exact nature of the object we are defining,
since that object depends on many arbitrary choices.
A \emph{gauge} is a function $p:\RS\to\{\pm1\}$
such that $p(-\alpha) = -p(\alpha)$ for all $\alpha\in\RS$.
Our construction produces for each gauge~$p$
a cohomology class $r_{\chi,p}$
of $1$-cochains $W_k\to\widehat S$.
As the gauge~$p$ varies
the cohomology classes $r_{\chi,p}$
are not identical.
However, there is a canonical means
of relating one class to the other.
For any two gauges $p$ and~$q$,
Langlands and Shelstad constructed
a canonical $1$-cochain~$s_{q/p}$,
depending only on $\ul\RS$
and not on the choice of $\chi$-data.
By definition,
the cohomology classes~$r_{\chi,p}$,
are related by the equation
\begin{equation}
\label{eq64}
r_{\chi,q} = s_{q/p}r_{\chi,p},
\end{equation}
and the $1$-cochains $s_{q/p}$
satisfy the right compatibility conditions
to make these equations consistent
\cite[Corollary~2.4.B]{langlands_shelstad87}.
In the construction that follows
we therefore define $r_{\chi,p}$
for a particular choice of gauge,
and \Cref{eq64} then defines $r_{\chi,q}$
for every other~$q$.

We can now write down the formula defining~$r_\chi$,
making several arbitrary choices along the way.
First, choose
\begin{enumerate}
\item
a section $[\ul\alpha]\mapsto\alpha$
of the orbit map $\RS\to\ul\RS/\{\pm1\}$.
\end{enumerate}
Each $[\ul\alpha]\in\ul\RS/\{\pm1\}$
thus gives rise to
two subgroups $W_\alpha$ and $W_{\pm\alpha}$,
the stabilizers of~$\alpha$ and~$\{\pm\alpha\}$ in~$W_k$.
For each $[\ul\alpha]$, choose in addition
\begin{enumerate}
\setcounter{enumi}{1}
\item
a section $u^\alpha:W_{\pm\alpha}\backslash W_k\to W_k$, and
\item
a section $v^\alpha:W_\alpha\backslash W_{\pm\alpha}\to W_{\pm\alpha}$.
\end{enumerate}
What we call ``choosing a section''
is more commonly called ``choosing coset representatives''.
Using these choices,
define the element $r_\chi(w)$ of $\widehat S=X^*(S)_\C$ by
\begin{equation}
\label{eq37}
r_\chi(w) = \prod_{[\ul\alpha],x}
\chi_\alpha(v^\alpha_0(u^\alpha_x(w)))^{u^\alpha(x)^{-1}\alpha},
\qquad
[\ul\alpha]\in\ul R/\{\pm1\},\,x\in W_{\pm\alpha}\backslash W_k.
\end{equation}
We still have to explain the dependence on the gauge.
Use choices~(1) and~(2) above to 
define the gauge~$p:\RS\to\{\pm1\}$ by
setting $p(\beta) = 1$ if and only if
$\beta = u^\alpha(x)^{-1}\alpha$ for some
$x\in W_{\pm\alpha}\backslash W_k$.
Then \Cref{eq37} defines $r_{\chi,p}\defeq r_\chi$.
Now use \Cref{eq64} to extend the definition to all gauges.

\begin{theorem}
\label{thm32}
Let $p:\RS\to\{\pm1\}$ be a gauge,
let $\chi$ be a set of $\chi$-data for~$S$,
and let $\ell$ be a finite separable extension of~$k$.
Then
\[
r_{\chi,p}|_\ell = r_{\chi_\ell,p}
\]
for some set of auxiliary choices
((1), (2), and (3) above)
in the definition of $r_\chi$ and $r_{\chi_\ell}$.
\end{theorem}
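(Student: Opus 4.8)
The plan is to evaluate $r_{\chi,p}|_\ell$ directly from the defining formula~\Cref{eq37}, after choosing the auxiliary data in that formula over~$\ell$ first and then over~$k$ in a way adapted to it; this is legitimate because the theorem only asks that the identity hold for \emph{some} auxiliary choices. The combinatorial input is that restriction from $W_k$ to $W_\ell$ refines the orbit decomposition of~$\RS$ used in~\Cref{eq37}. Fix a representative~$\alpha$ of a class in $\ul\RS/\{\pm1\}$ over~$k$. The set $W_{\pm\alpha}\backslash W_k$ carries a right $W_k$-action, and its $W_\ell$-orbits are the double cosets in $W_{\pm\alpha}\backslash W_k/W_\ell$; for a representative~$g$ of such a double coset, put $\beta_g\defeq g^{-1}\alpha$, so that $W_{\pm\beta_g}=g^{-1}W_{\pm\alpha}g$ and $W_{\beta_g}=g^{-1}W_\alpha g$, the stabilizer of the coset $W_{\pm\alpha}g$ in~$W_\ell$ is $W_\ell\cap W_{\pm\beta_g}$, and the fixed field of $W_\ell\cap W_{\beta_g}$ is the compositum~$\ell_{\beta_g}$ of~\Cref{thm68}. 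As $[\ul\alpha]$ and~$g$ vary, the roots~$\beta_g$ -- with signs normalized so as to realize~$p$ -- run over a complete set of representatives of $\ul\RS/\{\pm1\}$ over~$\ell$, and the index pairs $([\ul\alpha],x)$ with $x\in W_{\pm\alpha}\backslash W_k$ are in bijection with the index pairs $([\beta_g],x')$ with $x'\in(W_\ell\cap W_{\pm\beta_g})\backslash W_\ell$, compatibly with the projection to $\RS/\{\pm1\}$.

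Given this, I would pick the $\ell$-side sections $u^{\beta_g},v^{\beta_g}$ first, and then define the $k$-side sections on the block of $W_{\pm\alpha}\backslash W_k$ corresponding to~$g$ by $u^\alpha(\,\cdot\,)\defeq g\,u^{\beta_g}(\,\cdot\,)$ and $v^\alpha(\,\cdot\,)\defeq g\,v^{\beta_g}(\,\cdot\,)\,g^{-1}$. A short computation then shows that for $w\in W_\ell$ and $x$ corresponding to $x'$ under the bijection above, one has $u^\alpha_x(w)=g\,u^{\beta_g}_{x'}(w)\,g^{-1}$, hence $v^\alpha_0(u^\alpha_x(w))=g\,v^{\beta_g}_0(u^{\beta_g}_{x'}(w))\,g^{-1}$, and $u^\alpha(x)^{-1}\alpha=u^{\beta_g}(x')^{-1}\beta_g$. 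The character value is handled by two facts already used in this subsection: property~(2) of \Cref{thm66} applied to conjugation by~$g$ converts $\chi_\alpha$ into $\chi_{\beta_g}$, and the compatibility of the local class field theory map with the inclusion $W_\ell\into W_k$ identifies $W_\ell\cap W_{\beta_g}\into W_{\beta_g}$ with the norm $N_{\ell_{\beta_g}/k_{\beta_g}}$; together these give $\chi_\alpha(v^\alpha_0(u^\alpha_x(w)))=\chi_{\beta_g}(N_{\ell_{\beta_g}/k_{\beta_g}}(v^{\beta_g}_0(u^{\beta_g}_{x'}(w))))=\chi_{\ell,\beta_g}(v^{\beta_g}_0(u^{\beta_g}_{x'}(w)))$. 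So the product~\Cref{eq37} for $r_{\chi,p}|_\ell$ agrees, term by term under the bijection of index sets, with~\Cref{eq37} for $r_{\chi_\ell,p}$.

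The step I expect to be the main obstacle is the symmetric-root bookkeeping in the middle ($v$) layer. Restriction to~$W_\ell$ can leave a $k$-symmetric root symmetric over~$\ell$ or make it asymmetric over~$\ell$; in the first case one must match the order-two quotients $W_{\beta_g}\backslash W_{\pm\beta_g}$ and $(W_\ell\cap W_{\beta_g})\backslash(W_\ell\cap W_{\pm\beta_g})$, and -- the genuinely delicate point -- when several double cosets~$g$ attach to the same $k$-orbit representative~$\alpha$, the single section~$v^\alpha$ is prescribed on the nontrivial coset of $W_\alpha$ by each of them, so one must verify these prescriptions are consistent. That consistency is exactly what property~(3) of \Cref{thm66} -- the requirement that $\chi_\alpha$ extend the quadratic sign character of $k_\alpha\supset k_{\pm\alpha}$, and that $\chi_{\ell,\beta_g}$ likewise extend the sign character of $\ell_{\beta_g}\supset\ell_{\pm\beta_g}$ -- is there to supply. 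The remaining points are routine: asymmetric roots contribute no $v$-layer; the normalizations $u^\alpha(0)=v^\alpha(0)=1$ can be chosen simultaneously on both sides; and a general gauge~$p$ reduces to the one realized by the chosen sections by means of \Cref{eq64}, using that the $\chi$-free cochains $s_{q/p}$ satisfy the analogous, and simpler, base-change identity.
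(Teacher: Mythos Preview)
Your overall strategy matches the paper's: use the double-coset decomposition $W_{\pm\alpha}\backslash W_k/W_\ell$ to set up a bijection of index sets and then match the products~\eqref{eq37} term by term. Your treatment of the $u$-layer (defining $u^\alpha$ blockwise as $g\,u^{\beta_g}$), the resulting identity $u^\alpha_x(w)=g\,u^{\beta_g}_{x'}(w)\,g^{-1}$, the gauge check $u^\alpha(x)^{-1}\alpha=u^{\beta_g}(x')^{-1}\beta_g$, and the final appeal to the compatibility of local class field theory with inclusion are all exactly what the paper does.

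The gap is in the $v$-layer, at precisely the spot you flag as the main obstacle, but your proposed resolution does not work. You choose the $\ell$-sections $v^{\beta_g}$ first and attempt to set $v^\alpha(\,\cdot\,)=g\,v^{\beta_g}(\,\cdot\,)\,g^{-1}$; when several $g$ with $\beta_g$ symmetric over~$\ell$ attach to the same~$\alpha$, the prescribed values on the nontrivial $W_\alpha$-coset must coincide. Property~(3) of \Cref{thm66} cannot supply this consistency: it constrains the \emph{character values}~$\chi_\alpha$, not the Weil-group elements that serve as section values. Concretely, for each such~$g$ the candidate value $g\,v^{\beta_g}(\sigma_g)\,g^{-1}$ is confined to the set $\bigl(gW_\ell g^{-1}\cap W_{\pm\alpha}\bigr)\setminus W_\alpha$; when $\ell/k$ is not Galois the conjugates $g_1W_\ell g_1^{-1}$ and $g_2W_\ell g_2^{-1}$ are generally distinct subgroups of~$W_k$, and these confined subsets of the nontrivial coset need not meet. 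No choice of the~$v^{\beta_g}$ then yields a single well-defined~$v^\alpha$, and no property of the characters can repair a failure at the level of group elements.

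The paper's remedy is to reverse the direction at this one layer only: choose $v=v^\alpha$ over~$k$ first (an arbitrary lift of the nontrivial coset), and then \emph{define} the $\ell$-sections by $v^{\beta_g}(y)\defeq g^{-1}\,v^\alpha(g\,y\,g^{-1})\,g$. With that single change your computation goes through verbatim, and neither an appeal to property~(3) nor a separate base-change identity for the cochains~$s_{q/p}$ is needed.
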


\begin{proof}
First, some preliminary notation
on group actions.
Given a right $G$-set~$X$
and elements $x,y\in X$
that lie in the same $G$-orbit,
let $x^{-1}y$, the transporter from~$x$ to~$y$,
denote the set of elements of~$G$
taking $x$ to~$y$.
If $y=gx$ then $x^{-1}y = G_xg$
where $G_x$ is the stabilizer in~$G$ of~$x$.

Let $W\defeq W_k$ and $W'\defeq W_\ell$.
It suffices to consider the case
where $\RS$ is a transitive $(\Gamma\!_k\times\{\pm1\})$-set.
The plan of the proof is to make choices
(1), (2), and~(3) for $r_\chi$ and for $r_{\chi_\ell}$
so that the equation
$r_{\chi,p}|_\ell = r_{\chi_\ell,p}$
holds on the nose, not up to cohomology.
To get equality, not just cohomology,
some of our choices depend other choices.

Fix $\alpha\in \RS$ (choice~(1) for~$r_\chi$).
The double cosets $z\in W_{\pm\alpha}\backslash W/W'$
index the $W'$-orbits of $R/\{\pm1\}$.
Choose a section $c:W_{\pm\alpha}\backslash W/W'\to W$,
and for each double coset~$z$
let $\alpha_z\defeq c(z)^{-1}\alpha$
(choice~(1) for~$r_{\chi_\ell}$),
so that the~$\alpha_z$ form a set of
representatives for the $W'$-orbits of $\RS/\{\pm1\}$.
Choose sections $u^z:W'_{\pm\alpha}\backslash W'\to W'$
(choice~(2) for~$r_{\chi_\ell}$)
and $v:W_\alpha\backslash W_{\pm\alpha}\to W_{\pm\alpha}$
(choice~(3) for~$r_\chi$),
and use them to define sections
$v^z:W'_{\alpha_z}\backslash W'_{\pm\alpha_z}
\to W'_{\pm\alpha_z}$ (choice~(3) for $r_{\chi_\ell}$)
by the formula
\[
v^z(y) = c(z)^{-1}v(c(z)y c(z)^{-1})c(z).
\]
Define the section
$u:W_{\pm\alpha}\backslash W\to W$
(choice~(2) for~$r_\chi$)
by $u(x) \defeq c(z)u^z(y)$
where $z = xW'$ and $y = (Kc(z))^{-1}x$.

We have now made all necessary choices
to define $r_\chi$ and~$r_{\chi_\ell}$.
It remains to check that these choices
define the same gauge~$p$
and that $r_{\chi,p}|_\ell = r_{\chi_\ell,p}$.

To check that the gauges agree,
first check that
the assignment $x\mapsto(z,y)$
is a bijection from $W_{\pm\alpha}\backslash W$
to the set of pairs~$(z,y)$ with
$z\in W_{\pm\alpha}\backslash W/W'$ and
$y\in W'_{\pm\alpha_z}\backslash W'$.
In the rest of the proof, we assume
that $x$ is related to $(y,z)$ by this bijection.
Hence a root is of the form
$u(x)^{-1}\alpha$ with $x\in W_{\pm\alpha}\backslash W$
if and only if it is of the form
$u^z(y)^{-1}\alpha_z$ with
$z\in W_{\pm\alpha}\backslash W/W'$ and
$y\in W'_{\pm\alpha_z}\backslash W'$.

Recall that for each $x\in W_{\pm\alpha}\backslash W$,
there is a function $u_x:W\to W_{\pm\alpha}$ obtained from~$u$
by the equation
\[
u(x)w = u_x(w)u(xw);
\]
similarly, for each $z\in W_{\pm\alpha}\backslash W/W'$ and
and $y\in W'_{\pm\alpha_z}\backslash W'$,
there is a function $u^z_y:W'\to W'_{\pm\alpha_z}$
obtained from $u^z$ by the equation
\[
u^z(y)w' = u^z_y(w')u^z(yw').
\]
These two constructions are related
in the following way.

\begin{claim}
\label{thm34}
Let $w\in W'$.
Then $u_x(w') = c(z)u^z_y(w')c(z)^{-1}$.
\end{claim}

\begin{proof}
Let $x'=xw'$, let $z'=x'W'$,
and let $y'=(Kc(z'))^{-1}x'$,
so that $(z',y')$ is obtained from~$x'$
in the same way that $(z,y)$
was obtained from~$x$.
Expand the defining equation of $u_x(w')$:
\[
c(z)u^z(y)w' = u_x(w')c(z')u^{z'}(y').
\]
Since $z=z'$ and $y'=yw'$,
\[
c(z)^{-1}u_x(y)c(z) u^z(yw') = u^z(y)w'.
\qedhere
\]
\end{proof}

Use the sections $u$ and~$v$
to compute the $L$-parameter of~$\chi$:
\[
r_\chi(w) = \prod_x
\chi_\alpha(v_0(u_x(w)))^{u(w)^{-1}\alpha},
\qquad
x\in W_{\pm\alpha}\backslash W.
\]
Assume now that $w=w'\in W'$.
Use \Cref{thm34} to simplify the expression to
\begin{align*}
r_\chi(w') &= \prod_{z,y}
\chi_\alpha(v_0(c(z)u^z_y(w')c(z)^{-1}))^{u(w')^{-1}\alpha},
\qquad
z\in W_{\pm\alpha}\backslash W/W',\;
y\in W'_{\pm\alpha_z}\backslash W' \\
&= \prod_{z,y} \chi_\alpha(c(z)^{-1}
v^z_0(u^z_y(w'))c(z))^{u(w')^{-1}\alpha}
= \prod_{z,y} \chi_{\alpha_z}
(v^z_0(u^z_y(w')))^{u^z(w')^{-1}\alpha_z}.
\end{align*}
To complete the argument,
use the property cited above
that the local class field theory homomorphism
intertwines inclusion with the norm map.
\end{proof}

\begin{remark}
Unlike most of the other parts of this paper,
\Cref{thm32} and the definitions preceding it
do not require the torus~$S$ to split
over a tamely ramified extension;
they hold for any torus whatsoever.
\end{remark}

We can use the base change formula
to bound the ramification of~$r_\chi$.

\begin{corollary}
\label{thm35}
Let $\chi$ be a set of $\chi$-data for
a tamely ramified torus~$S$.
If each $\chi_\alpha$ is tamely ramified of finite order
then $r_\chi$ is tamely ramified of finite order.
\end{corollary}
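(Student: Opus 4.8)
The plan is to combine the base change formula, \cref{thm32}, with the explicit formula~\cref{eq37} for~$r_\chi$. The ``finite order'' assertion is essentially immediate from~\cref{eq37}: since $S$ is tame the index set of that product is finite, each exponent $u^\alpha(x)^{-1}\alpha$ is a fixed character of~$S$ independent of~$w$, and each scalar $\chi_\alpha(v^\alpha_0(u^\alpha_x(w)))$ is a root of unity of order dividing that of~$\chi_\alpha$; hence $r_\chi$ takes only finitely many values, each of order dividing the least common multiple~$N$ of the orders of the~$\chi_\alpha$. (The gauge-change cochain $s_{q/p}$ of~\cref{eq64} depends only on~$\RS$ and is itself of finite order, so nothing here depends on the chosen gauge.) The real content is therefore that $r_\chi$ kills the wild inertia subgroup~$P_k$.

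First I would reduce to the split case. Let $\ell$ be the splitting field of~$S$; since $S$ is tame, $\ell/k$ is finite and tamely ramified, so $P_\ell = P_k$, and the splitting field contains every~$k_\alpha$. By \cref{thm32}, after aligning the auxiliary choices we may arrange that $r_{\chi,p}|_\ell = r_{\chi_\ell,p}$, where by \cref{thm68} one has $\chi_{\ell,\alpha} = \chi_\alpha\circ N_{\ell/k_\alpha}$ (using $\ell_\alpha = \ell$). Over~$\ell$ the torus is split, so in~\cref{eq37} the coset spaces $W_{\pm\alpha}\backslash W_\ell$ and $W_\alpha\backslash W_{\pm\alpha}$ are trivial and the formula collapses to the homomorphism
\[
r_{\chi_\ell}(w) = \prod_{[\ul\alpha]\in\ul\RS/\{\pm1\}} \chi_{\ell,\alpha}(w)^{\alpha}.
\]
Each $\chi_{\ell,\alpha}$ is tamely ramified: when $\depth_k\chi_\alpha > 0$ this is \cref{thm39}, and when $\depth_k\chi_\alpha = 0$ one checks directly that the norm of the tame extension $\ell/k_\alpha$ maps $1 + \frak p_\ell$ into $1 + \frak p_{k_\alpha}$, so $\chi_{\ell,\alpha}$ is again trivial on the first congruence subgroup. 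Hence $r_{\chi_\ell}$ is trivial on~$P_\ell$, whence $r_\chi$ is trivial on $P_k = P_\ell$, which is what we want.

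One can also argue directly from~\cref{eq37}, bypassing base change: $P_k$ is normal in~$W_k$ and, because $S$ is tame, $P_{k_\alpha} = P_k \subseteq W_\alpha\subseteq W_{\pm\alpha}\subseteq W_k$, so for $w\in P_k$ the elements $u^\alpha_x(w)$ and then $v^\alpha_0(u^\alpha_x(w))$ again lie in $P_k = P_{k_\alpha}$, on which the tamely ramified character $\chi_\alpha$ is trivial; every factor of~\cref{eq37} is then trivial, so $r_\chi(w) = 1$.

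There is no deep obstacle here — it genuinely is a corollary — but a few points need care. One should fix conventions so that ``$r_\chi$ tamely ramified of finite order'' means precisely $r_\chi(P_k) = 1$ together with the values $r_\chi(w)$ forming a finite set of finite-order elements. In the base change route one must check that the auxiliary choices for $r_\chi$ and $r_{\chi_\ell}$ can be matched to yield equality on the nose, which is exactly what \cref{thm32} supplies. In the direct route one must verify that the coset-representative maps $u^\alpha_x$ and $v^\alpha_0$ preserve~$P_k$, which is the normality argument above once one notes $P_k\subseteq W_{\pm\alpha}$. Either way, the one genuinely new small input is the behaviour of the norm of a tamely ramified extension on principal units, needed in the depth-zero case not covered by \cref{thm39}.
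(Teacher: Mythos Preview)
Your argument is correct and follows the same overall route as the paper, namely base change via \cref{thm32}. The paper's proof is shorter: instead of taking $\ell$ to be the splitting field of~$S$ and then checking that the base-changed characters $\chi_{\ell,\alpha}$ remain tamely ramified, it uses both hypotheses at once (tame ramification \emph{and} finite order) to choose a single finite tame extension~$\ell/k$ large enough that every $\chi_{\ell,\alpha}$ is already trivial; then $r_\chi|_{W_\ell} = r_{\chi_\ell}$ is identically~$1$, and both conclusions follow at once. This sidesteps your separate finite-order argument and your check on norms of principal units. Your alternative direct argument from~\cref{eq37}, tracking $P_k$ through the coset-representative maps, is a genuinely different and equally valid route that the paper does not pursue.
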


\begin{proof}
By hypothesis, there is a finite tamely ramified
extension~$\ell$ of~$k$ such that
for each $\alpha$ the character~$\chi_{\alpha,\ell}$
is trivial, so that $\chi_\ell$ is trivial.
Then $r_\chi$ restricts trivially to~$\ell$ by \Cref{thm32}
and is therefore tamely ramified of finite order.
\end{proof}

To conclude this subsection
we define minimally ramified
$\chi$-data following Kaletha
\cite[Definition~4.6.1]{kaletha19a}.
The definition is relevant to this subsection
because a minimally ramified set of $\chi$-data
satisfies the hypotheses, and thus the conclusion,
of \Cref{thm35};
we use this observation in the proof of \Cref{thm69}.

\begin{definition}
\label{thm67}
A $\chi$-datum~$\chi$ for $S$
is \emph{minimally ramified}
if $S$ is tame and in addition
$\chi_\alpha$ is trivial for asymmetric~$\alpha$,
unramified for unramified symmetric~$\alpha$,
and tamely ramified for ramified symmetric~$\alpha$.
\end{definition}

\subsection{Adjoint representation}
\label{sec:gal:adj}
Our goal in this subsection is to describe the
adjoint representation of a regular parameter,
specifically, its decomposition
into a ``toral summand'' coming from the torus~$S$
and a ``root summand'' coming from the root system~$\RS(G,S)$.
In subsequent subsections,
we compute the $\varepsilon$-factor of both summands.

Recall from \cref{sec:llc:emb}
that the regular parameter~$\varphi_{(S,\theta)}$
is given by the formula
\[
\varphi_{(S,\theta)}(w)
= \hat\jmath\bigl(\hat\theta(w)r_\chi(w)\bigr)
n(\omega_{S,G}(w))w
\]
where $\hat\jmath:\widehat S\to\widehat G$
is an admissible embedding with image
a Galois-stable maximal torus~$\widehat T$
and $\chi$ is a certain (carefully chosen)
set of minimally ramified $\chi$-data.
We obtain the adjoint representation
from $\varphi=\varphi_{(S,\theta)}$
by composing it with
the adjoint homomorphism ${}^L G\to\GL(V)$,
where $V\defeq\hat{\frak g}/\hat{\frak z}^{\Gamma\!_k}$.
The representation decomposes as a direct sum
\[
V = V_{\tn{toral}}\oplus V_{\tn{root}}
\]
where $V_{\tn{toral}}\defeq\hat{\frak t}/\hat{\frak z}^{\Gamma\!_k}$
and where
\[
V_{\tn{root}} \defeq
\bigoplus_{\alpha\in\RS(G,S)} \hat{\frak g}_\alpha.
\]
Here $\hat{\frak g}_\alpha$ is the usual
$\alpha^\vee$-eigenline for the action
of~$\widehat S$ on~$\hat{\frak g}$,
where $\alpha^\vee$ is interpreted,
via $\hat\jmath$ and the canonical identification
$X^*(\widehat T)=X_*(T)$,
as a root of $X^*(\widehat T)$.
We call $V_{\tn{toral}}$ the \emph{toral summand}
and $V_{\tn{root}}$ the \emph{root summand}.
From our formula for~$\varphi$
we can work out the adjoint Weil actions
on these summands.

\subsection{Toral summand}
\label{sec:gal:toral}
For the toral summand,
it is useful to momentarily
consider the vector space
$\widetilde V_{\tn{toral}}\defeq\Lie(\widehat T)$
equipped with the adjoint Weil action of~$\varphi$,
so that the projection
$\widetilde V_{\tn{toral}}\to V_{\tn{toral}}$
is Weil-equivariant.
In general, for any complex torus~$T$ the natural inclusion
$X_*(T)\into\Lie(T)$ gives rise to a canonical identification
$X_*(T)_\C\simeq\Lie(T)$.
The representation $\widetilde V_{\tn{toral}}$
of the Weil group~$W_k$
is therefore the complexification
of the lattice $\Lambda = X_*(\widehat T)$,
isomorphic to $X^*(S)$ by~$X^*(\hat\jmath)$
and the canonical identification
$X_*(\widehat T)=X^*(T)$.
The Galois action on the lattice~$\Lambda$ is transferred
via this chain of identifications from
the Galois action on~$X^*(S)$ arising from
the structure of~$S$ as a torus over~$k$.
To summarize, there is an identification of representations
\[
\widetilde V_{\tn{toral}} \simeq X^*(S)_\C.
\]
Although $X^*(S^{\tn a})$ is a sublattice
of $X^*(S)$, not a quotient,
since $X^*(S^{\tn a})=X^*(S)^{\Gamma\!_k}$
the smaller lattice becomes a canonical quotient
of the larger after complexifying both.
We thus have a second identification
\[
V_{\tn{toral}} \simeq X^*(S^{\tn a})_\C.
\]
We can now compute the toral $\gamma$-factor
using the lattice
\[
M \defeq X^*(S^{\tn a})^{I_k},
\]
whose complexification is the vector space
used to compute the $L$-factor of~$V_\tn{toral}$.

\begin{lemma}
\label{thm46}
$\displaystyle|\gamma(0,V_{\tn{toral}})|
= \exp_q\bigl(\tfrac12(\dim S^{\tn a} + \dim M)\bigr)
\frac{|M_\Frob|}{|(\overline\kappa^\times\otimes M^\vee)^\Frob|}$.
\end{lemma}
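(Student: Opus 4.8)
The plan is to unwind the definition
$\gamma(0,V_{\tn{toral}})=\varepsilon(V_{\tn{toral}})\,L(1,V_{\tn{toral}}^\vee)/L(0,V_{\tn{toral}})$
and evaluate the absolute value of each of the three factors separately, using the identification $V_{\tn{toral}}\simeq X^*(S^{\tn a})_\C$ established in \Cref{sec:gal:toral} and the fact that $S$ is tame, so that $W_k$ acts on $X^*(S^{\tn a})$ through a finite tame quotient and $V_{\tn{toral}}$ is tamely ramified.

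First I would compute the $\varepsilon$-factor. By \Cref{eq52,eq55}, tameness of $V_{\tn{toral}}$ gives $|\varepsilon(V_{\tn{toral}})|=\exp_q\bigl(\tfrac12\codim V_{\tn{toral}}^{I_k}\bigr)$. The $I_k$-invariants are $V_{\tn{toral}}^{I_k}=(X^*(S^{\tn a})^{I_k})_\C=M_\C$, so $\codim V_{\tn{toral}}^{I_k}=\dim S^{\tn a}-\dim M$ and $|\varepsilon(V_{\tn{toral}})|=\exp_q\bigl(\tfrac12(\dim S^{\tn a}-\dim M)\bigr)$.

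Next I would compute the two $L$-factors from $L(s,\pi)^{-1}=\det(1-q^{-s}\pi(\Frob)\mid V^{I_k})$. In the denominator, $L(0,V_{\tn{toral}})^{-1}=\det(1-\Frob\mid M_\C)$; here the key input is that $S^{\tn a}$ is anisotropic --- which is where ellipticity of the pair $(S,\theta)$ enters --- so $M^{\Frob}=X^*(S^{\tn a})^{\Gamma\!_k}=0$, the integral operator $1-\Frob$ is injective on $M$ with finite cokernel $M_\Frob$, and $|L(0,V_{\tn{toral}})|^{-1}=|\det(1-\Frob\mid M_\C)|=|M_\Frob|$. For the numerator, I would first identify $(V_{\tn{toral}}^\vee)^{I_k}\simeq(V_{\tn{toral}}^{I_k})^\vee=M_\C^\vee$ as $\Frob$-modules (duality commutes with $I_k$-invariants because $I_k$ acts through a finite group), so $L(1,V_{\tn{toral}}^\vee)^{-1}=\det(1-q^{-1}\Frob\mid M_\C^\vee)$. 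Pulling the scalar $q^{-1}$ out of the determinant and using that $\Frob$ acts on $M_\C$ with roots-of-unity eigenvalues --- so that $|\det(q-\Frob\mid M_\C)|$ is unchanged if one replaces $M$ by $M^\vee$ or $\Frob$ by $\Frob^{-1}$ --- one gets $|L(1,V_{\tn{toral}}^\vee)|^{-1}=q^{-\dim M}\,|\det(\Frob-q\mid M_\C)|$. The last determinant is exactly the number of $\kappa$-points of the torus over $\kappa$ with cocharacter lattice $M^\vee$, i.e.\ $|(\overline\kappa^\times\otimes M^\vee)^{\Frob}|$, by the standard point-count formula for tori over finite fields; hence $|L(1,V_{\tn{toral}}^\vee)|=q^{\dim M}/|(\overline\kappa^\times\otimes M^\vee)^{\Frob}|$.

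Multiplying the three pieces then gives
\[
|\gamma(0,V_{\tn{toral}})|
=\exp_q\bigl(\tfrac12(\dim S^{\tn a}-\dim M)\bigr)\cdot
\frac{q^{\dim M}/|(\overline\kappa^\times\otimes M^\vee)^{\Frob}|}{1/|M_\Frob|}
=\exp_q\bigl(\tfrac12(\dim S^{\tn a}+\dim M)\bigr)\,
\frac{|M_\Frob|}{|(\overline\kappa^\times\otimes M^\vee)^{\Frob}|},
\]
which is the asserted formula. I expect the only real obstacles to be bookkeeping ones: pinning down the Frobenius normalization consistently between the $L$-factor and the finite-field point count (geometric versus arithmetic, together with the extra factors of $q$ coming from the Galois action on roots of unity), and remembering to invoke the anisotropy of $S^{\tn a}$ so that $L(0,V_{\tn{toral}})$ is finite --- without it the toral $\gamma$-factor would not even be defined at $s=0$.
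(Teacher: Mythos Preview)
Your proposal is correct and follows essentially the same route as the paper: compute the $\varepsilon$-factor via tameness and \Cref{eq52,eq55}, evaluate the two $L$-factors as determinants on $M_\C$, interpret $\det(q-\Frob)$ as a torus point count, and multiply. You are in fact slightly more careful than the paper in two places --- you work with $L(1,V_{\tn{toral}}^\vee)$ rather than $L(1,V_{\tn{toral}})$ (the paper silently uses that $V_{\tn{toral}}$, being the complexification of a lattice, is self-dual), and you make explicit that anisotropy of $S^{\tn a}$ is what guarantees $L(0,V_{\tn{toral}})$ is finite.
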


\begin{proof}
We omit several details because
the calculation closely follows \cite[Section~5.4]{kaletha15}.

It is easy to dispense with the $L$-factor at $s=0$:
\[
L(0,V_{\tn{toral}})^{-1} 
= \det(1 - \Frob \mid M_\C)
= |M_\Frob|,
\]
where $M_\Frob$ denotes the coinvariants of Frobenius.
The $L$-factor at $s=1$ is
\[
L(1,V_{\tn{toral}})^{-1}
= \det(1 - q^{-1}\Frob \mid M_\C)
= (-q)^{-\dim M} \det(1 - q\Frob^{-1} \mid M_\C).
\]
The determinantal factor 
in the last equation can be rewritten as
\[
\det(1 - q\Frob^{-1} \mid M_\C)
= \det(1 - q\Frob \mid M_\C^\vee)
= \bigl|(\overline\kappa^\times\otimes M^\vee)^\Frob\bigr|,
\]
meaning that
$L(1,V_{\tn{toral}})^{-1}
= q^{-\dim M}\cdot
\bigl|(\overline\kappa^\times\otimes M^\vee)^\Frob\bigr|$.
Collecting the two $L$-factors gives
\[
\biggl|\frac{L(1,V_{\tn{toral}})}{L(0,V_{\tn{toral}})}\biggr|
= q^{\dim M}\frac{|M_\Frob|}%
{|(\overline\kappa^\times\otimes M^\vee)^\Frob|}.
\]

Since $S$ is tamely ramified,
\Cref{eq55} shows that the Artin conductor
of the toral summand is just 
\[
\cond V_\tn{toral}
= \dim(V_{\tn{toral}}/V_{\tn{toral}}^{I_k})
= \dim S^{\tn a} - \dim M.
\]
By our formula relating the Artin conductor
and the $\varepsilon$-factor, \Cref{eq52},
\[
|\varepsilon(V_{\tn{toral}})| =
\exp_q\bigl(\tfrac12(\dim S^{\tn a} - \dim M)\bigr). \qedhere
\]
\end{proof}

\subsection{Root summand}
\label{sec:gal:root}
The root summand is a direct sum
of representations induced from characters
of closed, finite-index subgroups of~$W_k$.
An element~$w\in W_k$ acts on~$V_{\tn{root}}$ 
through~$\varphi$ as follows.
First, the action of~$W_k$ on~$X^*(\widehat S) (=X_*(S))$
induces an action on the root system $\RS=\RS(G,S)$,
and the element $w$ permutes the root lines by this action.
Second, the toral element
\[
t_w \defeq \hat\jmath(\hat\theta(w)r_\chi(w))
\in \widehat T
\]
scales each root line~$\widehat{\frak g}_\alpha$
by $\alpha^\vee(t_w)$,
where $\alpha^\vee\in\RS^\vee(G,S)$ is interpreted
as a character of~$\widehat T$ using~$\hat\jmath$.
It follows that $V_{\tn{root}}$ is a direct sum
of monomial representations.
That is, for each Galois orbit
$\ul\alpha\in\underline\RS(G,S)$
the subrepresentation
\[
V_{\ul\alpha} \defeq \bigoplus_{\alpha\in\ul\alpha(\bar k)}
\widehat{\frak g}_\alpha
\]
is monomial and $V_{\tn{root}}$ is the direct sum
(over $\ul\RS(G,S)$) of these representations.
Further, after choosing a representative
$\alpha\in\ul\alpha(\bar k)$,
we can identify~$V_\alpha$ with 
the representation induced from the action
of~$W_\alpha$ on~$\hat{\frak g}_\alpha$,
a certain character~$\psi_\alpha$ of~$W_\alpha$.
The essential matter, then,
is to understand these characters~$\psi_\alpha$,
and specifically, as it turns out,
their depths.

Although the factor $n(\omega_{S,G}(w))w$
stabilizes the line~$\hat{\frak g}_\alpha$,
it may fail to centralize it:
instead, the factor scales the line by a certain sign
$d_\alpha(w)\in\{\pm1\}$.
It follows that $\psi_\alpha$ is the product
of three characters:
\[
\psi_\alpha(w)
= d_\alpha(w)
\cdot\langle\alpha^\vee,\hat\jmath(r_\chi(w))\rangle
\cdot\bigl\langle\alpha^\vee,
\bigl(\hat\jmath\circ\hat\theta\bigr)(w)\bigr\rangle
\]
where $\langle-,-\rangle$
denotes the evaluation pairing
$X^*(\widehat T)\otimes\widehat T\to\C^\times$.
There are two essential cases
in the analysis of this character,
depending on whether or not the character 
$\langle\alpha^\vee,(\hat\jmath\circ\hat\theta\bigr)
\bigr\rangle|_{W_\alpha}$
has positive depth.
By the local Langlands correspondence
the depth of this character
is the same as the depth of the character
$\theta_{k_\alpha/k}\circ\alpha^\vee:
k_\alpha^\times\to\C^\times$,
and we know something about these depths
from \Cref{sec:llc:pairs}.

\begin{lemma} \label{thm69}
The character
$\langle\alpha^\vee,\hat\jmath\circ r_\chi\rangle|_{W_\alpha}$
of~$W_\alpha$ is tamely ramified.
\end{lemma}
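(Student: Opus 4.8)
The plan is to deduce the lemma from the base-change analysis of $r_\chi$, in particular from \Cref{thm35}. The key observation, already flagged just before \Cref{thm67}, is that the minimally ramified set of $\chi$-data $\chi$ occurring in the datum $(S,\hat\jmath,\chi,\theta)$ satisfies the hypotheses of \Cref{thm35}: by \Cref{thm67} each $\chi_\alpha$ is trivial (asymmetric $\alpha$), unramified (unramified symmetric $\alpha$), or tamely ramified (ramified symmetric $\alpha$), and in every case tamely ramified of finite order. Since $S$ is tame, \Cref{thm35} then applies and shows that the cochain $r_\chi\colon W_k\to\widehat S$ is tamely ramified of finite order.

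Next I would unwind what ``$r_\chi$ tamely ramified'' gives at the level of the wild inertia group. Following the proof of \Cref{thm35}, there is a finite tamely ramified extension $\ell$ of $k$ with $\chi_\ell$ trivial, hence with $r_\chi|_{W_\ell}$ trivial by \Cref{thm32}. Because $\ell/k$ is tame we have $P_\ell=P_k$, so $P_k\subseteq W_\ell$ and therefore $r_\chi(w)$ is the identity of $\widehat S$ for every $w\in P_k$. Composing with $\hat\jmath$ and pairing against $\alpha^\vee$ (viewed, via $\hat\jmath$, as a character of $\widehat T$), the function $w\mapsto\langle\alpha^\vee,\hat\jmath(r_\chi(w))\rangle$ on $W_k$ is then identically $1$ on $P_k$, with no need to invoke any homomorphism property.

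Finally I would restrict to $W_\alpha=W_{k_\alpha}$, where this function is a genuine character. Since $S$ is tame the extension $k_\alpha/k$ is tame, so the wild inertia subgroup of $W_{k_\alpha}$ is again $P_k$; a character of $W_{k_\alpha}$ that kills $P_k$ is by definition tamely ramified. This gives the claim.

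I do not expect any real obstacle: the substantive work is contained in the base-change formula \Cref{thm32} and its corollary \Cref{thm35}, together with the routine check that minimal ramification supplies the finite-order tameness those results require. The only points to treat with care are the identifications $P_\ell=P_k$ and $P_{k_\alpha}=P_k$, both resting on tameness of the relevant extensions, and the bookkeeping that turns ``the $\widehat S$-valued cochain $r_\chi$ is trivial on $P_k$'' into ``the scalar character $\langle\alpha^\vee,\hat\jmath\circ r_\chi\rangle|_{W_\alpha}$ is trivial on wild inertia''.
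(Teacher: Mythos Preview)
Your proposal is correct and follows exactly the route the paper takes: the paper's proof is the single line ``This is an immediate corollary of \Cref{thm35},'' and you have simply unpacked that corollary, together with the observation (recorded just before \Cref{thm67}) that minimally ramified $\chi$-data meet the hypotheses of \Cref{thm35}. Your added bookkeeping about $P_\ell=P_k=P_{k_\alpha}$ is precisely what makes the word ``immediate'' honest.
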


\begin{proof}
This is an immediate corollary of \Cref{thm35}.
\end{proof}

\begin{comment} % old theorem 69
\begin{lemma} \label{old_thm69}
The character
$\langle\alpha^\vee,\hat\jmath\circ r_\chi\rangle|_{W_\alpha}$
of~$W_\alpha$ has depth at most zero,
and is trivial if all symmetric roots are unramified.
\end{lemma}

\begin{proof}
The first statement follows from the assumption
that $\chi$ is minimally ramified. %% this must be clarified!!

Let $\chi'\defeq\chi_{k_\alpha}$.
In \Cref{thm32} we saw that $r_\chi|_{k_\alpha}=r_{\chi'}$,
and in \Cref{thm35} we saw that $\chi'$
is minimally ramified.
The conclusion now follows by combining two observations.
First, since $\chi'$ is minimally ramified
the product defining $r_{\chi'}$, \Cref{eq37},
can be taken over $W_\alpha$-symmetric orbits only:
the asymmetric orbits contribute a trivial factor.
Second, since $\alpha$ is fixed by~$W_\alpha$
and the root-coroot pairing is Galois equivariant,
$\langle\alpha^\vee,\beta\rangle=0$
for any $W_\alpha$-symmetric root~$\beta$.
\end{proof}
\end{comment}

Since $p$ is odd (see \Cref{thm63})
the character~$d_\alpha$ is also tamely ramified,
as it takes values in~$\{\pm1\}$.
So $\langle\alpha^\vee,\bigl(\hat\jmath\circ\hat\theta\bigr)
\bigr\rangle|_{W_\alpha}$
differs from $\psi_\alpha$
by a tamely ramified character.
From this we can immediately deduce the following corollary.

\begin{corollary}
\label{thm72}
If $\depth(\theta_{k_\alpha/k}\circ\alpha^\vee)>0$
then $\depth(\theta_{k_\alpha/k}\circ\alpha^\vee)
= \depth(\psi_\alpha)$.
\end{corollary}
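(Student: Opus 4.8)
The plan is to read the statement off from the three-factor decomposition
\[
\psi_\alpha = d_\alpha\cdot\langle\alpha^\vee,\hat\jmath\circ r_\chi\rangle\cdot\langle\alpha^\vee,\hat\jmath\circ\hat\theta\rangle
\]
of characters of $W_\alpha$ recorded just before the statement, by showing that the first two factors are tamely ramified and that the third carries all of the positive depth.

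First I would pin down the depth of the last factor. By the base-change compatibility of the local Langlands correspondence for tori discussed in \Cref{sec:not:base}, applied to the cocharacter $\alpha^\vee$, the restriction $\langle\alpha^\vee,\hat\jmath\circ\hat\theta\rangle|_{W_\alpha}$ is precisely the $L$-parameter of the character $\theta_{k_\alpha/k}\circ\alpha^\vee$ of $k_\alpha^\times$. Since the local Langlands correspondence for the tame torus $\G_{\tn m}$ over $k_\alpha$ preserves depth, this factor has depth exactly $r\defeq\depth(\theta_{k_\alpha/k}\circ\alpha^\vee)$, which is positive by hypothesis.

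Next I would dispose of the other two factors. The factor $\langle\alpha^\vee,\hat\jmath\circ r_\chi\rangle|_{W_\alpha}$ is tamely ramified by \Cref{thm69}, hence of depth at most zero. The factor $d_\alpha$ takes values in $\{\pm1\}$, and since $p$ is odd (\Cref{thm63}) every continuous homomorphism from the pro-$p$ wild inertia subgroup to $\{\pm1\}$ is trivial, so $d_\alpha$ is tamely ramified as well. Thus $\psi_\alpha$ differs from $\langle\alpha^\vee,\hat\jmath\circ\hat\theta\rangle|_{W_\alpha}$ by a product of two characters each trivial on $W_\alpha^s$ for every real $s>0$; consequently the two characters agree on $W_\alpha^s$ for all $s>0$, and since the depth of a character is the infimum of those $s$ on which it becomes trivial, and that infimum is positive for $\langle\alpha^\vee,\hat\jmath\circ\hat\theta\rangle|_{W_\alpha}$, it follows that $\depth\psi_\alpha = r$. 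There is no real obstacle beyond this bookkeeping: the one nontrivial ingredient, the tameness of the $r_\chi$-factor, has already been secured in \Cref{thm69} via the base-change formula of \Cref{thm32} together with the minimal ramification of $\chi$.
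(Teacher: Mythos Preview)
Your argument is correct and is exactly the one the paper uses: the text immediately preceding the corollary records that $d_\alpha$ is tamely ramified (since $p$ is odd and $d_\alpha$ is $\{\pm1\}$-valued), cites \Cref{thm69} for the tameness of the $r_\chi$-factor, and concludes that $\psi_\alpha$ and $\langle\alpha^\vee,\hat\jmath\circ\hat\theta\rangle|_{W_\alpha}$ differ by a tamely ramified character, whence the corollary. You have simply written out in full the bookkeeping that the paper leaves implicit.
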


It remains to analyze the case where the depth of
$\theta_{k_\alpha/k}\circ\alpha^\vee$
is not positive.
We first assume that $S$ is maximally unramified in~$G$,
then remove this assumption.

\begin{lemma}
\label{thm71}
Suppose $S$ is maximally unramified.
If $\depth(\theta_{k_\alpha/k}\circ\alpha^\vee)\leq0$
then $\depth\psi_\alpha=0$.
\end{lemma}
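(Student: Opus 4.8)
The plan is to analyze the product decomposition
$\psi_\alpha = d_\alpha\cdot\langle\alpha^\vee,\hat\jmath\circ r_\chi\rangle\cdot\langle\alpha^\vee,\hat\jmath\circ\hat\theta\rangle$
of \Cref{sec:gal:root}, viewed as a product of characters of $W_\alpha$, and to establish two things: that $\psi_\alpha$ is tamely ramified, and that $\psi_\alpha$ is nontrivial. These combine to give $\depth_k\psi_\alpha=0$, since a tamely ramified character has depth $\le0$ and a nontrivial tamely ramified character has depth exactly~$0$. Tameness is immediate from the hypotheses: the third factor has the same depth as $\theta_{k_\alpha/k}\circ\alpha^\vee$, which by assumption is at most~$0$, so that factor is tamely ramified; the second is tamely ramified by \Cref{thm69}; and the first, being valued in $\{\pm1\}$, is tamely ramified since $p$ is odd.

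For nontriviality I would restrict to the inertia subgroup $I_{k_\alpha}$ of $W_\alpha$ and aim to show $\psi_\alpha|_{I_{k_\alpha}}\ne1$. The input on the third factor is positive: by the local Langlands correspondence for tori it restricts on $I_{k_\alpha}$ to the restriction of $\theta_{k_\alpha/k}\circ\alpha^\vee$ to the units of~$k_\alpha$, and regularity of the depth-zero supercuspidal underlying $(S,\theta)$ forces $\theta|_{S(k)_0}$ to reduce to a regular character of the associated finite torus, so that no coroot --- in particular $\alpha^\vee$ --- is trivial on it; hence this factor is ramified. It remains to prevent an exact cancellation against the two correction factors on $I_{k_\alpha}$, and this is where maximal unramifiedness of~$S$ is used: the plan is to exploit it to control the ramification of $d_\alpha$ and of $\langle\alpha^\vee,\hat\jmath\circ r_\chi\rangle$ on $I_{k_\alpha}$. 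For $r_\chi$ one base changes to~$k_\alpha$ via \Cref{thm32} and unwinds the formula \Cref{eq37}, using that $\alpha$ is fixed by~$W_\alpha$ and that the root--coroot pairing is $\Gamma\!_k$-equivariant, so that $W_\alpha$-symmetric orbits pair trivially with $\alpha^\vee$; for $d_\alpha$ one computes with the Tits lift, using maximal unramifiedness to see that $\omega_{S,G}$ acts trivially near~$\alpha$ on inertia. With these in hand, $\psi_\alpha|_{I_{k_\alpha}}$ agrees with $\langle\alpha^\vee,\hat\jmath\circ\hat\theta\rangle|_{I_{k_\alpha}}$, which is ramified, so $\psi_\alpha\ne1$ and in fact $\psi_\alpha$ itself is ramified.

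The main obstacle is that last step: isolating precisely which consequences of ``maximally unramified'' make the two correction factors harmless on $I_{k_\alpha}$, and carrying out the bookkeeping with the Tits lift and with base change --- the latter being delicate because base change need not preserve the ``minimally ramified'' condition on $\chi$-data, so one cannot simply quote \Cref{thm67} after restricting to $k_\alpha$. These are exactly the features that fail for a general elliptic maximal torus, which is why the hypothesis is imposed here and removed only afterward, where $\psi_\alpha$ must be controlled by different means.
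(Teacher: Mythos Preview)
Your approach is genuinely different from the paper's, and it has a real gap in the step you treat as easy.

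You assert that regularity of $\theta|_{S(k)_0}$ forces $\theta_{k_\alpha/k}\circ\alpha^\vee$ to be ramified, so that the third factor of $\psi_\alpha$ is ramified. But the regularity hypothesis in force (\Cref{thm70}) says only that the stabilizer of $\theta|_{S(k)_0}$ in $N(G^0,S)(k)/S(k)$ (or in $\Omega(G^0,S)(k)$, for extra regularity) is trivial. That is a condition on \emph{rational} Weyl elements; it does not say that $\theta$ is nontrivial along every coroot. The implication ``$\theta_{k_\alpha/k}\circ\alpha^\vee$ unramified $\Rightarrow$ $s_\alpha$ fixes $\theta|_{S(k)_0}$'' only yields a contradiction when $s_\alpha$ is $k$-rational, which it need not be. In fact the author flags exactly this point in \Cref{thm73}: proving $\depth(\theta_{k_\alpha/k}\circ\alpha^\vee)\geq 0$ from extra regularity would suffice, but ``I was unable to prove the stronger assertion and a weaker statement sufficed.'' So the step you regard as routine is the one the paper explicitly avoids, and your identification of ``the main obstacle'' as the two correction factors is misplaced.

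The paper's argument sidesteps this entirely. Rather than separating $\psi_\alpha$ into three pieces and showing the $\theta$-piece carries the ramification, it argues by contradiction on the \emph{output}: if $\psi_\alpha$ were unramified, then one can average root-group elements over the inertia orbit of $\alpha$ (with a correction when the orbit is not pairwise orthogonal) to produce a nontrivial unipotent element of $\widehat G$ centralized by $\varphi(I_k)$. But Kaletha's regularity of the parameter forces the connected centralizer of $\varphi(I_k)$ to be abelian, hence to contain no nontrivial unipotents. This uses regularity of the \emph{parameter} rather than regularity of the \emph{character}, and makes no claim about which of your three factors is responsible for the ramification of $\psi_\alpha$. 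Your factor-by-factor strategy --- essentially showing the correction characters $d_\alpha$ and $\langle\alpha^\vee,\hat\jmath\circ r_\chi\rangle$ are unramified under maximal unramifiedness --- is a natural first attempt, but it cannot conclude without the missing input on the $\theta$-factor.
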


\begin{proof}
It is clear from \Cref{thm69} that
$\depth\psi_\alpha\leq0$,
so we need only show that $\psi_\alpha$
is ramified.
Using the assumption that $\theta$ is extra regular,
Kaletha proved \cite[Proposition~5.2.7]{kaletha19a}
that the parameter $\varphi={}^Lj_\chi\circ{}^L\theta$
is \emph{regular} \cite[Definition~5.2.3]{kaletha19a},
meaning in particular that the connected centralizer
of the inertia subgroup~$I_k$ in~$\widehat G$ is abelian.
So although the full centralizer of inertia may not be abelian,
it does at least have the property
that all of its elements are semisimple.
Our proof proceeds by contradiction:
assuming that $\psi_\alpha$ is unramified,
we show that the centralizer of inertia
contains a nontrivial unipotent element
and is therefore nonabelian, a contradiction.

Since $\theta$ is regular (\Cref{thm70}),
the roots $\alpha$ with
$\depth(\theta_{k_\alpha/k}\circ\alpha^\vee)\leq0$
form a sub root system~$R_0$
of $R=R(\widehat T,\widehat G)$,
and the action of inertia on $R_0$
preserves a set~$R_0^+$ of positive roots.
Let $H \defeq \Ad(\varphi(I_k))$,
let $H\alpha$ denote the $H$-orbit of $\alpha\in R$,
and let $U_\alpha\subset\widehat G$ be
the root group for $\alpha\in R$.
Since $I_k\cap W_\alpha$ is
the inertia group of~$k_\alpha$
and $\bigoplus_{\beta\in H\alpha} \hat{\frak g}_\beta$
is a monomial representation of~$I_k$ induced from~$\psi_\alpha$,
the character~$\psi_\alpha$ is unramified
if and only if the following three groups coincide:
the stabilizer of~$U_\alpha$ in~$H$,
the centralizer of~$U_\alpha$ in~$H$,
and the centralizer of~$\alpha$ in~$H$.
Moreover, $\psi_\alpha$ is unramified
if and only if $\psi_\beta$ is unramified
for each $\beta\in H\alpha$.
Assume $\alpha\in R_0^+$ satisfies
these equivalent properties
and has maximal length among all such roots.
The proof works just as well if $\alpha\in R_0^-$,
so focus on the positive roots.

First, suppose the roots in the $H$-orbit
$H\alpha$ of~$\alpha$
are pairwise orthogonal.
Choose a nontrivial element $u_\alpha\in U_\alpha$.
For each $\beta\in H\alpha$,
choose $x\in H$ such that
$\beta=x\alpha$, and
let $u_\beta\defeq xu_\alpha$.
The element $u_\beta$ depends only
on $u_\alpha$ and~$\beta$ and not on~$x$.
Consider the product
\[
u = \prod_{\beta\in H\alpha} u_\beta.
\]
Then $u$ is invariant under~$H$,
hence centralizes inertia.
But at the same time $u$ is not semisimple
because the $H$-orbit of $\alpha$ consists
of positive roots, contradicting regularity.

In the remaining case, when the roots
in the $H$-orbit of $\alpha$ are not pairwise orthogonal,
a slight elaboration of the previous argument
yields a contradiction.
In this case the $H$-orbit of~$\alpha$
admits an involution $\beta\mapsto\bar\beta$
such that $\langle\beta,\gamma\rangle\neq 0$
(for $\beta,\gamma\in H\alpha$)
if and only if $\gamma\in\{\beta,\bar\beta\}$.
From each pair $\{\beta,\bar\beta\}$
with $\beta\in H\alpha$ choose one element,
including the element~$\alpha$,
and let $(H\alpha)_+$ be the resulting
set of orbit representatives,
so that $H\alpha=(H\alpha)_+\sqcup
\overline{(H\alpha)_+}$.
Choose a nontrivial element $u_\alpha\in U_\alpha$,
choose $x\in H$ such that $x\alpha=\bar\alpha$,
and define the element $u_{\bar\alpha}\defeq xu_\alpha$,
independent of the choice of~$x$.
The commutator subgroup
$U_{\alpha+\bar\alpha}=[U_\alpha,U_{\bar\alpha}]$
is stabilized by~$x$,
and since we assumed that $\alpha$ had maximal length
among the possible counterexamples to our theorem,
it is not centralized by~$x$.
(In fact, $x$ must act by inversion on this group
because $xu_{\bar\alpha} = u_\alpha$.)
Hence there is an element
$u_{\alpha+\bar\alpha}\in U_{\alpha+\bar\alpha}$
with
\[
u_{\alpha+\bar\alpha}^{-1}
\cdot xu_{\alpha+\bar\alpha} = [u_\alpha,u_{\bar\alpha}],
\qquad\tn{that is,}\qquad
u_{\alpha+\bar\alpha}u_\alpha u_{\bar\alpha}
= x(u_{\alpha+\bar\alpha}u_\alpha u_{\bar\alpha}).
\]
For each $\beta\in(H\alpha)_+$
choose $x\in H$ such that $\beta=x\alpha$,
let $u_\beta \defeq xu_\alpha$,
and let $u_{\bar\beta} \defeq xu_{\bar\alpha}$;
these elements are independent of the choice of~$x$.
As before, define the element
\[
u = \prod_{\beta\in(H\alpha)_+}
u_{\beta+\bar\beta}u_\beta u_{\bar\beta}.
\]
Since the action $\Ad\circ\varphi$
of wild inertia on $\widehat G^0$ is trivial,
the group $H$ acts on the factors of~$u$
through some abelian quotient.
Hence $u$ centralizes inertia but is not semisimple,
contradicting regularity.
\end{proof}

\begin{remark}
Kaletha defines an $L$-parameter to be \emph{torally wild}
if it takes wild inertia to a maximal torus of~$\widehat G$,
and shows that torally wild $L$-parameters
factor through the $L$-group of a tame maximal torus
\cite{kaletha19c}.
The proof of \Cref{thm71} shows that this 
larger class of parameters satisfies the conclusions
of the \lcnamecref{thm71}.
\end{remark}

\begin{lemma}
\label{thm31}
If $\depth(\theta_{k_\alpha/k}\circ\alpha^\vee)\leq0$
then $\depth\psi_\alpha=0$.
\end{lemma}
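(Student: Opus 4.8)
The plan is to deduce \Cref{thm31} from \Cref{thm71}, the point being to remove the hypothesis that~$S$ is maximally unramified. As the remark following \Cref{thm71} records, the proof of that \lcnamecref{thm71} uses the structure of~$S$ only through the fact that~$\varphi$ is \emph{torally wild}, meaning that the $\widehat G$-component of~$\varphi(w)$ lies in a fixed maximal torus of~$\widehat G$ for every~$w$ in the wild inertia subgroup~$P_k$. So it is enough to check that the regular supercuspidal parameter $\varphi=\varphi_{(S,\theta)}$ attached to an arbitrary tame elliptic extra regular pair is torally wild.

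For this I would simply restrict the formula
\[
\varphi(w)=\hat\jmath\bigl(\hat\theta(w)\,r_\chi(w)\bigr)\,n(\omega_{S,G}(w))\,w
\]
of \Cref{sec:llc:emb} to~$w\in P_k$. Since~$\chi$ is minimally ramified, \Cref{thm35} gives $r_\chi(w)=1$. Since~$S$ is tame and~$G$ splits over a tamely ramified extension, wild inertia acts trivially on~$\widehat S$ and on~$\widehat T$ through~$\widehat G$, so $\omega_{S,G}(w)=\tau_S(w)\tau_G(w)^{-1}=1$ and hence $n(\omega_{S,G}(w))=1$. Thus $\varphi(w)=\hat\jmath(\hat\theta(w))\,w$ with $\hat\jmath(\hat\theta(w))\in\widehat T$, which is exactly the torally wild condition.

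Granting this, the argument of \Cref{thm71} applies essentially verbatim. The one place it used the structure of~$S$ — the triviality of the adjoint action of~$P_k$ on the root lines~$\widehat{\frak g}_\beta$ with~$\beta\in\RS_0$ — now follows from torally wildness, because for such~$\beta$ the character $\langle\beta^\vee,\hat\jmath\circ\hat\theta\rangle|_{W_\beta}$ corresponds under the local Langlands correspondence for tori to $\theta_{k_\beta/k}\circ\beta^\vee$, which has nonpositive depth and so is trivial on~$P_k$; and the remaining ingredients — the sub root system~$\RS_0$ with its inertia-stable positivity from \Cref{thm70}, and the regularity of~$\varphi$, which by~\cite[Proposition~5.2.7]{kaletha19a} follows from extra regularity of~$\theta$ without any maximal-unramifiedness hypothesis — are available in general. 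One then derives the same contradiction: a hypothetically unramified~$\psi_\alpha$ would produce a nontrivial unipotent element centralizing~$\varphi(I_k)$, which is impossible since the connected centralizer of inertia in~$\widehat G$ is abelian.

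The main obstacle is to confirm that the unipotent-element construction in \Cref{thm71} — in both the pairwise-orthogonal and the non-orthogonal cases — really invokes the maximally-unramified hypothesis nowhere beyond the torally wild consequence isolated above; this is a matter of rereading that proof with care rather than of any new idea.
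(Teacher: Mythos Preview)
Your approach is correct but genuinely different from the paper's. The paper does not try to show that the proof of \Cref{thm71} goes through without the maximally-unramified hypothesis. Instead it \emph{factors} the $L$-embedding ${}^LS\to{}^LG$ through ${}^LG^0$, citing Kaletha's Lemmas~5.2.8 and~5.2.9 to produce an $L$-embedding ${}^Lj_{G^0,G}:{}^LG^0\to{}^LG$ under which $\widehat{\frak g}^0\hookrightarrow\widehat{\frak g}$ is equivariant. The composite parameter $W_k\to{}^LS\to{}^LG^0\to{}^LG$ then agrees with $\varphi$, and its restriction to $\widehat{\frak g}^0$ is the regular supercuspidal parameter for the pair $(S,\theta\cdot\theta_b^{-1})$ in $G^0$, where $\theta_b$ is a tamely ramified $\Omega(S,G^0)^{\Gamma_k}$-invariant correction. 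Since $S$ \emph{is} maximally unramified in~$G^0$ (this comes from the construction of regular supercuspidals, \Cref{sec:llc:reps}), one may now invoke \Cref{thm71} as stated. The remaining checks are that the modified character $\theta\cdot\theta_b^{-1}$ is still regular and that the depth condition on $\alpha$ survives, both of which follow from the tame ramification and Weyl-invariance of~$\theta_b$.

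What each route buys: your argument is shorter and avoids importing the factoring lemmas from \cite{kaletha19a}, at the cost of auditing the proof of \Cref{thm71} line by line to confirm that maximal unramifiedness is indeed vestigial (your audit is convincing; the one substantive use, triviality of $\Ad\circ\varphi(P_k)$ on the $R_0$-root groups, follows as you say from the depth-zero condition on $R_0$ together with torally wildness). The paper's argument is more modular---it uses \Cref{thm71} as a black box---and it makes the reduction to $G^0$ explicit, which is structurally natural since $G^0$ is already present in the Howe factorization.
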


\begin{proof}
Recall from \Cref{sec:llc:pairs}
that there is a twisted Levi subgroup~$G^0$
of~$G$ such that $\alpha\in R(S,G^0)$ if and only if
$\depth(\theta_{k_\alpha/k}\circ\alpha^\vee)\leq0$,
and that $S$ is maximally unramified in~$G$.
\Cref{thm71} handles the case where $G=G^0$,
so we assume that $G^0\subsetneq G$.

To deal with this more general
case we factor the $L$-embedding ${}^LS\to{}^LG$
through ${}^LG^0$.
Kaletha showed \cite[Lemmas 5.2.9, 5.2.8]{kaletha19a}
that there is an $L$-embedding
${}^Lj_{G^0,G}:{}^LG^0\to{}^LG$
with the following property:
the composite parameter
$W_k\to{}^LS\to{}^LG$
is given by the formula
\[
w\mapsto
\hat\jmath\bigl(\hat\theta_b(w)\hat\theta(w)r_\chi(w)\bigr)
n(\omega_{S,G}(w))w
\]
where $\theta_b:S(k)\to\C^\times$
is tamely ramified and
$\Omega(S,G^0)^{\Gamma_k}$-invariant.
Furthermore, from the construction of
${}^Lj_{G^0,G}$ it is easy to see that
the embedding $\widehat{\frak g}^0\into\widehat{\frak g}$
is ${}^Lj_{G^0,G}$-equivariant.
In this way we reduce to the previous case of $G=G^0$
but with $\theta$ replaced by $\theta\cdot\theta_b^{-1}$.
This replacement does not affect the validity of the reduction:
since $\theta_b$ is $\Omega(S,G^0)^{\Gamma_k}$-invariant
the character $\theta'=\theta\cdot\theta_b^{-1}$
is still regular \cite[Fact 3.7.6]{kaletha19a},
and since $\theta_b$ is tamely ramified, we still have that
$\depth(\theta'_{k_\alpha/k}\circ\alpha^\vee)\leq0$.
\end{proof}

\begin{comment} % here for the memories, part of proof
First, assume that $G=G^0$.
Let $\ell\supseteq k_\alpha$ be
an unramified extension such that
$S_\ell$ is a minimal Levi subgroup of~$G^0_\ell$.
Such an extension exists because $S$
is a maximally unramified subtorus of~$G^0$.
The sign~$d_\alpha$ is conjugation by $n(\omega_{S,G}(w))w$.
Since $S$ is maximally unramified in~$G^0$,
the Galois action on~$X^*(S_\ell)$
is the same as the pinned Galois action
on~$X^*(S_\ell)$ relative to~$G^0_\ell$.
So if $w\in W_\ell$ then $\omega_{S,G}(w)=1$
and, by the definition of the Tits lift,
$n(\omega_{S,G}(w))=1$,
showing that $d_\alpha(w) = 1$.
So $d_\alpha$ is unramified.
Since $S$ is maximally unramified in~$G^0$
all its symmetric roots are unramified,
and using \Cref{thm69} we conclude that
$\psi_\alpha$ has depth zero.
\end{comment}

\begin{remark}
\label{thm73}
Unlike \Cref{thm72},
\Cref{thm31} does not assert
that $\psi_\alpha$ and $\theta_{k_\alpha/k}\circ\alpha^\vee$
have the same depth when the latter has depth zero.
I expect this stronger assertion to be true.
It would be enough to prove that
if $\theta$ is extra regular
then $\depth(\theta_{k_\alpha/k}\circ\alpha^\vee)\geq0$.
But I was unable to prove the stronger assertion
and a weaker statement sufficed.
\end{remark}

In summary, the root summand decomposes
as a direct sum
\[
V_{\tn{root}} = \bigoplus_{\ul\alpha\in\ul\RS(G,S)} V_{\ul\alpha},
\]
where, for any $\alpha\in\ul\alpha(\bar k)$,
the representation~$V_{\ul\alpha}$
is induced from a character~$\psi_\alpha$ of~$W_\alpha$
with known depth.
We can now easily compute the $\gamma$-factor.
Recall the Galois sets $\RS_i$
and depths~$r_i\geq0$ of \Cref{sec:llc:pairs}.

\begin{lemma}
\label{thm47}
$\displaystyle
|\gamma(0,V_{\tn{root}})|
= \exp_q\Bigl(\tfrac12|\RS|
+ \tfrac12\sum_{i=0}^{d-1} r_i(|\RS_{i+1}|-|\RS_i|)\Bigr)$.
\end{lemma}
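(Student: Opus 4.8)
The plan is to run an argument parallel to the toral computation of \Cref{thm46}, but summand by summand. Using the decomposition $V_{\tn{root}} = \bigoplus_{\ul\alpha\in\ul\RS(G,S)} V_{\ul\alpha}$ and the additivity of the $L$-factor, $\varepsilon$-factor, and hence $\gamma$-factor recorded at the end of \Cref{sec:gal:fac}, it suffices to compute $|\gamma(0,V_{\ul\alpha})|$ for each Galois orbit $\ul\alpha$, where (after choosing $\alpha\in\ul\alpha(\bar k)$) we have $V_{\ul\alpha} = \Ind_{k_\alpha/k}\psi_\alpha$ for the character $\psi_\alpha$ of $W_\alpha$ identified in this subsection. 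Since $S$ is tame, each $k_\alpha/k$ is finite and tame.

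The first step is to dispose of the $L$-factors. By inductivity of the $L$-factor, $L(s,V_{\ul\alpha}) = L(s,\psi_\alpha)$ and $L(s,V_{\ul\alpha}^\vee) = L(s,\psi_\alpha^{-1})$. Each $\psi_\alpha$ is \emph{ramified}: it has positive depth when $\depth(\theta_{k_\alpha/k}\circ\alpha^\vee)>0$ by \Cref{thm72}, and it is ramified of depth zero otherwise by \Cref{thm31} (whose proof, through \Cref{thm71}, establishes ramification, not merely vanishing of the depth). A ramified character of $W_\alpha$ has no $I_{k_\alpha}$-fixed vectors, so $L(s,\psi_\alpha) = L(s,\psi_\alpha^{-1}) = 1$; hence $L(1,V_{\ul\alpha}^\vee) = L(0,V_{\ul\alpha}) = 1$, and the $L$-factor ratio in $\gamma(0,V_{\ul\alpha})$ is trivial.

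It therefore remains to compute $|\varepsilon(V_{\ul\alpha})|$, and by \Cref{eq52} this equals $\exp_q(\tfrac12\cond V_{\ul\alpha})$. Because $k_\alpha/k$ is tame and $\psi_\alpha$ is a character, \Cref{thm12} gives $\cond V_{\ul\alpha} = [k_\alpha:k](1+\depth_k\psi_\alpha)$, so I only need to read off $\depth_k\psi_\alpha$ from the root filtration of \Cref{sec:llc:pairs}. Combining \Cref{thm72} with \Cref{thm38}, for $\alpha\in\RS_i\setminus\RS_{i-1}$ with $1\le i\le d$ one has $\depth_k\psi_\alpha = \depth(\theta_{k_\alpha/k}\circ\alpha^\vee) = r_{i-1}$, while for $\alpha\in\RS_0$ one has $\depth_k\psi_\alpha = 0 = r_{-1}$ by \Cref{thm31}; here I use the conventions $r_{-1}=0$ and $\RS_{-1}=\emptyset$, so that $\RS = \bigsqcup_{i=0}^d (\RS_i\setminus\RS_{i-1})$ and $\RS_d = \RS$.

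Assembling the pieces, $|\gamma(0,V_{\tn{root}})| = \exp_q\bigl(\tfrac12\sum_{\ul\alpha\in\ul\RS}[k_\alpha:k](1+\depth_k\psi_\alpha)\bigr)$. The constant part of the exponent is $\tfrac12\sum_{\ul\alpha}[k_\alpha:k] = \tfrac12|\RS|$, since $[k_\alpha:k]$ is the cardinality of the Galois orbit $\ul\alpha$ and these partition $\RS$. Since $\depth_k\psi_\alpha$ depends only on the orbit, the remaining part is $\tfrac12\sum_{\alpha\in\RS}\depth_k\psi_\alpha = \tfrac12\sum_{i=0}^d r_{i-1}(|\RS_i|-|\RS_{i-1}|) = \tfrac12\sum_{i=0}^{d-1} r_i(|\RS_{i+1}|-|\RS_i|)$, the $i=0$ term vanishing because $r_{-1}=0$; this is the claimed formula. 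At this point essentially everything is bookkeeping with conductors and Galois-orbit sizes, so the proof is short. The one substantive input, which has already been arranged, is the ramification of $\psi_\alpha$ in the case $\depth(\theta_{k_\alpha/k}\circ\alpha^\vee)\le 0$ -- exactly the content of \Cref{thm31} and its proof -- without which spurious $L$-factors would appear in the answer.
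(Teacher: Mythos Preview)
Your proof is correct and follows essentially the same route as the paper's own argument: decompose $V_{\tn{root}}$ into the monomial pieces $V_{\ul\alpha}$, use inductivity and the ramification of~$\psi_\alpha$ (via \Cref{thm72,thm31}) to kill the $L$-factors, apply \Cref{thm12} to compute each conductor as $[k_\alpha:k](1+\depth_k\psi_\alpha)=|\ul\alpha(\bar k)|(1+r_{i-1})$ using \Cref{thm38}, and sum. Your version is slightly more explicit about the bookkeeping conventions ($r_{-1}=0$, $\RS_{-1}=\emptyset$) and about why depth zero in \Cref{thm31} genuinely means ramified, but the substance is identical.
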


\begin{proof}
Suppose that $\alpha\in R_{i+1}$,
for $0\leq i\leq d-1$.
\Cref{thm38,thm72,thm31} show that
$\depth_k\psi_\alpha=r_i$.
Since $L$-factors are inductive
$L(s,V_{\ul\alpha}) = L(s,\psi_\alpha)$,
and since $\psi_\alpha$ is ramified
its $L$-factor is trivial.
As for the absolute value of the $\varepsilon$-factor,
since the extension $k_\alpha\supseteq k$ is tame,
\cref{thm12} shows that
$\cond V_{\ul\alpha} = (1 + r_i)|\ul\alpha(\bar k)|$,
so that
\[
\varepsilon(V_{\ul\alpha})
= \exp_q\bigl(\tfrac12(1 + r_i)|\ul\alpha(\bar k)|\bigr).
\]
Summing over $\ul\alpha\in\ul\RS(G,S)$ finishes the proof.
\end{proof}

\subsection{Summary}
\label{sec:gal:summary}
Let $A$ be the maximal split central subtorus of~$G$,
let $G^{\tn a}\defeq G/A$,
let $S^{\tn a}\defeq S/A$,
and let $M\defeq X^*(S^{\tn a})^{I_k}$.
\Cref{thm46,thm47} show that
the absolute value of the adjoint $\gamma$-factor is
\[
|\gamma(0,V)|
= \frac{|M_{\Frob}|}{|(\overline\kappa^\times\otimes M^\vee)^\Frob|}
\exp_q\Bigl(\tfrac12\dim G^{\tn a} + \tfrac12\dim M
+ \tfrac12\sum_{i=0}^{d-1} r_i(|\RS_{i+1}|-|\RS_i|)\Bigr).
\]
Finally, since
$|\pi_0(S_\varphi^\natural)| = |X_*(\overline S)_{\Gamma\!_k}|$
\cite[Lemma~5.13]{kaletha15},
the Galois side of the formal degree conjecture is
\begin{equation}
\label{eq13}
\frac{
|M_\Frob|
}{
|X_*(S^{\tn a})_{\Gamma\!_k}|\cdot
|(\overline\kappa^\times\otimes M^\vee)^\Frob|
}
\exp_q\Bigl(\tfrac12\dim G^{\tn a} + \tfrac12\dim M
+ \tfrac12\sum_{i=0}^{d-1} r_i(|\RS_{i+1}|-|\RS_i|)\Bigr).
\end{equation}

\section{Comparison}
\label{sec:com}
In this short final section
we combine our work from
\Cref{sec:aut,sec:gal} with
several results from the literature
to show that the automorphic and Galois sides
of the formal degree conjecture are equal,
the following theorem.

\setcounter{theoremx}{1}
\begin{theoremx}
\label{thm50}
Kaletha's regular $L$-packets
satisfy the formal degree conjecture, \Cref{thm1}.
\end{theoremx}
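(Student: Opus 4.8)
The plan is to match the automorphic formula of \Cref{thm45}, refined for regular data in \Cref{eq10}, against the Galois-side formula \Cref{eq13}, for each regular supercuspidal parameter~$\varphi$ and each member of its $L$-packet. Write $\varphi = \varphi_{(S,\theta)}$ for a tame elliptic extra regular pair $(S,\theta)$ as in \Cref{sec:llc:params}, so that $\Pi_\varphi(G) = \{\pi_{(jS,j\theta')}\}$ with $j$ running over the $G(k)$-conjugacy classes of admissible embeddings of~$S$. Since $\varphi$ is regular the groups $S_\varphi$ and $S_\varphi^\natural$ are abelian, so the enhancement $\rho$ is one-dimensional and the Galois side of \Cref{thm1} is $|\gamma(0,\varphi,\tn{Ad},\psi)|/|\pi_0(S_\varphi^\natural)|$, depending only on~$\varphi$. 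On the automorphic side \Cref{eq10} expresses $\deg(\pi_{(jS,j\theta')},\mu)$ in terms of the $k$-torus $jS$, the depths $r_i$, and the Galois set $\ul\RS(G,jS)$; because all admissible embeddings are stably conjugate, both the $k$-isomorphism class of $jS$ and the Galois action on $\ul\RS(G,jS)$ are independent of~$j$. Hence every member of $\Pi_\varphi(G)$ has the same formal degree, and the conjecture for the whole packet reduces to a single numerical identity: the right side of \Cref{eq10} equals the right side of \Cref{eq13}. (Both sides transform compatibly under change of the additive character, so it suffices to work with the level-zero character used in \Cref{thm45}.)

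First I would match the exponential factors. Each carries $\tfrac12\dim G^{\tn a}$ together with $\tfrac12\sum_i r_i(|\RS_{i+1}|-|\RS_i|)$; the two remaining contributions are $\tfrac12\rank \ul G^{\tn a,0}_{y,0:0+}$ on the automorphic side and $\tfrac12\dim M$, with $M \defeq X^*(S^{\tn a})^{I_k}$, on the Galois side. These agree because $S$ is maximally unramified in~$G^0$ (\Cref{sec:llc:reps}): the image of $S^{\tn a}$ in the reductive quotient $\ul G^{\tn a,0}_{y,0:0+}$ of the special fibre is a maximal torus, and its character module is exactly $M$ with the evident Frobenius action, so $\rank \ul G^{\tn a,0}_{y,0:0+} = \dim M$.

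Next I would match the prefactors, that is, identify $|S^{\tn a}_{0:0+}|^{-1}$ with $|M_\Frob|\cdot|X_*(S^{\tn a})_{\Gamma\!_k}|^{-1}\cdot|(\overline\kappa^\times\otimes M^\vee)^\Frob|^{-1}$; here I would use Kaletha's evaluation $|\pi_0(S_\varphi^\natural)| = |X_*(S^{\tn a})_{\Gamma\!_k}|$ already invoked in \Cref{eq13}. The factor $(\overline\kappa^\times\otimes M^\vee)^\Frob$ is the group of $\kappa$-points of the $\kappa$-torus with character module~$M$, which is precisely the reductive quotient of the special fibre of the connected N\'eron model of $S^{\tn a}$; so it has the same order as $S^{\tn a}_{0:0+}$, and the required identity collapses to a statement about the finite abelian groups $X_*(S^{\tn a})_{\Gamma\!_k}$ and $M_\Frob$ associated with the Galois lattice $X^*(S^{\tn a})$, to be settled by N\'eron-model bookkeeping and duality for Galois lattices.

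I expect this prefactor matching to be the main obstacle, as it is the one place where the automorphic volume computation behind \Cref{eq10} (via \Cref{thm45} and the Deligne-Lusztig dimension of~$\eta$), the $L$-factor of the toral summand (\Cref{thm46}), and Kaletha's description of $\pi_0(S_\varphi^\natural)$ all have to be reconciled through the arithmetic of~$S^{\tn a}$. Everything else either is proved in the body of the paper or follows formally from the additivity of $L$-, $\varepsilon$-, and $\gamma$-factors (\Cref{sec:gal:fac}), the decomposition $V = V_{\tn{toral}}\oplus V_{\tn{root}}$ of the adjoint representation (\Cref{sec:gal:adj}), and the evaluations of the two summands' $\gamma$-factors in \Cref{thm46,thm47}. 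Once the prefactor identity is established, \Cref{eq10} equals \Cref{eq13} for every $\pi\in\Pi_\varphi(G)$; letting $\varphi$ range over all regular supercuspidal parameters then gives \Cref{thm50}.
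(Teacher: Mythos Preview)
Your overall strategy is exactly the paper's: reduce to comparing \eqref{eq10} with \eqref{eq13}, match the $\exp_q$ terms via $\dim M=\rank\ul G^{\tn a,0}_{y,0:0+}$, and then match the rational prefactors. Two points need tightening.

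First, the constancy of the formal degree across the packet is not only a matter of the $k$-isomorphism class of $jS$ and the Galois set $\ul\RS(G,jS)$: the depths $r_i$ in \eqref{eq10} are read off from the character $j\theta'$, not from $\theta\circ j^{-1}$. The paper closes this by observing that $j\theta'$ and $\theta\circ j^{-1}$ differ by a tamely ramified character (the $\varepsilon$ of \Cref{sec:llc:params}), so the positive breaks $r_0,\dots,r_{d-1}$ and the subgroup $G^0$ are unchanged. You should say this explicitly.

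Second, your prefactor reduction contains a concrete slip. You identify $(\overline\kappa^\times\otimes M^\vee)^\Frob$ with $S^{\tn a}_{0:0+}$ and thereby collapse the comparison to $|M_\Frob|=|X_*(S^{\tn a})_{\Gamma\!_k}|$. That identity is false already for the norm-one torus of a ramified quadratic extension, where $M=0$ but $X_*(S^{\tn a})_{\Gamma\!_k}\cong\Z/2$. The missing ingredient is the component group of the N\'eron model: the automorphic prefactor is really $[S^{\tn a}(k):S^{\tn a}(k)_{0+}]^{-1}$, and $[S^{\tn a}(k):S^{\tn a}(k)_0]=|X_*(S^{\tn a})_{I_k}^{\Frob}|$ via the Kottwitz homomorphism. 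The paper handles this by quoting two lemmas of Kaletha: one (\cite[Lemma~5.17]{kaletha15}) giving $[S^{\tn a}(k):S^{\tn a}(k)_{0+}]=|X_*(S^{\tn a})_{I_k}^{\Frob}|\cdot|(\overline\kappa^\times\otimes M^\vee)^{\Frob}|$, and another (\cite[Lemma~5.18]{kaletha15}) showing that the resulting ratio
\[
\frac{|X_*(S^{\tn a})^{I_k}_{\Frob}|\cdot|X_*(S^{\tn a})_{I_k}^{\Frob}|}{|X_*(S^{\tn a})_{\Gamma\!_k}|}
\]
equals $1$. Once you insert the component-group factor, your ``N\'eron-model bookkeeping'' lands on exactly this identity, and the proof goes through as in the paper.
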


Let $(S,\theta)$ be a tame elliptic regular pair
and let $\varphi=\varphi_{(S,\theta)}$
be the $L$-parameter attached to this pair
by the constructions of \Cref{sec:llc:params}.
The Galois side of the formal degree conjecture
for~$\varphi$ is expressed in \Cref{eq13}.
Recall the notation of \Cref{sec:gal:summary}.

The supercuspidal representations
in the $L$-packet for $\varphi$ are of the form
$\pi_{(jS,j\theta')}$ as described in
\Cref{sec:llc:params},
where $j$ ranges over conjugacy classes
of admissible embeddings $j:S\into G$.
Since $j\theta'$ and $\theta\circ j^{-1}$
differ by a tamely ramified character,
the formal degrees of $\pi_{(S,\theta)}$
and $\pi_{(jS,j\theta')}$,
as expressed in \Cref{eq10}, agree.
So on the automorphic side,
we can assume for the purpose
of computing the formal degree
that the relevant pair is $(S,\theta)$,
even though it is actually $(jS,j\theta')$.

To compute the dimension of 
the lattice $M\defeq X^*(S^{\tn a})^{I_k}$
from \Cref{sec:gal:summary},
we prove an analogue for tori
of the N\'eron-Ogg-Shafarevich criterion
for abelian varieties.

\begin{lemma}
\label{thm60}
Let $k$ be a Henselian, discretely-valued field
with residue field~$\kappa$
and let $T$ be a tame $k$-torus.
Then there is a canonical identification
$X^*(T)^{I_k} = X^*(T_{0:0+})$.
\end{lemma}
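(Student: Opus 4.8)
The plan is to prove this as a torus analogue of the N\'eron-Ogg-Shafarevich criterion, obtaining the identification by interposing the character lattice of the integral model of~$T$ underlying the Moy-Prasad filtration. Write $\ul T_0$ for the smooth $\cal O$-model of~$T$ with $\ul T_0(\cal O)=T_0$, which for a torus is the connected N\'eron model; then $T_{0:0+}$ is the maximal torus quotient of its special fibre, and the formation of~$\ul T_0$ is compatible with unramified base change.

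First I would reduce to the case where the residue field is separably closed. Both $X^*(T)^{I_k}$ and $X^*(T_{0:0+})$ carry natural $\Gamma\!_k/I_k$-actions, which any isomorphism constructed functorially in~$T$ respects, so it is enough to produce such an isomorphism after replacing $k$ by its maximal unramified extension~$k'$. Over~$k'$ the absolute Galois group is~$I_k$, so a character or cocharacter of~$T$ is defined over~$k'$ exactly when it is $I_k$-invariant. Over this strictly Henselian base I would identify both sides with the group $X^*(\ul T_0)$ of homomorphisms $\ul T_0\to\G_{\tn m}$ over~$\cal O_{k'}$. Restriction to the generic fibre identifies $X^*(\ul T_0)$ with $X^*(T)^{I_k}$: an $I_k$-invariant character of~$T$ is defined over~$k'$, hence extends to~$\ul T_0$ by the N\'eron mapping property (the source being connected with connected special fibre, the extension lands in~$\ul T_0$), while a character of~$\ul T_0$ is a fortiori generically defined over~$k'$, and injectivity is $\cal O_{k'}$-flatness and separatedness. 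For the other identification, the special fibre of~$\ul T_0$ is a commutative smooth connected $\overline\kappa$-group with unique maximal torus~$T_{0:0+}$; by rigidity of tori over the Henselian base this subtorus lifts to a split $\cal O_{k'}$-subtorus $\cal S\subseteq\ul T_0$, and one checks that the composite ``restrict to~$\cal S$, then specialise'' is a bijection $X^*(\ul T_0)\to X^*(T_{0:0+})$. Composing the two identifications gives the \lcnamecref{thm60}.

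I expect the second identification -- controlling specialisation of characters, equivalently showing that $X^*(\ul T_0)$ surjects onto $X^*(T_{0:0+})$ -- to be the main obstacle; the inputs are lifting of tori, the surjectivity of a subtorus inclusion on character lattices, and the vanishing of a cohomology group ($\tn H^1$ of a residually induced or pro-$p$ module) that holds precisely because $T$ is tame and governs the failure of $I_k$-invariants to be right exact. I would also run the parallel argument with cocharacters -- extending cocharacters through~$\ul T_0$ by the N\'eron property and specialising them into~$T_{0:0+}$ -- which gives $X_*(T)^{I_k}\simeq X_*(T_{0:0+})$ with no lifting subtlety, and is what \cref{sec:gal:summary} actually uses to read off $\dim M$. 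As an alternative to the whole argument one can descend directly along a tame Galois splitting field~$\ell$ of~$T$, over which the statement is tautological, the content reducing to the same $\tn H^1$-vanishing -- which is again exactly the tameness hypothesis.
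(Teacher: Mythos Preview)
Your proof is correct and follows essentially the same route as the paper: reduce to the strictly Henselian case by \'etale (unramified) descent, then lift the maximal torus of the special fibre of the integral model using smoothness of the scheme of maximal tori and Hensel's lemma. The only difference is bookkeeping: the paper interposes the maximal split subtorus~$T^{\tn s}$ and shows $T^{\tn s}_{0:0+}\to T_{0:0+}$ is an isomorphism, whereas you interpose $X^*(\ul T_0)$; both rest on the same lifting step. Your account is in fact more explicit than the paper's about where tameness enters (the $\tn H^1$-vanishing / N\'eron mapping property), and your suggested alternative via direct descent along a tame splitting field is also a valid and arguably cleaner route.
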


\begin{proof}
Since $X^*(T)^{I_k} = X^*(T)_{k^\tn{nr}}$
where $k^\tn{nr}$ is the maximal unramified
extension of~$k$,
we can use \'etale descent for
the Moy-Prasad filtration \cite[9.1]{yu02}
to reduce the proof to the case
where $\kappa$ is separably closed.
Let $T^{\tn s}\subseteq T$ be the maximal split subtorus,
so that $X^*(T)^{I_k} = X^*(T^{\tn s})
= X^*(T^{\tn s}_{0:0+})$
since now $I_k=\Gamma\!_k$.

It suffices to prove that
the canonical inclusion
$T^{\tn s}_{0:0+}\into T_{0:0+}$
is an isomorphism.
The proof rests on two facts from SGA~3.
Since $S_0$ is smooth and affine,
the moduli space of its maximal tori
is represented by a smooth $\cal O$-scheme 
\cite[Expos\'e~XII, Corollaire~1.10]{sga3new}.
By Hensel's Lemma
\cite[Expos\'e~XI, Corollaire~1.11]{sga3new},
every $\kappa$-point of this moduli space
lifts to a $\cal O$-point.
\end{proof}

At this point, we know that the $\exp_q$ factors
in \cref{eq10} and \cref{eq13} are equal.

\begin{lemma}[{\cite[Lemma~5.17]{kaletha15}}]
$[S^{\tn a}(k):S^{\tn a}(k)_{0+}]
= |X_*(S^{\tn a})_{I_k}^\Frob|\cdot
|(\overline\kappa^\times\otimes M^\vee)^\Frob|$.
\end{lemma}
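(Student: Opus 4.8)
The plan is to prove the identity
\[
[S^{\tn a}(k):S^{\tn a}(k)_{0+}]
= |X_*(S^{\tn a})_{I_k}^\Frob|\cdot
|(\overline\kappa^\times\otimes M^\vee)^\Frob|
\]
by comparing the two sides through the Moy-Prasad depth-zero quotient of the torus $S^{\tn a}$. First I would observe that, since $S^{\tn a}(k)_{0+}$ is the pro-$p$ radical of the parahoric $S^{\tn a}(k)_0$, the index splits as
\[
[S^{\tn a}(k):S^{\tn a}(k)_{0+}]
= [S^{\tn a}(k):S^{\tn a}(k)_0]\cdot|S^{\tn a}(k)_{0:0+}|,
\]
where $S^{\tn a}(k)_{0:0+} = S^{\tn a}_{0:0+}(\kappa)$ is the group of $\kappa$-points of the special fiber of the connected N\'eron-type model. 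The first factor is the group of components of the N\'eron model, which by the standard theory of tori over local fields is $X_*(S^{\tn a})_{I_k}^\Frob$ (the Frobenius-invariants of the inertia-coinvariants of the cocharacter lattice). So the claim reduces to the assertion that
\[
|S^{\tn a}_{0:0+}(\kappa)| = |(\overline\kappa^\times\otimes M^\vee)^\Frob|,
\]
where $M = X^*(S^{\tn a})^{I_k}$.

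Next I would identify $\ul S^{\tn a}_{0:0+}$ with a torus over $\kappa$. By \Cref{thm60}, $M = X^*(S^{\tn a})^{I_k} = X^*(\ul S^{\tn a}_{0:0+})$, so $\ul S^{\tn a}_{0:0+}$ is the $\kappa$-torus with (geometric) character lattice $M$ and with $\Frob$ acting through the induced action on $M$. Its cocharacter lattice is the dual $M^\vee$, again with the dual $\Frob$-action. For any $\kappa$-torus $\ul U$ with cocharacter lattice $X_*(\ul U)$ one has the standard identification $\ul U(\overline\kappa) = \overline\kappa^\times\otimes X_*(\ul U)$ as $\Frob$-modules, hence $\ul U(\kappa) = (\overline\kappa^\times\otimes X_*(\ul U))^\Frob$. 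Applying this with $\ul U = \ul S^{\tn a}_{0:0+}$ gives exactly $S^{\tn a}_{0:0+}(\kappa) = (\overline\kappa^\times\otimes M^\vee)^\Frob$, as required. Assembling the two factors completes the proof.

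The main obstacle, and the step deserving the most care, is the identification of the component group $[S^{\tn a}(k):S^{\tn a}(k)_0]$ with $X_*(S^{\tn a})_{I_k}^\Frob$: one must pin down which parahoric and which integral model are in play, and invoke the correct description of $\pi_0$ of the N\'eron model of a (tamely ramified) torus. This is where the tameness hypothesis on $S$ enters, guaranteeing that the relevant Galois cohomology vanishes and that the formula for $\pi_0$ takes the clean shape above; once this is in hand, the remaining steps are formal manipulations with $\kappa$-tori and their points. An alternative, if one prefers to avoid N\'eron models, is to argue directly on the level of the Moy-Prasad filtration on $S^{\tn a}(k)$, using that $S^{\tn a}(k)/S^{\tn a}(k)_0 \cong X_*(S^{\tn a})_{I_k}^\Frob$ via the Kottwitz homomorphism; this is the route I would take, citing the Kottwitz homomorphism for tori rather than re-deriving it.
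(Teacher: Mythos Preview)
The paper does not actually prove this lemma: it is stated with a citation to \cite[Lemma~5.17]{kaletha15} and no argument is given. So there is no ``paper's own proof'' to compare against; the result is simply imported.

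Your argument is sound and is essentially the standard one. The decomposition
\[
[S^{\tn a}(k):S^{\tn a}(k)_{0+}]
= [S^{\tn a}(k):S^{\tn a}(k)_0]\cdot|S^{\tn a}(k)_{0:0+}|
\]
is correct, the identification of the first factor with $X_*(S^{\tn a})_{I_k}^\Frob$ via the Kottwitz homomorphism for tori is exactly the right tool (and tameness ensures surjectivity onto this target with kernel the parahoric), and the identification of the second factor follows cleanly once you know, via \Cref{thm60}, that $\ul S^{\tn a}_{0:0+}$ is the $\kappa$-torus with character lattice~$M$. One small point worth making explicit: since $S^{\tn a}$ is anisotropic (being $S/A$ with $S$ elliptic), the group $S^{\tn a}(k)$ is compact and the Kottwitz quotient is automatically finite, so the index you are computing is honest. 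With that noted, your proof is complete.
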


Let's now compare the remaining factors
outside of $\exp_q$.
On the automorphic side we have
$[S^{\tn a}(k):S^{\tn a}(k)_{0+}]^{-1}$;
on the Galois side we have
\[
\frac{|M_\Frob|}%
{|X_*(S^{\tn a})_{\Gamma\!_k}|\cdot
|(\overline\kappa^\times\otimes M^\vee)^\Frob|}.
\]
The ratio of one to the other is
\[
\frac{|X_*(S^{\tn a})^{I_k}_\Frob|\cdot|X_*(S^{\tn a})^\Frob_{I_k}|}%
{|X_*(S^{\tn a})_{\Gamma\!_k}|},
\]
using that $M=X_*(S^{\tn a})^{I_k}$.
This ratio equals~$1$ \cite[Lemma~5.18]{kaletha15}.

\section*{Acknowledgments}
I would like to thank my advisor Tasho Kaletha
for proposing the problem of calculating the formal degree,
sharing his wealth of knowledge in the Langlands program,
and encouraging me when I felt hopelessly stuck.
I'm also grateful to
Atsushi Ichino for notifying me of
other cases where the formal degree conjecture
has been proved,
to Stephen DeBacker
for explaining how to compute
the apartment of an elliptic maximal torus,
and
to Karol Koziol for discussing the problem
and encouraging me to work
toward the proof of \cref{thm31}.

This research was supported
by the National Science Foundation
RTG grant DMS~1840234.

\bibliography{fdrs.bib}
\bibliographystyle{amsalpha}

\end{document}